\newcommand{\beweis}{\ifthenelse{\boolean{skript}} {
\begin{proof}[Beweis]
\mbox{}\vfill \mbox{}
\end{proof}
\pagebreak }{} }
\providecommand{\phinneuu}{\frac{1}{\check{{\phi}}_n(u) } }
\providecommand{\phinu}{\frac{1}{\phi(u)}}
\providecommand{\xfmk}{x_{f_{m,k}}}
\providecommand{\thetaneu}{\widehat{{\theta}}}
\providecommand{\phinneuu}{\frac{1}{{\check{\phi}}_n(u) } }
\providecommand{\Fourier}{\mathcal{F}}  
\providecommand{\phinu}{\frac{1}{\phi(u)} }
\providecommand{\kerndiffmk}{\Fourier \kf \left( u/k\right) - \Fourier \kf \left( u/m \right) }
\providecommand{\Kerndifmk}{ \Fourier \kf \left( u/k \right) - \Fourier \kf \left( u/m \right) }
\renewcommand{\hat}{\widehat}
\renewcommand{\tilde}{\widetilde}
\providecommand{\neu}{\hspace*{-0,08cm}\scaletoheight{0,07cm}{N}}
\providecommand{\neukap}{\hspace*{-0,08cm}\scaletoheight{0,07cm}{NK}}
\providecommand{\neurei}{\hspace*{-0,08cm}\scaletoheight{0,07cm}{NR}}
\providecommand{\kap}{\hspace*{-0,08cm}\scaletoheight{0,07cm}{K}}
\providecommand{\Hoe}{H\"older}
\providecommand{\Levy}{L\'evy}
 \numberwithin{equation}{section}
\newtheorem{satz}{Satz}[section]
\newtheorem{definition}[satz]{Definition}
\newtheorem{theorem}[satz]{Theorem}
\newtheorem{proposition}[satz]{Proposition}
\newtheorem{corollary}[satz]{Corollary}
\newtheorem{lemma}[satz]{Lemma}
\theoremstyle{definition}
\DeclareMathOperator{\E}{{\mathbb E}}
\DeclareMathOperator{\R}{{\mathbb R}}
\DeclareMathOperator{\Z}{{\mathbb Z}}
\DeclareMathOperator{\N}{{\mathbb N}}
\DeclareMathOperator{\PP}{{\mathbb P}}
\DeclareMathOperator{\Q}{{\mathbb Q}}
\DeclareMathOperator{\supp}{supp}
\DeclareMathOperator{\arginf}{arginf}
\DeclareMathOperator{\argmin}{argmin}
\DeclareMathOperator{\Var}{Var} \DeclareMathOperator{\Cov}{Cov}
\DeclareMathOperator{\kf}{K}
\DeclareMathOperator{\lk}{L}
\DeclareMathOperator{\pen}{pen}
\DeclareMathOperator{\strafH}{H}
\renewcommand{\emph}{\textit}
\renewcommand{\d}{\ensuremath{\,\textnormal{d}}}
\providecommand{\eps}{\varepsilon}
\renewcommand{\phi}{\varphi}
\renewcommand{\supset}{\supseteq}
\providecommand{\Fourier}{\mathcal{F}}
\providecommand{\emphit}{\textnormal}
\begin{document}

\title{Adaptive nonparametric estimation for L\'evy processes observed at low frequency}

\author{Johanna Kappus\\[0,2cm]
\small \textit{
Institut f\"ur Mathematik,
Universit\"at Rostock,
18051 Rostock,Germany}\\
\small \textit{Email: johanna.kappus@uni-rostock.de, Phone: 49 381 498-6604}
}
\date{\ }
\maketitle
\vspace*{-0,8cm}
\noindent\rule{\textwidth}{1pt}
\begin{abstract}
\noindent This article deals with adaptive nonparametric estimation  for L\'evy processes observed at low frequency.  
For general linear functionals of the L\'evy measure, we construct kernel estimators, provide upper risk bounds and derive rates of convergence under regularity assumptions.

\noindent Our focus lies on the adaptive choice of the bandwidth, using model selection techniques. We face here a non-standard problem of model selection with unknown variance. A new approach towards this problem is proposed, which also allows a straightforward generalization to a classical density deconvolution framework. 
\end{abstract}
\noindent\rule{\textwidth}{1pt}
\section{Introduction}
\label{lab}
\noindent L\'evy processes, continuous time stochastic processes with independent and stationary increments, are the building blocks for a large number of continuous time models with jumps which play an important role, for example, in the modeling of financial data. See \citet{Cont-Tankov} for an overview of the topic. The problem of estimating the characteristics of a L\'evy process is thus  not only a topic of great theoretical relevance, but also an important issue for practitioners and has received considerable attention over the 
past decade.
Starting from the work by 
\citet{Belomestny_Reiss},  nonparametric estimation methods for L\'evy processes have been considered in a number of articles in the past few years. Let us mention \citet*{Neumann_Reiss},  \citet{Gugushvili_1,Gugushvili_2},  \citet{Comte_Genon_1, Comte_Genon_2, Comte_Genon_3}, \citet*{Figueroa_1} and \citet*{Figueroa_2},  \citet*{Nickl_Reiss}, and \citet*{Belomestny_2}.
For results on time changed L\'evy processes, see \citet{Belomestny_tcLevy}. 

In the present work, we focus on the adaptive  estimation of the jump  measure for L\'evy processes observed at low frequency. 
The following statistical model is being considered: A L\'evy process $X$  having finite variation  on compact sets and  finite first moments is observed at discrete, equidistant time points. We investigate the nonparametric  estimation of linear functionals of the finite signed measure $\mu(\d x) = x\nu(\d x)$, with $\nu$ denoting the L\'evy measure.
Kernel estimators  are constructed and upper bounds on the corresponding risk are derived. Our main concern is  to provide a strategy for the data driven choice of the smoothing parameter, using techniques of model selection via penalization. 

The model selection approach to adaptive estimation  has been extensively studied in the literature, starting from the work by  Birg\'e and Massart in the late 90's, see, for example, \citet*{Massart_Birge} and \citet*{Birge}.  The model selection point of view essentially differs from other existing methods, typically in the spirit of Lepski, see  e.g. \cite{Lepski_1,Lepski_2},  in the sense that the problem is considered from a non-asymptotic perspective. We refer to \citet*{Massart_Birge} for a detailed discussion and systematic comparison. 

Recently, there is a strong tendency to apply model selection techniques in adaptive estimation problems for L\'evy processes, see \citet{Figueroa_1} and subsequent papers,  and   \citet{Comte_Genon_2,Comte_Genon_1,Comte_Genon_3}. However, the above mentioned papers have mainly focused on a situation where continuous time or high frequency observations of the process are available. In the present work, we consider and fully solve the problem of adaptation in a low frequency framework.

It is well known that, depending on the nature of the 
observations, there are two fundamentally different approaches to the estimation of the  L\'evy measure: For continuous time or high frequency data, the jumps are directly observable or observable in the limit, so it is possible to use the empirical jump measure as an estimator of 
the true underlying jump measure. The estimation procedure can then be understood in analogy with a density estimation problem. 
When low frequency observations of the process are available, one has to exploit the structural properties of infinitely divisible laws and faces a more complicated deconvolution type structure. 

In a density deconvolution framework, adaptive estimation by model selection has been considered 
by  \citet*{Comte_Rosenholz} and by \citet*{Comte_Lacour}. For estimating linear functionals in a density deconvolution model, see  \citet*{Butucea}. However, in those  papers, the distribution of the noise is assumed to be known and the adaptive procedure crucially depends on the fact that the variance term is feasible, which is no longer true in the present model.
 Indeed, the situation which we consider in the present work can be understood in analogy with a deconvolution problem with unknown distribution of the noise. 

Rates of convergence for deconvolution problems with unknown error density have been studied, since the late 90's, for example, in 
\citet{Neumann, Neumann_2}, \citet{Meister_2, Alex_Meister_habil} or \citet{Johannes}. However,  few literature is available on adaptive estimation in deconvolution problems with unknown distribution of the errors. We are aware of the work by  \citet*{Schwarz_Johannes} in a circular deconvolution model and of  \citet*{Comte_Lacour}. We propose here a new approach to dealing with the unknown variance. Although our results are formulated for estimation problems in a L\'evy model, they can be generalized to a classical density deconvolution framework. 

 Compared to the reasoning in \citet{Comte_Lacour}, we can avoid the loss of a polynomial factor. Moreover, unlike in \citet*{Comte_Lacour} and \citet*{Schwarz_Johannes}, our reasoning does not rely on certain semiparametric assumptions on the decay of the characteristic function.  This  allows a fully general treatment. 

The main technical step involved in our arguments relies on an application of the Talagrand inequality, which allows to give control on the fluctuation of the empirical characteristic function in the denominator around its target uniformly on the real line, thus improving a classical pointwise result presented in  \citet*{Neumann}.

This paper is organized as follows:  In Section \ref{Abschnitt_Modell},  we start by specifying the statistical model and technical assumptions.  In Section  \ref{Abschnitt_Schaetzer_und_Raten} kernel  estimators  are introduced and upper bounds on  the corresponding risk are provided. From this, rates of convergence are derived under regularity assumptions.  The adaptive estimation procedure and hence the main result of this paper is then presented in Section \ref{Abschnitt_adaptive_Schaetzung}.
All proofs are postponed to Section \ref{Beweisteil}.


\section{Statistical model}
\label{Abschnitt_Modell}
A L\'evy process $X=\{X_t: t\in \R^+\}$ taking values in $\R$ is observed at equidistant time points \mbox{$\Delta, \cdots, 2n\Delta$}, with $\Delta>0$ fixed. 

We  work under the following technical  assumptions:
\textit{
\begin{itemize}
 \item[(A1)]  $X$ is of pure jump type.
\item [(A2)]  $X$ has moderate activity of small jumps in the sense that the following holds
true for the  \mbox{L\'evy measure $\nu$}:
\begin{equation}
\label{Bedingung_LM_1}
 \int_{\{|x|\leq 1\}} \limits |x| \nu(\d x) < \infty.
\end{equation}
\item[(A3)] $X$ has no drift component.
\item[(A4)] For one and hence for any $t> 0$, $X_t$ has a finite second moment. This is equivalent to stating that
\begin{equation}
\label{Bedingung_LM_2}
 \int |x|^2 \nu(\d x) <\infty.
\end{equation}
\end{itemize}}
Imposing the assumptions (A1) and (A2) is equivalent to stating that $X$  has finite variation on compact sets. 

Under (A1)-(A4),  the characteristic 
function of $X_{\Delta}$ is given by
\begin{equation*}
\label{CF}
 \phi_{\Delta}(u):= \E \left[ e^{iuX_{\Delta}} \right] = e^{\Delta \Psi(u)},
\end{equation*}
with characteristic exponent
\begin{equation}
 \label{Spezialform_CE}
\Psi(u) = \int \left(e^{iux}-1  \right) \nu(\d x) = \int \frac{e^{iux}-1}{x} x\nu(\d x).
\end{equation}
(A proof can be found, for example, in Chapter 2 in  \citet*{Sato}).
The process is thus fully described by the signed measure $\mu(\d x):= x \nu(\d x)$, which is finite thanks to
(\ref{Bedingung_LM_1}) and (\ref{Bedingung_LM_2}).

It is worth mentioning that the conditions (A1)-(A3) can be relaxed, but at the cost of substantially  complicating the structure of the estimator and proofs without allowing much further insight into the nature of the problem.  See  \citet*{Neumann_Reiss} or  \citet{Belomestny_tcLevy} for estimation strategies in a more general framework.

Our goal is to estimate a linear functional of $\mu$.  Let $f$ be a distribution and assume that one of the following conditions is met:
\textit{
\begin{itemize}
 \item[(F1)] $f$ is regular  and can hence be identified with a function.  $f\in \lk^1(\R)$ and $\sup_{x\in \R} |f(x)|<\infty$.
\item[(F2)] $f$ is compactly supported, with order $k$,  and for some open interval  $D=(d_1, d_2)$ with  $\supp(f)\subseteq D$, the restriction $\mu|_D$ possesses a density  \mbox{$g_D\in C^k(D)$}.
\end{itemize}  }
Then the parameter of interest is
\[
\theta:=\langle f, \mu \rangle.
\]
For the definition of the order of a distribution and a concise overview of the theory of distributions, we refer to   \citet{Rudin}. For a detailed introduction, see \citet{Jantscher}. 

To better understand the meaning of the abstract assumptions (F1) and (F2), let us mention that under (F1), we simply have 
\[
\theta = \int f(x) \mu(\d x)
\]
and that (F2) covers typical problems such as point estimation or the estimation of derivatives.  For notational convenience, we will often write, formally,  $\int f(x)\mu(\d x):= \langle f, \mu\rangle$ even though $f$ may be nor-regular. 

The special case of testing $\mu$ with smooth functions has been considered in \citet*{Neumann_Reiss} and point estimation has been dealt with in \citet{Belomestny_tcLevy}. 

However, to the best of our knowledge, the estimation of arbitrary linear functionals of $\mu$ has not yet been treated in full generality. 

Moreover, the problem of adaptive estimation, which is the main concern of the present work, has not been treated in the above mentioned papers. 
\section{Estimation procedure, risk bounds and rates of convergence}
\label{Abschnitt_Schaetzer_und_Raten}
\subsection{Construction of the estimator  and non-asymptotic risk bounds}
Formula (\ref{Spezialform_CE}) allows to recover the Fourier transform $\Fourier \mu$ of $\mu$  by derivating  the characteristic exponent,
\begin{equation*}
  \Psi'(u)
= i \int e^{iux} \mu(\d x) =i \Fourier \mu(u).
\end{equation*}
In terms of the characteristic function and its derivative,
\begin{equation}
 \label{Fouriertransformierte_mu}
\Fourier \mu(u)  =
 \frac{\frac{1}{\Delta} \phi'_{\Delta}(u)}{i \phi_{\Delta}(u)}.
\end{equation}
Recall that the characteristic function of an infinitely divisible law possesses no zeros (see Lemma 7.5 in 
\citet{Sato}), so
dividing by $\phi_{\Delta}$ is not critical.

By the definition of a L\'evy process, the  increments $Z_{\Delta,k}:= X_{k\Delta}- X_{(k-1)\Delta},$  $k=1,\cdots, 2n$   form i.i.d. copies of $X_{\Delta}$.
We can thus define the empirical versions of $\phi_{\Delta}$ and $\phi'_{\Delta}$, setting
\begin{equation*}
\label{emp_char_fkt}
\hat{\phi}_{{\Delta,n} }(u):= \frac{1}{n } \sum_{k=1}^{n} e^{iu Z_{\Delta, k}}
\end{equation*}
and
\begin{equation*}
\label{emp_char_fkt_abl}
\hat{\phi}'_{{\Delta,n}}(u):= \frac{1}{n} \sum_{k=n+1}^{2 n} i Z_{\Delta, k} e^{iuZ_{\Delta, k }}.
\end{equation*}
Splitting the observations into two independent samples will be crucial for the adaptive procedure, see the proof of Proposition \ref{wichtigstes_Hilfsresultat} in Section \ref{Beweisteil_adaptiv}.

Following the approach in \citet{Comte_Genon_1} and \citet{Comte_Lacour}, we  replace the empirical characteristic function in the denominator by its truncated  version, setting
\begin{equation*}
\label{Schaetzer_Nenner}
\frac{1}{\tilde{\phi}_{{\Delta,n}}(u)}:= \frac{1(|\hat{\phi}_{{\Delta,n}}(u)|\geq (\Delta n)^{-1/2} )}
{\hat{\phi}_{{\Delta, n}}(u)}.
\end{equation*}
This definition has originally been introduced in  \citet{Neumann}.

Parseval's identity permits 
to express the quantity of interest in the Fourier domain, 
\begin{equation}
\label{Parseval}
\theta = \frac{1}{2\pi} \int \Fourier f(-u) \Fourier \mu(u) \d u.
\end{equation}
Recall, at this point, that the Fourier transform of a compactly supported distribution is a function, see Theorem 7.23 in \citet{Rudin}. 

Let $\kf$ be a kernel function. We use the notation $\kf_h(x) := h^{-1} \kf\left( x/h    \right)$.  
$\kf$ is chosen such that 
\textit{
\begin{itemize}
\item[(K1)] For any $h>0$,  $\Fourier \kf_h \Fourier f(-\cdot) \in \lk^1(\R)$. 
\item[(K2)] If $f$ is non-regular, with order $k$, $\kf$ is $k$-times continuously differentiable. 
\end{itemize}  }
Formula (\ref{Fouriertransformierte_mu}) and formula (\ref{Parseval}) suggest to define the kernel estimator 
 \begin{equation*}
 \label{Definition_allgemeiner_Kernschaetzer}
\hat{\theta}_{\Delta,h,n}:= \frac{1}{2\pi} \int \Fourier f (-u) \Fourier \kf_h (u) \frac{\frac{1}{\Delta}\hat{\phi}_{{\Delta,n}}'(u)}
{i \tilde{\phi}_{{\Delta,n}}(u)}\d u.
\end{equation*}
The following upper bound can be derived for the squared risk: 
\begin{theorem}
\label{Hauptsatz_allgemeiner_Kernschaetzer_Levy} Assume that (A1)-(A4),
(F1) or (F2)  and (K1)+(K2)  are met.  Then 
\begin{align*}
\label{obere Schranke}
\notag&\phantom{\leq}  \E \left[ \left|\theta- \hat{\theta}_{\Delta,h, n}\right|^2 \right] \\
\notag  &\leq 
2 \left| \int f(x) \mu(\d x) - \int f(x) \left( \kf_h\ast \mu\right)(\d x)\right|^2 \\
 &\phantom{\leq} +    \frac{T^{-1}}{2 \pi^2 }\left\{ C_1\int
 |\Fourier \kf (hu)|^2 \left|\frac{\Fourier f(-u)}{\phi_{\Delta}(u)}\right|^2 \d u
\wedge  C_2 \left(\int |\Fourier \kf ( hu)|\left|\frac{\Fourier f(-u)}{\phi_{\Delta}(u)}\right| \d u\right)^2
\right\}, 
\end{align*}
with $T:= \Delta n$ and   with constants
\begin{equation}
 \label{Definition_Konstante_1}
C_1=  C  \left(  \int |\Psi''(x)|\d x + 2 \int |\Psi'(x)|^2\d x \right)\leq \infty
 \end{equation}
 and
 \begin{equation}
 \label{Definition_Konstante_2}
 C_2= C \left(  \|\Psi''\|_{\infty} +  2 \|\Psi'\|_{\infty}^2\right) <\infty
 \end{equation}
for a  universal positive constant $C$. 
\end{theorem}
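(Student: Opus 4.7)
The proof follows the classical bias-variance scheme. I introduce the deterministic smoothed parameter
\[
\theta_h \;:=\; \frac{1}{2\pi}\int \Fourier f(-u)\,\Fourier \kf_h(u)\,\Fourier\mu(u)\,\d u,
\]
which by Parseval's identity equals $\int f(x)(\kf_h\ast\mu)(\d x)$. Applying $(a-b)^2 \le 2(a-c)^2 + 2(c-b)^2$ with $c=\theta_h$ immediately produces the bias term on the first line of the claimed bound and leaves the stochastic error $\E[|\theta_h - \hat\theta_{\Delta,h,n}|^2]$. Using (\ref{Fouriertransformierte_mu}), this error rewrites as
\[
\theta_h - \hat\theta_{\Delta,h,n} \;=\; \frac{1}{2\pi i\Delta}\int w(u)\Bigl[\frac{\phi'_\Delta(u)}{\phi_\Delta(u)} - \frac{\hat\phi'_{\Delta,n}(u)}{\tilde\phi_{\Delta,n}(u)}\Bigr]\d u,\qquad w(u):=\Fourier f(-u)\,\Fourier\kf(hu),
\]
and I split the bracket in Neumann's fashion as $\frac{\hat\phi'_{\Delta,n} - \phi'_\Delta}{\phi_\Delta} + \hat\phi'_{\Delta,n}\cdot\frac{\phi_\Delta - \tilde\phi_{\Delta,n}}{\phi_\Delta\tilde\phi_{\Delta,n}}$, separating the ``linear'' numerator fluctuation from the inverse-denominator fluctuation.

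For the first, linear, piece set $g := w/\phi_\Delta$. Since $\hat\phi'_{\Delta,n}$ is a mean of $n$ i.i.d.\ copies of $iZ_{\Delta,1}e^{iu Z_{\Delta,1}}$, Fubini and centering yield
\[
n\cdot\Var\Bigl(\int g(u)\,\hat\phi'_{\Delta,n}(u)\d u\Bigr) \le \E\Bigl[Z_{\Delta,1}^2\Bigl|\int g(u)\,e^{iuZ_{\Delta,1}}\d u\Bigr|^2\Bigr] = -\int\int g(u)\overline{g(v)}\,\phi''_\Delta(u-v)\d u\,\d v.
\]
Differentiating $\phi_\Delta = e^{\Delta\Psi}$ twice and using $|\phi_\Delta|\le 1$ bounds the convolution kernel $|\phi''_\Delta|$ pointwise by $\Delta|\Psi''|+\Delta^2|\Psi'|^2$, hence in $L^1$ by $\Delta\int|\Psi''|+\Delta^2\int|\Psi'|^2$ and in $L^\infty$ by $\Delta\|\Psi''\|_\infty+\Delta^2\|\Psi'\|_\infty^2$. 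Young's inequality in its $L^1\ast L^2\to L^2$ and $L^1\ast L^\infty\to L^\infty$ forms then bounds the Hermitian integral by either $\|\phi''_\Delta\|_1\,\|g\|_2^2$ or $\|\phi''_\Delta\|_\infty\,\|g\|_1^2$; restoring the prefactor $1/(2\pi\Delta)^2$ produces exactly the $T^{-1}$-rate with constants $C_1$ and $C_2$ in the claimed form.

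The main obstacle is the second summand, where $\tilde\phi_{\Delta,n}$ appears in the denominator. My plan is to exploit the deterministic truncation $|1/\tilde\phi_{\Delta,n}|\le\sqrt{\Delta n}$ in combination with the pointwise event split $A(u):=\{|\hat\phi_{\Delta,n}(u)-\phi_\Delta(u)|\le|\phi_\Delta(u)|/2\}$. On $A(u)$ one has $|\tilde\phi_{\Delta,n}|\ge|\phi_\Delta|/2$, so the second piece is controlled by the same bilinear estimate as the first up to an absolute constant; on $A(u)^c$, Hoeffding's inequality applied to the bounded complex summands $e^{iuZ_{\Delta,k}}$ and $Z_{\Delta,k} e^{iuZ_{\Delta,k}}$ delivers an exponentially small probability, which multiplied by the worst-case truncation factor $\sqrt{\Delta n}$ contributes a term of strictly smaller order than $T^{-1}$. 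The delicate step is ensuring this pointwise analysis remains uniform in $u$ before integrating against $w$, particularly for those $u$ where $|\phi_\Delta(u)|\asymp(\Delta n)^{-1/2}$; there the Young-type bounds from the previous paragraph absorb the remainder. Reassembling the two pieces with the bias estimate completes the proof.
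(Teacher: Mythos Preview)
Your bias--variance split and your treatment of the first, linear piece via the Hermitian form with kernel $\phi''_\Delta(u-v)$ and Young's inequality are correct and run parallel to the paper (which does not isolate this piece but bounds the full bilinear expectation at once via Lemma~\ref{Hauptabschaetzung}). Your decomposition is a legitimate algebraic alternative to the paper's $\frac{\hat\phi'_{\Delta,n}-\phi'_\Delta}{\tilde\phi_{\Delta,n}}+\phi'_\Delta\,R_{\Delta,n}$, with $R_{\Delta,n}:=\tfrac{1}{\tilde\phi_{\Delta,n}}-\tfrac{1}{\phi_\Delta}$.

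The genuine gap is in the second piece. You never invoke the sample split: in this paper $\hat\phi_{\Delta,n}$ and $\hat\phi'_{\Delta,n}$ are built from disjoint halves of the data and are therefore independent, which is exactly what allows the second moment of $\int w\,\hat\phi'_{\Delta,n}\,R_{\Delta,n}$ to factor into $\E\bigl[\hat\phi'_{\Delta,n}(u)\overline{\hat\phi'_{\Delta,n}(v)}\bigr]\cdot\E\bigl[R_{\Delta,n}(u)\overline{R_{\Delta,n}(v)}\bigr]$ and then be closed by Neumann's pointwise moment bound $\E|R_{\Delta,n}(u)|^2\le C\bigl(T^{-1}|\phi_\Delta(u)|^{-4}\wedge|\phi_\Delta(u)|^{-2}\bigr)$. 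The paper in fact merges both pieces into a single bound on the product expectation (Lemma~\ref{Hauptabschaetzung}), using precisely this independence to make the cross terms vanish. Your proposed substitute---the pointwise event $A(u)$ plus Hoeffding on $A(u)^c$---does not work as written: Hoeffding does not apply to the unbounded summands $Z_{\Delta,k}e^{iuZ_{\Delta,k}}$, and, more seriously, in the regime you yourself flag as delicate, namely $|\phi_\Delta(u)|\asymp(\Delta n)^{-1/2}$, one has $\PP(A(u)^c)\ge c>0$ while the worst-case truncation factor $\sqrt{\Delta n}$ is of the same order as $1/|\phi_\Delta(u)|$, so nothing is gained over the trivial bound and the Young estimates from the first piece do not absorb the remainder. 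Replacing the event-split heuristic by Neumann's moment lemma together with the sample-split independence is what is actually needed; once you do that, your decomposition closes and produces the same constants as the paper's.
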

 For the special case of point estimation, Theorem  \ref{Hauptsatz_allgemeiner_Kernschaetzer_Levy} is in accordance with the results found in \citet{Belomestny_tcLevy}. It should also be compared to the deconvolution framework which is considered  in  \citet{Butucea}. 
\subsection{Rates of convergence}
We investigate here the rates of convergence which can be derived from the upper risk bounds given in 
Theorem \ref{Hauptsatz_allgemeiner_Kernschaetzer_Levy}  under regularity assumptions on $\mu$ and $f$
and on the decay of the characteristic function. 

Recall the following definitions: For $a\in \R$ and $M> 0$, the  \textit{Sobolev class} $\mathcal{S}(a,M)$  consists of those   tempered distributions, for which 
\begin{equation*}
\int \left| \Fourier f(-u) \right|^2  \left(1+|u|^2\right)^a \d u \leq M. 
\end{equation*}
Let $\langle a \rangle:= \max\{k\in \N: k< a\}$. For an open subset 
$D\subseteq \R$, and positive constants $a,L$ and $R$,   the \textit{\Hoe \  class}
$\mathcal{H}_{D}(a, L, R)$ consists of those functions  $f$ for which \mbox{$\sup_{x\in D}|f(x)|\leq R$ holds and  $f\large|_D$} is $\langle a \rangle$-times continuously differentiable, with 
\begin{equation*}
\sup_{x,y\in D, x\not=y}  \frac{ |f^{(\langle a\rangle)}(x) - f^{(\langle a \rangle)}(y)|}{|x-y|^{a- \langle a \rangle} }\leq L .
\end{equation*}
A kernel $\kf$ is called a \emphit{$k$-th order kernel}, if for all integers $ 1\leq m < k$,
\begin{equation}
\label{Ordnung_Kern_1}
 \int x^m K(x) \d x=0
\end{equation}
and moreover,
\begin{equation*}
\label{Ordnung_Kern_2}
 \int |x|^k|\kf(x)| \d x< \infty.  \nonumber
\end{equation*}
Formula (\ref{Ordnung_Kern_1}) is equivalent to stating that $\left( \Fourier \kf\right)^{(m)}(0)$ vanishes for $1\leq m <k$.
\subsubsection{Rate results under global regularity assumptions}
We start by providing rate results under global regularity assumptions on the signed measure $\mu$ and on  the test function $f$, measured in a Sobolev sense.  This point of view is appropriate when one is interested in estimating integrals of the form $\theta= \int f(x) \mu(\d x)$ with some function $f$ which does not vanish near the origin.

Let us introduce the following  nonparametric classes of signed measures: 
\begin{definition}
\label{Definition_Glattheitsklasse_Masse_Sobolev} For constants $\bar{C}_1, \bar{C}_2, C_{\phi}, M_{\mu} >0$, 
 $ a \in \R $ and $c_{\phi},\beta, \rho \geq 0 $,
let 
$ \mathcal{M}(\bar{C}_1, \bar{C}_2, C_{\phi}, c_{\phi}, \beta, \rho, a, M_{\mu})$
be the collection of finite signed measures $\mu$ for which the following holds:
\begin{enumerate}
\item[(i)] There is a L\'evy process $X$ such that  the assumptions (A1)-(A4) are satisfied
and for the corresponding L\'evy measure $\nu$,  \mbox{$\mu(\d x) = x\nu(\d x)$}.
\item [(ii)] For the characteristic function $\phi$ of $X_1$, the following holds: 
\begin{eqnarray*}
\label{Definition_Glattheitsklasse_Abfall_chF}
 \forall u\in \R: \  |\phi(u)|\geq C_{\phi} (1+|u|^2)^{-\frac{\beta}{2}} e^{-c_{\phi} |u|^{\rho}}.
\end{eqnarray*}
\item[(iii)] For $C_1$ and $C_2$ defined  in (\ref{Definition_Konstante_1}) and (\ref{Definition_Konstante_2}), 
$C_1\leq \bar{C}_1$ and $C_2\leq \bar{C}_2$.
\item[(iv)] $\mu$ belongs to the Sobolev class $\mathcal{S}(a,M_{\mu})$.
\end{enumerate}
 \end{definition}
\hspace*{-0,3cm}We denote by  $\PP_{\mu}$ the distribution of $X_1$ and by $\E_{\mu}$ the expectation with respect to $\PP_{\mu}$.
\begin{theorem}
\label{Hauptsatz_Konvergenzraten_Sobolev}
Assume that \mbox{$f \in \mathcal{S}(s,M_f)$}  for some $s\in \R$ and some positive constant $M_f$.
Consider the nonparametric class $\mathcal{M}:= \mathcal{M}(\bar{C}_1, \bar{C}_2, C_{\phi}, c_{\phi}, \beta, \rho, a, M_{\mu})$,
with \mbox{$a>-s$}.   Assume  that $\Fourier \kf $ is supported on $[-\pi,\pi]$ and that either $\Fourier \kf=1_{[-\pi,\pi]}$  (sinc kernel) or $\kf$ has order $a+s$ and 
$\Fourier \kf \in \mathcal{H}_{\R}(a+s, L_{\kf}, R_{\kf})$ for positive constants $L_{\kf}$ and  $R_{\kf}$.

Then, selecting \mbox{$h_{\Delta,n}^*$} in an optimal way,  we derive  that
\begin{equation*}
 \label{Rate_Sobolev}
\sup_{\mu \in \mathcal{M}}  \E_{\mu} \left[ \left| \theta - \hat{\theta}_{h^{*}_{\Delta,n},\Delta, n}\right|^2 \right] = O\left( r_{\Delta,n} \right),
\end{equation*}
with $(r_{\Delta,n})$ denoting the sequences which are  summarized in the following table:
   \setlength{\tabcolsep}{10pt}
\renewcommand{\arraystretch}{2}
\begin{center}
\begin{tabular}{|l||l | c||l | c|}
\hline
\  &  \multicolumn{2}{c||} { {$\overline{C}_1<\infty$} } & \multicolumn{2}{c|}{  {$\overline{C}_1=\infty$} } \\
\hline
\hline
\multirow{3}{*} { {$\rho=0$} } & {$s\geq \Delta \beta$} & {$ T^{-1}$}  &  {$ s\geq \Delta \beta +\frac{1}{2}$} & {$T^{-1}$} \\
\cline{2-5}
& {$s=\Delta \beta$} &  {$T^{-1} $} &  {$ s =  \Delta \beta +\frac{1}{2}$} & {$(\log T) T^{-1}$}  \\
\cline{2-5}
& {$s < \Delta \beta$}  & {$ T^{-\frac{2a+2s}{2a+2 \Delta \beta}}$ } & {$s<\Delta \beta+\frac{1}{2}$} & {$n^{-\frac{2a+2s}{2a+2\Delta \beta+1}}$}\\
\hline
{$\rho>0$}  & \  &  {$(\frac{\log T}{\Delta})^{-\frac{2a+2s}{\rho}}$} & \  &  {$(\frac{\log T}{\Delta})^{-\frac{2a+2s}{\rho}}$} \\
\hline
\end{tabular}
\end{center}
\end{theorem}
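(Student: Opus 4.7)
The plan is to insert the Sobolev regularity of $\mu$, the Sobolev regularity of $f$ and the lower bound on $|\phi|$ into the bias–variance decomposition delivered by Theorem~\ref{Hauptsatz_allgemeiner_Kernschaetzer_Levy}, and then optimise over $h$ in each of the regimes distinguished by the table. Throughout, the fact that $\Fourier\kf$ is supported in $[-\pi,\pi]$ means that $\Fourier\kf_h=\Fourier\kf(h\cdot)$ is supported in $[-\pi/h,\pi/h]$, so both the bias and the variance integrals reduce to a tail integral and a truncated integral respectively, parametrised by $1/h$.

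For the bias, I would apply Parseval to write
\begin{equation*}
\theta-\langle f,\kf_h\ast\mu\rangle=\frac{1}{2\pi}\int \Fourier f(-u)\bigl(1-\Fourier\kf(hu)\bigr)\Fourier\mu(u)\d u.
\end{equation*}
For the sinc kernel, $1-\Fourier\kf(hu)=\mathbf{1}_{|u|>\pi/h}$, and Cauchy--Schwarz distributing $(1+u^2)^{s/2}$ on $\Fourier f$ and $(1+u^2)^{a/2}$ on $\Fourier\mu$ yields
\begin{equation*}
\bigl|\theta-\langle f,\kf_h\ast\mu\rangle\bigr|^2\le C\,M_f M_\mu\,h^{\,2(a+s)},
\end{equation*}
provided $a+s>0$ (which is assumed). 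For a kernel of order $a+s$ with $\Fourier\kf\in\mathcal{H}_\R(a+s,L_\kf,R_\kf)$, a Taylor expansion at the origin plus Hölder control gives $|1-\Fourier\kf(hu)|\lesssim |hu|^{a+s}\wedge 1$, and the same bias bound $h^{2(a+s)}$ follows.

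For the variance, condition (ii) of Definition~\ref{Definition_Glattheitsklasse_Masse_Sobolev} gives $|\phi_\Delta(u)|^{-2}\le C_\phi^{-2\Delta}(1+u^2)^{\Delta\beta}e^{2\Delta c_\phi|u|^\rho}$. When $\bar C_1<\infty$, I would bound the first term of the minimum by
\begin{equation*}
C\,T^{-1}\int_{|u|\le\pi/h}|\Fourier f(-u)|^2(1+u^2)^{\Delta\beta}e^{2\Delta c_\phi|u|^\rho}\d u\le C\,T^{-1}M_f\sup_{|u|\le\pi/h}\!\bigl[(1+u^2)^{\Delta\beta-s}e^{2\Delta c_\phi|u|^\rho}\bigr].
\end{equation*}
When only $\bar C_2<\infty$, I would apply Cauchy--Schwarz inside the squared integral to replace it by
\begin{equation*}
C\,T^{-1}M_f\int_{|u|\le\pi/h}(1+u^2)^{\Delta\beta-s}e^{2\Delta c_\phi|u|^\rho}\d u,
\end{equation*}
which loses a factor $1/h$ relative to the first bound when the integrand is not integrable at infinity. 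For $\rho=0$ the sup (respectively integral) is bounded if $s>\Delta\beta$ (resp.\ $s>\Delta\beta+\tfrac12$), logarithmic at equality, and of polynomial order $h^{-2(\Delta\beta-s)}$ (resp.\ $h^{-(2\Delta\beta-2s+1)}$) otherwise. For $\rho>0$ the exponential dominates and the variance is of order $T^{-1}e^{2\Delta c_\phi(\pi/h)^\rho}$ up to polynomial factors.

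Finally, balancing the bias $h^{2(a+s)}$ against the variance leads to the table. In the polynomial case one chooses $h^{2a+2\Delta\beta}\sim T^{-1}$ (when $\bar C_1<\infty$) or $h^{2a+2\Delta\beta+1}\sim T^{-1}$ (when $\bar C_1=\infty$), yielding the stated algebraic rates; above the threshold on $s$ one picks any $h$ with bias $\le T^{-1}$ to obtain the parametric rate, with a $\log T$ factor appearing exactly at the boundary. In the exponential case $\rho>0$, balancing forces $h^{-1}\sim(\log T/\Delta)^{1/\rho}$ and gives the logarithmic rate $(\log T/\Delta)^{-2(a+s)/\rho}$, uniformly in whether the $C_1$ or $C_2$ bound is used, because the exponential swamps the polynomial discrepancy. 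The main obstacle is bookkeeping: the correct distribution of the weight $(1+u^2)^s$ from $\Fourier f$ between the ``signal'' and ``noise'' factors in Cauchy--Schwarz, together with the careful identification of the logarithmic boundary case in the $\bar C_1=\infty$ regime, are the only subtle points; everything else is a routine computation given Theorem~\ref{Hauptsatz_allgemeiner_Kernschaetzer_Levy}.
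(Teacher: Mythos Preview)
Your proposal is correct and follows essentially the same route as the paper. The paper isolates the bias bound as a separate lemma (Lemma~\ref{Lemma_Bias_Sobolev}), using exactly the Parseval--Cauchy--Schwarz argument you describe, with the weight $(1+|u|^2)^{-(a+s)}$ placed on $|1-\Fourier\kf(hu)|^2$ and then controlled via the sinc support or the Taylor/H\"older expansion; it then declares the rates an ``immediate consequence'' of combining this with Theorem~\ref{Hauptsatz_allgemeiner_Kernschaetzer_Levy} and the decay assumption on $\phi$, which is precisely the variance analysis and bias--variance balance you spell out.
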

\    \\
\textit{Discussion:} These rates of convergence should be compared to the  results which are known in a deconvolution framework, see \citet*{Butucea}. 

However, it is important to keep in mind that there are striking structural differences between the \Levy \ setting and a density deconvolution problem.

Very much unlike in a deconvolution framework, the parameters $a, \beta$ and $\rho$  are by no means independent of each other. A fast decay of the characteristic functions will always indicate a high activity of small jumps, so the jump measure cannot have a globally smooth Lebesgue density, but will be ill-behaved near the origin. Large values of $\beta$ and $\rho$ will hence not only in themselves lead to slow rates of convergence, but also result in small values of $a$. For a detailed discussion, we refer to \cite{Orey}, \cite{Belomestny_tcLevy} and \cite{Kappus}. 
\subsubsection{Rate results under local regularity assumptions}
If $f$ has a compact support which is bounded away from the origin,  the point of view of measuring the smoothness of $\mu$ in a global Sobolev sense is inappropriate.  This is true, for example, for point estimation and the estimation of derivatives. 
In this setting, we consider classes of measures which have a H\"older regular density in a neighbourhood of the point or interval of interest. 
\begin{definition}
\label{Definition_Glattheitsklasse}  For constants $\bar{C}_1, \bar{C}_2, C_{\phi}, L, R, a >0$ and  
$c_{\phi}, \beta, \rho \geq 0$  and a bounded open interval $D=(d_1,d_2)$, let $\mathcal{M}(\bar{C}_1, \bar{C}_2, C_{\phi}, c_{\phi}, \beta, \rho, a, D,L,R)$
be the collection of finite signed measures $\mu$, for which the following holds:
The items (i)-(iii) from Definition \ref{Definition_Glattheitsklasse_Masse_Sobolev} are true and
\begin{enumerate}
\item[(iv)]  The restriction $\mu|_D$ possesses a Lebesgue density  $g_D\in \mathcal{H}_D(a,L,R)$.
\end{enumerate}
 \end{definition}
\begin{theorem}
\label{Hauptsatz_Raten_Hoelder}
Assume that  $f$ is compactly supported, with $\supp(f)=:[a,b]\subseteq \R\setminus\{0\}$.
Assume, moreover, that for some $s\in \Z$ and some positive integer $C_f$, 
\begin{equation*}
\forall u\in \R:  |\Fourier f(-u)| \leq C_f (1+|u|^2)^{-s}.
\end{equation*}
Assume  that the order of $\kf$ is $a+s$ , $\kf$ is $-s\vee 0$-times continuously differentiable
and there is a constant $C_{\kf}>0$ such that for any nonnegative integer $m\leq (0\vee -s)$,
\begin{equation*}
 \forall z \in \R:    \left|\kf^{(m)}(z)\right|\leq C_{\kf} (1+|z|)^{-(a+s)-m-1}. 
 \end{equation*}
  Consider the nonparametric class
$\mathcal{M}= \mathcal{M}(\bar{C}_1, \bar{C}_2, C_{\phi}, c_{\phi}, \beta, \rho, a, D,L,R)$ with $D=(d_1,d_2)\supset [a,b]$ and $a>-s$.  Then, selecting $h_{\Delta,n}^*$ in an optimal way, we derive that 
\begin{equation*}
\sup_{\mu \in \mathcal{M}  }\E_{\mu} \left[  \left|  \theta -\hat{\theta}_{h^*_{\Delta,n}, \Delta,n}  \right|^2 \right] =
O \left(r_{\Delta,n} \right)
\end{equation*}
with the rates $r_{\Delta,n}$ collected in the following table:
   \setlength{\tabcolsep}{9pt}
\renewcommand{\arraystretch}{2}
\begin{center}
\begin{tabular}{|l||l | c||l | c|}
\hline
\  &  \multicolumn{2}{c||} { {$\overline{C}_1<\infty$} } & \multicolumn{2}{c|}{  {$\overline{C}_1=\infty$} } \\
\hline
\hline
\multirow{3}{*} { {$\rho=0$} } & {$s>\Delta \beta +\frac{1}{2}$} & {$ T^{-1}$}  &  {$ s > \Delta \beta + 1$} & {$T^{-1}$} \\
\cline{2-5}
& {$s = \Delta  \beta +\frac{1}{2}$}  & {$ \left(\log T\right) T^{-1}$ } & {$s= \Delta \beta +1 $} & {$(\log T) T^{-1}$}\\
\cline{2-5}
& {$s  <  \Delta \beta +\frac{1}{2}$}  & {$  T^{-\frac{2s+2a}{2\Delta \beta +2a+1} }$ } & {$s<\Delta \beta +1$} & {$T^{-\frac{2a+2s}{2a+2\Delta \beta+2}}$}\\
\hline
{$\rho>0$}  & \  &  {$\left(\frac{\log T}{\Delta}\right)^{-\frac{2a+2s}{\rho}} $} & \  &
{$\left(\frac{\log T}{\Delta}\right)^{-\frac{2a+2s}{\rho}}$} \\
\hline
\end{tabular}
\end{center}
\end{theorem}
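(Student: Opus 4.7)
The proof applies the risk bound of Theorem~\ref{Hauptsatz_allgemeiner_Kernschaetzer_Levy}, which decomposes the MSE as $2|B(h)|^2+V(h)/T$ with $B(h):=\langle f,\mu\rangle-\langle f,\kf_h\ast\mu\rangle$, and then optimises the bandwidth $h$ in each of the four regimes (and $\rho\in\{0,(0,\infty)\}$).

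\emph{Bias.} Decompose $\mu=\mu|_D+\mu|_{D^c}$. Because $\supp f\subseteq[a,b]\subset D$, the summand $\mu|_{D^c}$ contributes nothing to $\langle f,\mu\rangle$, while its contribution to $\langle f,\kf_h\ast\mu\rangle$ involves $\kf$ only on $\{|z|\geq\dist([a,b],D^c)/h\}$; the polynomial decay $|\kf(z)|\lesssim(1+|z|)^{-(a+s)-1}$ makes this term negligible compared with $h^{a+s}$. For the $D$-part $\mu|_D=g_D\lambda|_D$ one writes
\begin{equation*}
\int f(x)\bigl(g_D(x)-(\kf_h\ast g_D)(x)\bigr)\d x=\int\kf(z)\int f(x)\bigl(g_D(x)-g_D(x-hz)\bigr)\d x\d z,
\end{equation*}
expands $g_D$ by Taylor's formula to order $\langle a\rangle$ around each $x\in[a,b]$ with Hölder remainder of size $L|hz|^a$, and uses the vanishing moments $\int z^m\kf(z)\d z=0$, $1\leq m<a+s$, to cancel the lower-order Taylor coefficients. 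The residual factor $h^s$ is produced from the pointwise decay of $\Fourier f$: by $|s|$ integrations by parts onto the $|s|$-times differentiable kernel when $s<0$, or by a Parseval argument applied to $\chi f$ for a smooth cutoff $\chi$ supported in $D$ extending $g_D$ to a globally Hölder function when $s\geq0$. Altogether $|B(h)|^2=O(h^{2(a+s)})$.

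\emph{Variance and balancing.} Substituting $|\Fourier f(-u)|\lesssim(1+|u|)^{-s}$ and $|\phi_\Delta(u)|^{-1}\lesssim(1+|u|)^{\Delta\beta}e^{c_\phi\Delta|u|^\rho}$ into the two Fourier integrals of Theorem~\ref{Hauptsatz_allgemeiner_Kernschaetzer_Levy} gives, when $\bar C_1<\infty$,
\begin{equation*}
V_1(h)\lesssim\int|\Fourier\kf(hu)|^2(1+|u|)^{2(\Delta\beta-s)}e^{2c_\phi\Delta|u|^\rho}\d u,
\end{equation*}
which for $\rho=0$ is finite on $\R$ exactly when $s>\Delta\beta+\tfrac12$ (variance $\asymp T^{-1}$), logarithmic at $s=\Delta\beta+\tfrac12$, and of order $h^{-(2(\Delta\beta-s)+1)}$ otherwise thanks to the effective frequency cutoff at $|u|\sim h^{-1}$ supplied by $\Fourier\kf(h\cdot)$; for $\rho>0$ the exponential dominates and $V_1(h)\asymp e^{2c_\phi\Delta h^{-\rho}}$ up to polynomial factors. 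The case $\bar C_1=\infty$ is handled with the $L^1$-square $V_2(h)$, which shifts every integrability threshold by $\tfrac12$ (so the parametric rate requires $s>\Delta\beta+1$). Equating $h^{2(a+s)}$ with $V(h)/T$ in each regime reproduces the table: e.g.\ for $\rho=0$, $\bar C_1<\infty$, $s<\Delta\beta+\tfrac12$, $h^\ast\asymp T^{-1/(2a+2\Delta\beta+1)}$ gives $T^{-(2a+2s)/(2a+2\Delta\beta+1)}$; for $\rho>0$, $h^\ast\asymp(\log T/\Delta)^{-1/\rho}$ gives $(\log T/\Delta)^{-(2a+2s)/\rho}$; the remaining entries are analogous.

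The main technical hurdle is the bias estimate, since the only available smoothness of $\mu$ is local on $D$: the boundary contribution from $\mu|_{D^c}$ must be neutralised by the polynomial decay of $\kf$, and the extra factor $h^s$ in the bias exponent has to be squeezed out of the pointwise decay of $\Fourier f$ by the localisation/integration-by-parts argument sketched above. By comparison, the variance bound is a routine power-counting exercise in each of the listed regimes, and the final bandwidth optimisation is mechanical.
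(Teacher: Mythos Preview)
Your overall architecture matches the paper exactly: invoke Theorem~\ref{Hauptsatz_allgemeiner_Kernschaetzer_Levy}, prove the bias is $O(h^{2(a+s)})$, plug the decay assumptions on $\Fourier f$ and $\phi_\Delta$ into the variance integrals, and balance. The variance computation and the bandwidth table are routine and you describe them correctly. You are also right that the bias estimate is the only substantive step, and your treatment of the $\mu|_{D^c}$ boundary piece via the decay hypothesis on $\kf^{(m)}$ is essentially the paper's argument.

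The gap is in the main bias term when $s>0$. You Taylor expand $g_D(x-hz)$ in $hz$ to order $\langle a\rangle$, use the vanishing moments to kill the polynomial part, and are left with a remainder of size $O(|hz|^{a})$. At that point you have only $O(h^{a})$, and your proposed device for recovering the missing factor $h^{s}$ does not work: the Taylor remainder involves $g_D^{(\langle a\rangle)}$ evaluated at an intermediate point depending on $(x,z)$, so it has essentially no smoothness in $x$, and neither an integration-by-parts nor a Parseval argument against $f$ can extract an extra $h^{s}$ from it. (A direct Parseval route after extending $g_D$ also falls short, because a compactly supported H\"older-$a$ function is not in $H^{a}$, only in $H^{a-\varepsilon}$.) For $s>0$ the moment condition goes up to order $\langle a+s\rangle>\langle a\rangle$, and those extra vanishing moments are simply never used in your scheme.

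The paper's Lemma~\ref{Lemma_Abschaetzung_Bias_allgemeiner_Kernschaetzer} resolves this by first extending $g_D$ to a compactly supported $g_1\in\mathcal H_{\R}(a,L',R')$ and then observing that the inner integral in your own display equals $(f\ast\tilde g_1)(hz)$ with $\tilde g_1(x)=g_1(-x)$. Since $|\Fourier f(-u)|\lesssim(1+|u|)^{-s}$ implies that $f$ is a compactly supported distribution of order $-s$, the convolution $f\ast\tilde g_1$ inherits $\langle a\rangle+s$ classical derivatives with the top one $(a-\langle a\rangle)$-H\"older. One then Taylor expands \emph{this} function of $hz$ to order $\langle a\rangle+s=\langle a+s\rangle$ and uses all of the vanishing moments of $\kf$ at once, obtaining $O(h^{a+s})$ directly. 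In short: Taylor expand $(f\ast\tilde g_1)$, not $g_D$.
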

\   \\
\textit{Diskussion:} For point estimation (s=0) or the estimation of the k-th derivative (s=-k), we recover the rates of convergence which are classical and known to be optimal  in a density deconvolution framework, see \citet{Fan}. 

In the continuous limit,  for $\Delta$ close to zero, we recover the rates which are typical for density estimation with pointwise loss.  The rates of convergence found for pointwise  loss should also be compared to the results found  in \citet{Belomestny_tcLevy}. 

When the estimation of  $\mu(A)= \int 1_A \mu(\d x)$ for some $A$ bounded away from zero is being considered, we have $s=1$.

For the particular cases of estimating integrals and point estimation, the rates of convergence  are known to be  minimax optimal, see  \citet{Kappus_Diss} for lower bound results. 
\section{Adaptive estimation}\label{Abschnitt_adaptive_Schaetzung}
Let a finite collection \mbox{$\mathcal{M}=\{m_1, \cdots, m_n\}\subseteq \mathbb{N}$} of  indices be given and let $\mathcal{H}:= \{1/m_1, \cdots, 1/m_n\}$ be a collection of bandwidths associated  with  $\mathcal{M}$.

For notational convenience, we suppress,  in this section, the dependence on  $\Delta$ and assume  $\Delta=1$. Moreover, we slightly change the notation and denote the kernel estimator by $\hat{\theta}_{m,n}$ instead of 
$\hat{\theta}_{1,1/m,n}$. 

The goal of this section is to provide a strategy for the optimal data driven choice of the smoothing parameter 
$\hat{m}$ within the collection
$\mathcal{M}_n$ and to derive, for the corresponding estimator $\hat{\theta}_{\hat{m}, n}$, the oracle  inequality 
\begin{equation}
\label{Orakelungleichung}
 \E \left[|\theta - \hat{\theta}_{\hat{m},n} |^2   \right] 
 \leq 
C^{ad} \inf_{m\in \mathcal{M}}\bigg\{ |\theta- \theta_{m_n}|^2   + \underset{k\in \mathcal{M}}{\sup_{ k\geq m}}|\theta_k - \theta_m|^2
+\pen(m)    \bigg\} + C n^{-1},
\end{equation}
with
\begin{equation*}
\label{Definition_Projektion_Funk_Levy}
\theta_m := \int f(x) \left( \kf_{\frac{1}{m}}\ast \mu\right)(x) \d x,
\end{equation*}
with positive constants $C^{ad}$ and $C$ which  do not depend on the unknown underlying smoothness parameters and a penalty term $\pen(m)$ to be
specified, which equals, up to some logarithmic factor, the quantity
\begin{equation*}
\frac{1}{n} \sigma_m^2 :=  \frac{n^{-1}}{2 \pi^2} \bigg\{C_1
\int \left|\frac{\Fourier f(-u)\Fourier \kf \left( u/m \right)}{\phi(u)}\right|^2  \d u 
\wedge C_2\left( \int   \left|\frac{\Fourier f(-u)\kf \left( u/m\right)}{\phi(u)}\right|  \d u \right)^2
\bigg\},
\end{equation*}
which bounds the error in the model.

The occurrence of the supremum  term in the   oracle-inequality  (\ref{Orakelungleichung}) is typical for  the adaptive estimation of linear functionals, see \citet*{Cai_Low_1} for lower bound results.\\[0,2cm]
In a deconvolution framework with known distribution of the noise, see \citet*{Butucea},  the way to go is to estimate the quantities $|\theta_k - \theta_m|^2$ involved in the oracle bound
by their bias-corrected version, that  is, to consider $|\hat{\theta}_k -\hat{\theta}_m|^2 - \strafH^2(m,k) $, with
some deterministic correction term $\strafH^2(m,k)$ which is chosen large enough 
 to ensure that with high probability,
\[
 |\hat{\theta}_k - \hat{\theta}_m|^2 - \strafH^2(m,k) \leq |\theta_k - \theta_m|^2  \   \forall m,k \in \mathcal{M}.
\]
On the other hand, $\strafH^2(m,k)$ should ideally not be much larger than the variance term.

The appropriate choice then turns out to be 
\[
\strafH^2(m,k):= \frac{1}{n} \rho \lambda_{m,k}^2 \left( \sigma_{m,k}^2  + x_{m,k}^2 \right)
\]
with some positive constant $\rho$ to be appropriately chosen and
\begin{eqnarray*}
  \sigma_{m,k}^2   & := &  \frac{1}{2\pi^2} \Bigg\{
C_1 \int \left| \frac{\Fourier f(-u)}{\phi(u)}\right|^2 \left| \kerndiffmk \right|^2 \d u  \\ 
& \phantom{:=} & \phantom{\frac{1}{2\pi^2} \Bigg\{}
 \wedge \,
C_2  \left( \int  \left| \frac{\Fourier f(-u)}{\phi(u)}\right| \left| \kerndiffmk \right| \d u \right)^2  \Bigg\}
\end{eqnarray*}
and
\begin{eqnarray*}
 x_{m,k}:=\frac{1}{\sqrt{n}}\frac{1}{2\pi}  \int \left|\frac{\Fourier f(-u)}{\phi(u)}  \right| \left|\kerndiffmk  \right| \d u
\end{eqnarray*}
and with logarithmic weights $\lambda_{m,k}$ chosen large enough to ensure  \hspace*{-0,5cm} $\sum_{k\in\mathcal{M}, k>m}\limits \hspace*{-0,5cm} e^{-\lambda_{m,k} }<\infty$.  See\\[-0,2cm] also 
 \citet*{Prieur} for the underlying ideas. 

Indeed, this is the fundamental idea about model selection via penalization: Deterministic terms are applied in order
to control the fluctuation of certain stochastic quantities, uniformly over some countable index set.  In a white noise framework, this principle is illustrated in  \citet{Birge}.

The situation is different in the present framework, since the optimal theoretical penalty is no longer feasible, but depends on the characteristic function in the denominator, which is unknown. 

It is intuitive to work with a stochastic penalty term and replace the unknown characteristic function by its empirical version. The model selection procedure will then crucially  depend on a precise control of the fluctuation of the empirical characteristic function in the denominator around its target.

In a similar setting, \citet*{Comte_Genon_1} have dealt with this problem by proposing an additional a priori assumption on the size of the collection $\mathcal{M}$.  However, this assumption is critical and highly restrictive, since it depends on the decay behaviour of the unknown characteristic function and hence involves some prior knowledge of the underlying smoothness parameters.  

\citet*{Comte_Lacour} have proposed another approach towards model selection with unknown variance, which does not depend on any
prior knowledge of the smoothness parameters. However, this approach is designed for $\lk^2$-loss and spectral cutoff estimation and
the generalization to the estimation of linear functionals  with general kernels is not straightforward. Moreover,  the strategy proposed in that paper would lead, in the present case, to a polynomial loss. For this reason we propose a different strategy, which will also allow to drop certain semi-parametric assumptions on $\phi$. 

In what follows, we introduce a  newly defined estimator of the characteristic function in the denominator. The fluctuation of this object can be controlled not only pointwise, but uniformly on the real line, which is the key to making the model selection procedure work under very weak assumptions. 
\begin{definition}
\label{Definition_Neumann_alternativ_2}
Let the weight function $w$ be 
\begin{equation*}
w(u) = \left( \log \left( e+|u| \right) \right)^{-1/2-\delta}
\end{equation*}
for some $\delta>0$. For a constant $\kappa$ to be chosen, let the truncated version of 
$\hat{\phi}_n(u)$ be 
\begin{equation*}
\label{Definition_phi_abgeschnitten}
{\check{\phi}}_n(u) := \begin{cases}
     \hat{\phi}_n(u), & \hspace*{-2cm}\textnormal{if}  \    |\hat{\phi}_n(u)| \geq \kappa (\log n)^{1/2} w(u)^{-1} n^{-1/2}   \\
    \kappa (\log n)^{1/2} w(u)^{-1} n^{-1/2}, & \textnormal{else}.
\end{cases}
\end{equation*}
Let the corresponding estimator of ${1}/{\phi}$ be  $ 1/\check{\phi}_n$.
\end{definition}
The definition of the weight function originates from \citet*{Neumann_Reiss} and the considerations presented therein will play an important role for our arguments. Introducing the extra  logarithmic factor in the definition of ${1}/{\check{\phi}}$   will enable us to apply concentration inequalities of Talagrand type.
This is  the key to proving 
 the following uniform version of the pointwise result which has been stated in Lemma 2.1 in  \citet{Neumann}:

\begin{lemma}
\label{Neumann_Lemma_uniform} Let $c_1$ be the constant appearing in Talagrand's inequality (see Lemma
\ref{Talagrand-Ungleichung}).
Let   $\kappa$ be  chosen such that for some
$\gamma >0$,  $\kappa \geq 2(\sqrt{2 c_1} +\gamma)$. Then we have for some constant $C\neukap$ depending on the choice of
 $\kappa, \gamma$ and
$\delta$,
\begin{eqnarray*}
\E \left[ \sup_{u\in \R}\frac{  \left|\phinneuu -\phinu  \right|^2  }{ \frac{(\log n) w(u)^{-2} n^{-1}}{|\phi(u)|^4 }
\wedge \frac{1}{|\phi(u)|^2}}    \right]
\leq C\neukap.
\end{eqnarray*}
\end{lemma}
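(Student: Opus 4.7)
My plan is to reduce everything to a uniform control of the empirical characteristic function, namely to the supremum $Z_n := \sup_{u \in \R} w(u)|\hat{\phi}_n(u) - \phi(u)|$. I introduce the "good event"
\[
\Omega_n := \left\{ Z_n \le \tfrac{\kappa}{2} \sqrt{\frac{\log n}{n}} \right\},
\]
on which I treat $|1/\check\phi_n(u)-1/\phi(u)|^2$ by a deterministic case split in $u$, and then deal with the complement $\Omega_n^c$ by a trivial bound compensated by a probability estimate obtained from Talagrand's inequality.

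\textbf{Deterministic analysis on $\Omega_n$.} Split according to whether $|\phi(u)| \ge \kappa\sqrt{\log n / n}\, w(u)^{-1}$ (\emph{Case~1}) or not (\emph{Case~2}). In Case~1, the bound $Z_n \le (\kappa/2)\sqrt{\log n/n}$ yields $|\hat\phi_n(u)| \ge |\phi(u)|/2 \ge \kappa \sqrt{\log n / n}\, w(u)^{-1}$, so the truncation does not activate and $\check\phi_n(u) = \hat\phi_n(u)$. The identity
\[
\frac{1}{\check{\phi}_n(u)} - \frac{1}{\phi(u)} = \frac{\phi(u) - \hat\phi_n(u)}{\hat\phi_n(u)\,\phi(u)}
\]
then gives $|1/\check\phi_n(u)-1/\phi(u)|^2 \le C\kappa^2 \,(\log n)\, w(u)^{-2} n^{-1}/|\phi(u)|^4$, matching the first term of the denominator in the theorem. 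In Case~2, by the truncation $|1/\check\phi_n(u)| \le \kappa^{-1} n^{1/2}(\log n)^{-1/2} w(u) \le |\phi(u)|^{-1}$, so $|1/\check\phi_n(u) - 1/\phi(u)|^2 \le 4/|\phi(u)|^2$, matching the second term. Thus on $\Omega_n$ the normalised ratio is bounded by a constant $C(\kappa)$, uniformly in $u$.

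\textbf{Control of the bad event.} On $\Omega_n^c$, I use the deterministic lower bound $|\check\phi_n(u)| \ge \kappa\sqrt{\log n/n}\, w(u)^{-1}$ to get $|1/\check\phi_n(u) - 1/\phi(u)|^2 \le C n w(u)^2/\log n + 1/|\phi(u)|^2$; after division by $(\log n) w(u)^{-2} n^{-1}/|\phi(u)|^4 \wedge 1/|\phi(u)|^2$, this produces at worst a polynomial factor in $n$. It therefore suffices to show that $\PP(\Omega_n^c)$ decays faster than any polynomial. This is the role of Talagrand's inequality applied to the centered, uniformly bounded (by $2w(0)$) empirical process indexed by the class $\mcF_n := \{z \mapsto w(u)\cos(uz), w(u)\sin(uz) : u \in \R\}$. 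A chaining / bracketing argument, exploiting the Lipschitz-in-$u$ regularity of $e^{iuz}$ on compacts and the slow decay of $w(u)=(\log(e+|u|))^{-1/2-\delta}$ (chosen precisely so that the Dudley entropy integral converges), yields
\[
\E[Z_n] \le \sqrt{\frac{2 c_1 \log n}{n}}\,(1+o(1)).
\]
Talagrand's inequality (Lemma~\ref{Talagrand-Ungleichung}) then concentrates $Z_n$ around its mean at scale $n^{-1/2}$, and the hypothesis $\kappa \ge 2(\sqrt{2 c_1} + \gamma)$ translates into $\PP(\Omega_n^c) \le C e^{-c\gamma^2 \log n} = C n^{-c\gamma^2}$, which by taking $\gamma$ large enough beats the polynomial loss from the trivial bound.

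\textbf{Main obstacle.} The genuinely delicate step is establishing the uniform mean bound $\E[Z_n] \lesssim \sqrt{\log n / n}$, because the index set $\R$ is unbounded and the family $\{e^{iu\cdot}\}_{u\in\R}$ has no compactness. The weight $w$ is calibrated exactly to make this finite: on dyadic blocks $\{|u|\asymp 2^j\}$ one controls oscillation by Lipschitz / metric entropy estimates, gaining a factor $(\log 2^j)^{-1/2-\delta}$ which makes the sum over $j$ summable. Once $\E[Z_n]$ is pinned down with the sharp constant $\sqrt{2c_1}$, the Talagrand step and the case analysis combine routinely to give the claimed uniform bound with constant $C_{\neukap}$ depending only on $\kappa,\gamma,\delta$.
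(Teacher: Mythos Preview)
Your overall architecture is the paper's: split into a good event $\Omega_n=\{Z_n\le (\kappa/2)\sqrt{\log n/n}\}$ with $Z_n=\sup_u w(u)|\hat\phi_n(u)-\phi(u)|$, do a deterministic case analysis on $\Omega_n$ (this is exactly Lemma~\ref{Abweichung_Bruch_emp_cf_uniform}), and kill $\Omega_n^c$ by a crude bound times $\PP(\Omega_n^c)$. The deterministic part is fine.

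The gap is in your control of $\PP(\Omega_n^c)$, and it comes from a misreading of where the constant $\sqrt{2c_1}$ originates. You claim a chaining bound $\E[Z_n]\le\sqrt{2c_1\log n/n}\,(1+o(1))$ and then say ``by taking $\gamma$ large enough'' one beats the polynomial loss. Both points fail. First, $\gamma$ is \emph{not} a free parameter: the hypothesis reads ``for some $\gamma>0$, $\kappa\ge 2(\sqrt{2c_1}+\gamma)$'', so $\gamma$ may be arbitrarily small. With your accounting the leftover deviation in Talagrand is only $\gamma\sqrt{\log n/n}$, yielding $\PP(\Omega_n^c)\lesssim n^{-\gamma^2/c_1}$, which does not dominate the trivial $n^2$ bound on $A_n(u)$ when $\gamma<\sqrt{2c_1}$. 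Second, the constant $\sqrt{2c_1}$ has nothing to do with the expectation of $Z_n$: it is the constant produced by the Gaussian branch of Talagrand when the \emph{entire} threshold $(\kappa/2)\sqrt{\log n/n}$ is used as the deviation $\kappa_n$, since $\exp(-n\kappa_n^2/c_1)=n^{-(\kappa/2)^2/c_1}$ and one needs $(\kappa/2)^2/c_1\ge 2$.

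The fix is to use the correct (and sharper) bound on the mean: the weight $w(u)=(\log(e+|u|))^{-1/2-\delta}$ is chosen so that the entropy integral is finite and $\E[Z_n]\le C\,n^{-1/2}$ with \emph{no} logarithmic factor (this is Theorem~4.1 in Neumann--Rei\ss, invoked in the paper's Lemma~\ref{Lemma_Abweichung_emp_cf_uniform}). Then set $\kappa_n=(\kappa/2)\sqrt{\log n/n}-(1+\eps)C\,n^{-1/2}$ in Talagrand; the lower-order term $Cn^{-1/2}$ is absorbed by any $\gamma>0$, and one obtains $\PP(\Omega_n^c)\le C_\kappa n^{-(\kappa/2-\gamma)^2/c_1}\le C_\kappa n^{-2}$ from $\kappa/2-\gamma\ge\sqrt{2c_1}$. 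This exactly cancels the $n^2$ coming from the crude bound on $A_n(u)$, and the proof closes with the stated dependence of $C\neukap$ on $\kappa,\gamma,\delta$.
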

The above definition gives rise to the following redefinition of the kernel estimator: In what follows, we set 
\begin{equation*}
\label{Definition_Kernschaetzer_neu_Levy}
\hat{\theta}_{m,n}:=\frac{1}{2\pi} \int \Fourier f(-u) \frac{\hat{\phi}_n'(u)}{i {\check{\phi}_n}(u)} \Fourier \kf \left(u/m \right) \d u.
\end{equation*}For $m,k \in \mathcal{M}$, we define the stochastic correction term 
\begin{equation*}
\label{Definition_Strafterm_alternativ}
 \widetilde{\strafH}^2(m,k):= n^{-1}   \left\{ c^{\pen} c_1 \tilde{\lambda}_{m,k}^2
       + 16 \left( \frac{5}{2}  \kappa \right)^2 (\log n)  \right\}  \left(  \tilde{\sigma}_{m,k}^2  \vee \tilde{x}^2_{m,k}  \right)
\end{equation*}
with
\begin{eqnarray*}
\label{Definition_sigma_stochastisch}
\notag &&\hspace*{-1,5cm}\tilde{ \sigma}_{m,k}^2 : =    \frac{1}{2\pi^2} \Bigg\{ \overline{C}_1 \int  \left|\frac{\Fourier f(-u)}{{\check{\phi}}_n(u) }\right|^2  \left|\Kerndifmk \right|^2 w(u)^{-2} \d u   \\
&& \hspace*{-1,5cm} \phantom{\tilde{\sigma}_{m,k}^2 =\frac{1}{2\pi^2} \Bigg\}  }
\wedge \, \overline{C}_2 \left(\int \left|\frac{\Fourier f(-u)}{{\check{\phi}}_n(u) }\right| \left|\Kerndifmk   \right| w(u)^{-1} \d u \right)^2 \hspace*{-0,1cm} \Bigg\}
\end{eqnarray*}
for positive constants $\overline{C}_1$ and $\overline{C}_2$ and with 
\begin{equation*}
\label{Definition_x_stochastisch}
\tilde{x}_{m,k} := \frac{1}{\sqrt{n} } \frac{1}{2 \pi}\int \left| \frac{\Fourier f(-u)}
{{\check{\phi}}_n(u)}\right| \left|\Kerndifmk \right| w(u)^{-1}
\d u.
\end{equation*}
Let the weights be defined as follows: For some $\eta>0$,  
\begin{eqnarray*}
\label{Definition_Gewichte_Levy}
\notag &&\hspace*{-0,8cm}\tilde{\lambda}_{m,k} :=  \frac{8}{\eta}  \log \left(   \log (n \tilde{x}_{m,k} (k-m) )\right)^2
\log( n \tilde{x}_{m,k} (k-m) )  \log \left(  \tilde{x}_{m,k}^2 (k-m)^2      \right)   \\
     &&\hspace*{-0,8cm}  \phantom{\lambda_{m,k}:=}             \vee   \log \left(\tilde{\sigma}_{m,k}^2 (k-m)^2 \right) .
\end{eqnarray*}
For some $\gamma>0$, let   $c^{\pen} =  64 \vee 16(2c_1+\gamma)$ and
and  $\kappa = 2(\sqrt{4 c_1}+\gamma)$. 
Finally, let
\begin{equation*}
\label{Definition_pen_alternativ}
 \tilde{\pen}(m):= \tilde{\strafH}^2(0,m).
\end{equation*}
We understand by $\pen(m)$ and $\strafH^2(m,k)$ the deterministic versions of $\tilde{\pen}(m)$ and $\tilde{\strafH}^2(m,k)$, that
is, the definitions are the same as in formula (\ref{Definition_Strafterm_alternativ}) and formula
(\ref{Definition_pen_alternativ}), apart from the fact that $
{1}/{\check{\phi}_n}$ is replaced by ${1}/{\phi}$.

These definitions give rise to the following choice of the cutoff parameter:
\begin{equation*}
 \label{empirischer_cutoff_endgueltig}
 \hat{m}  :=  \arginf_{m\in \mathcal{M}}  \limits  {\sup_{k>m,k\in \mathcal{M}}}\left\{ |\hat{\theta}_k -\hat{\theta}_m|^2 -\tilde{\strafH}^2(m,k)  \right\}  + \tilde{\pen}(m).
\end{equation*}
We are now ready to state the following oracle bound and hence the main result of this section: 
\begin{theorem}
\label{Hauptsatz_adaptiver_Schaetzer_Levy} Assume that
 (A1)- (A4),  (F1) or (F2) and (K1)+(K2) hold.  Assume that $C_{1}\leq \bar{C}_1$ and $C_2\leq \bar{C}_2$.
Assume, moreover, that $\E\left[\exp\left(\eta |X_1|\right)   \right]  <\infty$.   Then 
\begin{eqnarray*}
\E \left[|\theta- \hat{{\theta}}_{\hat{m}}  |^2  \right] \leq  C^{ad} \inf_{m\in \mathcal{M}}\limits
\big\{ |\theta -\theta_{m_n}|^2  +
\underset{k\in \mathcal{M}}{ \sup_{k> m}} |\theta_k-\theta_m|^2  +\pen(m)    \big\}   + C n^{-1}   
\end{eqnarray*}
holds with some  $C^{ad}>0$  depending   on the particular choices of the constants, but not on the unknown parameters, and with some  
$C>0$ depending on the choice of the constants and on $\eta^{-1}\E\left[\exp\left(\eta |X_1|\right)\right]$.
\end{theorem}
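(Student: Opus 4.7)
The argument follows the Birg\'e--Massart model selection scheme, adapted to the situation where the penalty $\tilde{\pen}(m)$ and the correction $\tilde{\strafH}^2(m,k)$ are stochastic because they depend on the truncated empirical characteristic function $\check{\phi}_n$ rather than on $\phi$. For any fixed $m\in\mathcal{M}$, setting $k^{*}:=m\vee\hat{m}$, I combine the trivial decomposition $|\theta-\hat{\theta}_{\hat{m}}|^2\lesssim |\theta-\theta_{m_n}|^2+|\theta_{m_n}-\theta_{k^{*}}|^2+|\theta_{k^{*}}-\hat{\theta}_{k^{*}}|^2+|\hat{\theta}_{k^{*}}-\hat{\theta}_{\hat{m}}|^2$ with the defining property of $\hat{m}$, which, applied both to $\hat{m}$ and to $m$, yields
\begin{equation*}
|\hat{\theta}_{k^{*}}-\hat{\theta}_{\hat{m}}|^2 \leq \tilde{\strafH}^2(\hat{m},k^{*})+\tilde{\pen}(m)-\tilde{\pen}(\hat{m})+\sup_{k>m,\,k\in\mathcal{M}}\bigl\{|\hat{\theta}_k-\hat{\theta}_m|^2-\tilde{\strafH}^2(m,k)\bigr\}
\end{equation*}
and an analogous bound for $|\hat{\theta}_{k^{*}}-\hat{\theta}_m|^2$. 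Taking expectations reduces the problem to controlling three quantities: (i) the fluctuation of $\hat{\theta}_{m_n}-\theta_{m_n}$; (ii) the positive part of $\sup_{k>m,\,k\in\mathcal{M}}\{|\hat{\theta}_k-\hat{\theta}_m|^2-|\theta_k-\theta_m|^2-\tilde{\strafH}^2(m,k)\}$; and (iii) the discrepancy between $\tilde{\pen},\tilde{\strafH}^2$ and their deterministic counterparts $\pen,\strafH^2$.

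Second, the term (ii) is the technical heart of the argument. In the Fourier domain, the difference decomposes as
\begin{equation*}
(\hat{\theta}_k-\theta_k)-(\hat{\theta}_m-\theta_m) = \frac{1}{2\pi}\int \Fourier f(-u)\bigl(\Kerndifmk\bigr)\biggl(\frac{\hat{\phi}'_n(u)-\phi'(u)}{i\check{\phi}_n(u)}+\phi'(u)\Bigl[\tfrac{1}{i\check{\phi}_n(u)}-\tfrac{1}{i\phi(u)}\Bigr]\biggr)\,\d u .
\end{equation*}
The sample splitting built into the definitions of $\hat{\phi}_n$ and $\hat{\phi}'_n$ implies that, conditionally on the first sample, $\check{\phi}_n$ is deterministic while the first summand is a centred empirical process in the independent second sample, indexed by $k>m$. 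I would apply Talagrand's inequality to this process at the level $\tilde{\strafH}^2(m,k)$; the logarithmic weight $\tilde{\lambda}_{m,k}$ is precisely calibrated so that the resulting exponential tails yield a summable series $\sum_{k>m}\exp(-\tilde{\lambda}_{m,k})$, and the supremum over $k>m$ is thereby controlled in expectation at level $O(n^{-1})$. The exponential moment assumption $\E[\exp(\eta|X_1|)]<\infty$ is used here to provide an envelope $\|Z_{\Delta,\cdot}e^{iuZ_{\Delta,\cdot}}\|_\infty\lesssim \eta^{-1}\log n$ after a mild truncation, with a negligible remainder.

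Third, for the second summand of the display above and for (iii), I invoke Lemma \ref{Neumann_Lemma_uniform}, which yields uniform-in-$u$ control of $1/\check{\phi}_n(u)-1/\phi(u)$ weighted by $w(u)$. Plugged into the integrals defining $\tilde{\sigma}^2_{m,k}$ and $\tilde{x}_{m,k}$, this shows that $\tilde{\strafH}^2(m,k)$ is comparable to $\strafH^2(m,k)$ up to a universal multiplicative constant on a high-probability event. On the complementary event, the crude truncation bound $|1/\check{\phi}_n(u)|\leq \kappa^{-1}(\log n)^{-1/2}w(u)n^{1/2}$ built into the definition of $\check{\phi}_n$ keeps the contribution to the expected risk of order $n^{-1}$. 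Combined with Theorem \ref{Hauptsatz_allgemeiner_Kernschaetzer_Levy} to bound (i), the oracle inequality follows.

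The main obstacle is the calibration of the weight $w(u)=(\log(e+|u|))^{-1/2-\delta}$ and the logarithmic factors in $\tilde{\lambda}_{m,k}$ so that two competing requirements hold simultaneously: the weight must be small enough that the Talagrand deviation is \emph{uniform} in $u\in\R$ and can be dominated by the target $\tilde{\strafH}^2(m,k)$, yet large enough that $\tilde{\pen}(m)$ does not inflate beyond a poly-logarithmic multiple of the deterministic variance $n^{-1}\sigma_m^2$. It is precisely the improvement from the pointwise concentration of \citet{Neumann} to the uniform Lemma \ref{Neumann_Lemma_uniform} that permits this delicate balance, and thereby avoids both the polynomial loss in \citet{Comte_Lacour} and the restrictive a priori conditions on $\mathcal{M}$ imposed in \citet{Comte_Genon_1}.
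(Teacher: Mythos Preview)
Your plan is essentially the paper's, and all the structural ingredients you list are correct: the Fourier decomposition of $(\hat\theta_k-\hat\theta_m)-(\theta_k-\theta_m)$ into a numerator piece and a denominator piece, the conditioning on the first subsample so that $\check\phi_n$ becomes deterministic, the truncation of $Z_j$ at level $O(\eta^{-1}\log n)$ to obtain an envelope, and the use of Lemma~\ref{Neumann_Lemma_uniform} to compare $\tilde\pen,\tilde\strafH^2$ with $\pen,\strafH^2$. The paper organises the outer argument via the case split $\{\hat m\le m^*\}$ versus $\{\hat m>m^*\}$ rather than through $k^*=m\vee\hat m$, but this is cosmetic. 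One small point: for the numerator piece at fixed $k$ there is no supremum, so the paper applies the scalar Bernstein inequality rather than Talagrand; your summation over $k$ using the calibration of $\tilde\lambda_{m,k}$ is exactly what the paper does.

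There is, however, one genuine omission in your treatment of the denominator piece $\phi'(u)\bigl[1/\check\phi_n(u)-1/\phi(u)\bigr]$. You propose to control it via a single favourable event from Lemma~\ref{Neumann_Lemma_uniform} and absorb the complement via the crude bound $|1/\check\phi_n|\le\kappa^{-1}(\log n)^{-1/2}w\,n^{1/2}$. But on that complement the crude bound for the $k$-th term is of order $n\cdot x_{f_{m,k}}^2$, and since the probability of the complement is a fixed power $n^{-p}$ independent of $k$, summing (or taking the sup) over $k$ does not in general stay $O(n^{-1})$; under (F2) the quantities $x_{f_{m,k}}^2$ can grow with $k$. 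The paper resolves this by introducing a $k$-dependent favourable set $C(m,k)$ whose complement has probability $\lesssim n^{-2}x_{f_{m,k}}^{-2}(k-m)^{-2}$, obtained from an extension of the basic concentration in which the threshold is augmented by an extra $\lambda_{f_{m,k}}=\sqrt{c_1}\log\bigl(x_{f_{m,k}}^2(k-m)^2\bigr)$ (Lemma~\ref{Lemma_Abweichung_Erweiterung} and Corollary~\ref{Lemma_Abweichung_emp_cf_uniform_Erweiterung}). This is precisely what makes the contribution of the complement summable in $k$ and is the missing step in your outline.
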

It is worth mentioning that the exponential moment assumption can be relaxed, but at the cost of losing a polynomial factor and complicating the proofs. For sake of simplicity, we omit the details. 

In comparison with the adaptive results obtained in \citet*{Comte_Lacour} or \citet*{Schwarz_Johannes}, it is remarkable that our procedure is completely model free in the sense that it no longer depends on any semiparametric assumption on the decay of the characteristic function, be it exponential or polynomial decay or the assumption that $\phi$ can, up to some constant, be bounded from above and below by some monotoneously decreasing function. Moreover, apart from the condition that the linear functional is well defined, the measure $\mu$ may be fairly arbitrary and is not assumed to belong to any prescribed nonparametric class. 

From an asymptotic point of view, the above results tells us that the procedure attains, up to a logarithmic loss, the minimax rates of convergence for the particular cases of point estimation, estimating derivatives and for estimating integrals. 

It is well known that for estimating linear functionals, the loss of a logarithmic factor due to adaptation can typically not be avoided, see \citet{Lepski_3}. Consequently, it is not surprising that a logarithmic loss is also found in the L\'evy model. 
\section{Proofs}
\label{Beweisteil}
\subsection{Proofs of the main results of Section \ref{Abschnitt_Schaetzer_und_Raten}}
\subsubsection{Risk bounds}
The proof of Theorem \ref{Hauptsatz_allgemeiner_Kernschaetzer_Levy} essentially relies on the following auxiliary result:
\begin{lemma}
\label{Hauptabschaetzung} \
For some universal positive constant $C$,
\begin{align*}
\notag \phantom{=} &\,  \left| \E\left[ \left( \frac{\frac{1}{\Delta} \hat{\phi}_{{\Delta,n}}'(u)}{\tilde{\phi}_{{\Delta,n}}(u)}- \frac{\frac{1}{\Delta}\phi_{\Delta}'(u)}{\phi_{\Delta}(u)}\right) 
\left( \frac{\frac{1}{\Delta}\hat{\phi}_{{\Delta,n}}'(\text{-}v)}{\tilde{\phi}_{{\Delta,n}}(\text{-}v)}-
\frac{\frac{1}{\Delta}\phi_{\Delta}'(\text{-}v)}{\phi_{\Delta}(\text{-}v)}\right) \right] \right|\\
   \leq &\,   C  \left( \frac{T^{\text{-}1}}{|\phi_{\Delta}(u)\phi_{\Delta}(\text{-}v)|}\wedge 1 \right)
\left( |\Psi''(u-v)|+ |\Psi'(u-v)|^2+ |\Psi'(u)\Psi'(\text{-}v)|\right).
\end{align*}
\end{lemma}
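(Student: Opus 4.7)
The strategy combines a bias--variance decomposition of the ratio estimator, the independence of $\hat\phi'_{\Delta,n}$ and $\tilde\phi_{\Delta,n}$ secured by the sample splitting, and the pointwise deviation bound for $1/\tilde\phi_{\Delta,n}$ essentially going back to \citet{Neumann}. For each frequency $w\in\{u,-v\}$ I would start from the identity
\begin{equation*}
\frac{\tfrac{1}{\Delta}\hat\phi'_{\Delta,n}(w)}{\tilde\phi_{\Delta,n}(w)}-\frac{\tfrac{1}{\Delta}\phi'_{\Delta}(w)}{\phi_{\Delta}(w)}
=\frac{\tfrac{1}{\Delta}\bigl(\hat\phi'_{\Delta,n}(w)-\phi'_{\Delta}(w)\bigr)}{\tilde\phi_{\Delta,n}(w)}
+\frac{\phi'_{\Delta}(w)}{\Delta}\left(\frac{1}{\tilde\phi_{\Delta,n}(w)}-\frac{1}{\phi_{\Delta}(w)}\right),
\end{equation*}
so that the product whose expectation must be bounded splits into four summands.

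Sample splitting makes $\hat\phi'_{\Delta,n}$ independent of $\tilde\phi_{\Delta,n}$; combined with unbiasedness $\E[\hat\phi'_{\Delta,n}(w)]=\phi'_{\Delta}(w)$ this causes the two mixed summands (a centered $\hat\phi'$-factor at one frequency times a truncation-error factor at the other) to vanish in expectation. The pure-variance summand factorises, again by independence, as
\begin{equation*}
\E\bigl[\tfrac{1}{\Delta^2}(\hat\phi'_{\Delta,n}(u)-\phi'_\Delta(u))(\hat\phi'_{\Delta,n}(-v)-\phi'_\Delta(-v))\bigr]\cdot
\E\bigl[1/(\tilde\phi_{\Delta,n}(u)\tilde\phi_{\Delta,n}(-v))\bigr].
\end{equation*}
Using $\E[iZ_1 e^{iuZ_1}\cdot iZ_1 e^{-ivZ_1}]=\phi''_\Delta(u-v)$ and the i.i.d.\ structure of the second sample, the first factor equals $\tfrac{1}{n\Delta^2}\bigl(\phi''_\Delta(u-v)-\phi'_\Delta(u)\phi'_\Delta(-v)\bigr)$. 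Substituting $\phi_\Delta=e^{\Delta\Psi}$, $\phi'_\Delta=\Delta\Psi'\phi_\Delta$ and $\phi''_\Delta=(\Delta\Psi''+\Delta^2(\Psi')^2)\phi_\Delta$, and using $|\phi_\Delta|\le 1$, this factor is controlled in modulus by $C T^{-1}\bigl(|\Psi''(u-v)|+|\Psi'(u-v)|^2+|\Psi'(u)\Psi'(-v)|\bigr)$. The second factor is bounded by $C\bigl(|\phi_\Delta(u)\phi_\Delta(-v)|^{-1}\wedge T\bigr)$, the sharp branch coming from Cauchy--Schwarz together with the consequence $\E[|1/\tilde\phi_{\Delta,n}(w)|^2]\le C|\phi_\Delta(w)|^{-2}$ of Neumann's estimate, and the trivial branch from the deterministic ceiling $|1/\tilde\phi_{\Delta,n}|\le T^{1/2}$ forced by the truncation. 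Multiplying the two factors produces exactly the prefactor $T^{-1}/|\phi_\Delta(u)\phi_\Delta(-v)|\wedge 1$ multiplying the three $\Psi$-terms.

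The remaining summand, $\tfrac{1}{\Delta^2}\phi'_\Delta(u)\phi'_\Delta(-v)\,\E[(1/\tilde\phi_{\Delta,n}(u)-1/\phi_\Delta(u))(1/\tilde\phi_{\Delta,n}(-v)-1/\phi_\Delta(-v))]$, is dealt with by Cauchy--Schwarz and Neumann's pointwise bound $\E[|1/\tilde\phi_{\Delta,n}(w)-1/\phi_\Delta(w)|^2]\le C\bigl(T^{-1}|\phi_\Delta(w)|^{-4}\wedge|\phi_\Delta(w)|^{-2}\bigr)$. Writing $|\phi'_\Delta(u)\phi'_\Delta(-v)|/\Delta^2\le|\Psi'(u)\Psi'(-v)|\cdot|\phi_\Delta(u)\phi_\Delta(-v)|$ reproduces exactly the $|\Psi'(u)\Psi'(-v)|$ contribution with the correct prefactor; adding the four contributions gives the claim. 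The main obstacle is to keep the sharp minimum structure $T^{-1}/|\phi_\Delta(u)\phi_\Delta(-v)|\wedge 1$ while simultaneously producing the three-term $\Psi$-decomposition: in the regime $|\phi_\Delta|\gtrsim T^{-1/2}$ the Neumann estimate is sharp, whereas in the complementary regime one must fall back on the deterministic truncation ceiling, and one has to verify that the $|\Psi''(u-v)|$, $|\Psi'(u-v)|^2$ and $|\Psi'(u)\Psi'(-v)|$ contributions match up in both regimes. Exploiting $|\phi_\Delta(u-v)|\le 1$ to tame the ``deconvolutional'' ratio $\phi_\Delta(u-v)/(\phi_\Delta(u)\phi_\Delta(-v))$ that would otherwise appear in the $\Psi''$-term is the key technical step.
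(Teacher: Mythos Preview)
Your proposal is correct and follows essentially the same route as the paper: the same additive decomposition of the ratio, the same use of sample-splitting independence to eliminate the cross terms, the same identification of the covariance as $n^{-1}(\phi''_\Delta(u-v)-\phi'_\Delta(u)\phi'_\Delta(-v))$ together with $\phi_\Delta=e^{\Delta\Psi}$ and $|\phi_\Delta|\le 1$, and the same Cauchy--Schwarz plus Neumann bound for the truncation-error product. The only cosmetic difference is that for $\E[\,|\tilde\phi_{\Delta,n}(u)\tilde\phi_{\Delta,n}(-v)|^{-1}\,]$ the paper expands $1/\tilde\phi_{\Delta,n}=R_{\Delta,n}+1/\phi_\Delta$ and bounds the four pieces via the first-moment Neumann estimate, whereas you use Cauchy--Schwarz and the second-moment Neumann estimate; both arguments yield the bound $C\bigl(|\phi_\Delta(u)\phi_\Delta(-v)|^{-1}\wedge T\bigr)$.
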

\begin{proof}
We start by noticing that  for some constant $C\neu_k$ depending only on $k$,
\begin{equation}
\label{Neumanns_Lemma_Delta}
\E \left[ \left|  \frac{1}{\tilde{\phi}_{{\Delta,n}}(u) }- \frac{1}{\phi_{\Delta}(u)}  \right|^{k} \right]
\leq C\neu_k \left( \frac{T^{-\frac{k}{2}}}{|\phi_{\Delta}(u)|^{2k}} \wedge \frac{1}{|\phi_{\Delta}(u)|^k} \right).
\end{equation}
 This is a direct consequence of Lemma 2.1  in \cite{Neumann}, tracing  back the dependence \mbox{on $\Delta$}.

In the sequel, let $\text{R}_{{\Delta, n}}(u):= \frac{1}{\tilde{\phi}_{\Delta,n}(u) }- \frac{1}{\phi(u)}$. Using the fact that $\hat{\phi}_{{\Delta, n}}'$ and ${1}/{\tilde{\phi}_{{\Delta,n}}}$ are independent by construction and that 
$\hat{\phi}_{{\Delta,n}}'(u)- \phi_{\Delta}'(u)$
is  centred, we  find that
\begin{align*}
\phantom{=} & \E\left[ \hspace*{-0,5mm}\left(\frac{\hat{\phi}_{{\Delta,n}}'(u)}{\tilde{\phi}_{{\Delta,n}}(u)} -\frac{\phi_{\Delta}'(u)}{\phi_{\Delta}(u)} \right)\hspace*{-0,9mm}
\left(\frac{\hat{\phi}_{{\Delta,n}}'(\text{-}v)}{\tilde{\phi}_{{\Delta,n}}(\text{-}v)} -
\frac{\phi_{\Delta}'(\text{-}v)}{\phi_{\Delta}(\text{-}v)} \right) \hspace*{-0,5mm}  \right]  \\
 {=} &  \E \left[ \hspace*{-0,5mm}\left(  \frac{ (\hat{\phi}_{{\Delta,n}}'(u) - \phi_{\Delta}'(u))}
{\tilde{\phi}_{{\Delta, n}}(u)} +  \phi_{\Delta}'(u) \text{R}_{\Delta,n}(u)  \right) \hspace*{-0,9mm}
 \left(  \frac{ (\hat{\phi}_{{\Delta,n}}'(\text{-}v) - \phi_{\Delta}'(\text{-}v))}
{\tilde{\phi}_{{\Delta, n}}(\text{-}v)} +  \phi_{\Delta}'(\text{-}v)\text{R}_{\Delta,n}(\text{-}v) \right)  \hspace*{-0,5mm} \right]\\
=&\Cov(\hat{\phi}_{{\Delta,n}}'(u),\hat{\phi}_{{\Delta,n}}'(v) ) 
\E\hspace*{-0,05cm}\left[\frac{1}{\tilde{\phi}_{{\Delta,n}}(u)  \tilde{\phi}_{{\Delta,n}}(\text{-}v)}  \right] + \phi_{\Delta}'(u)\phi_{\Delta}'(\text{-}v) \E\left[\text{R}_{{\Delta, n}}(u)\text{R}_{{\Delta, n}}(\text{-}v)  \right].
\end{align*}
The Cauchy-Schwarz-inequality and then an application of (\ref{Neumanns_Lemma_Delta}) imply
\begin{align}
\notag    \E\left[\left|\text{R}_{{\Delta, n}}(u) \text{R}_{{\Delta, n}}(\text{-}v) \right|\right]  
 \leq &\,  \left( \E\left[ \left| \text{R}_{{\Delta, n}}(u) \right|^2 \right] \right)^{\frac{1}{2} }  \left( \E\left[\left|\text{R}_{{\Delta, n}}(\text{-}v)  \right|^2  \right] \right)^{\frac{1}{2}}\\
\label{LOS_DZ_3}  \leq &\,   C\neu_2 \left(\frac{ T^{-1} }{|\phi_{\Delta}(u)|^2 |\phi_{\Delta}(\text{-}v)|^2}  
\wedge \frac{1}{|\phi_{\Delta}(u)||\phi_{\Delta}(\text{-}v)|} \right).
\end{align}
Using the triangle inequality, again (\ref{Neumanns_Lemma_Delta}) and then (\ref{LOS_DZ_3}), we find that
\begin{align*}
 \phantom{\leq}&\,  \E \left[ \left|\frac{1}{\tilde{\phi}_{{\Delta, n}}(u) \tilde{\phi}_{{\Delta, n}}(\text{-}v)}  \right| \right] \leq  \E \left[\left(|\text{R}_{{\Delta, n}}(u)|+\frac{1}{|\phi_{\Delta}(u)|}\right)
 \left(|\text{R}_{{\Delta, n}}(\text{-}v)|+\frac{1}{|\phi_{\Delta}(\text{-}v)|} \right)   \right]\\
\leq &\,  (1+ 2 C\neu_1 + C\neu_2 ) \frac{1}{|\phi_{\Delta}(u)||\phi_{\Delta}(\text{-}v)|}.
\end{align*}
On the other hand, by definition of $\frac{1}{\tilde{\phi}_n}$, 
\begin{equation*}
 \E \left[\left|\frac{1}{\tilde{\phi}_{{\Delta, n}}(u) \tilde{\phi}_{{\Delta, n}}(\text{-}v)}  \right| \right] \leq (\Delta n)= T.
\end{equation*}
Next, 
\begin{align}
\notag  \phantom{=} &\, \left|  \Cov (\hat{\phi}_{{\Delta, n}}'(u) , \hat{\phi}_{{\Delta, n}}'(v))  \right| 
 =  n^{\text{-}1} \left|\E \left[ (iZ_{\Delta})^2 e^{i(u-v) Z_{\Delta}}\right] - \E \left[iZ_{\Delta} e^{iuZ_{\Delta}} \right]\E \left[iZ_{\Delta} e^{\text{-}ivZ_{\Delta}} \right]\right|  \\
\notag   =&\,  n^{\text{-}1} \left| \phi_{\Delta}''(u-v) - \phi_{\Delta}'(u) \phi_{\Delta}'(\text{-}v)  \right| \\
 \label{tagtag}\leq &\,   n^{\text{-}1} \left( |\Delta \Psi''(u-v)| +|\Delta \Psi'(u-v)|^2 +\Delta^2|\Psi'(u) \Psi'(\text{-}v)| \right).
\end{align}
Putting (\ref{LOS_DZ_3})-(\ref{tagtag}) together, we have 
shown
\begin{align*}
\notag 
\phantom{\leq} &\,   \left|\Cov\Big(\hat{\phi}_{{\Delta, n}}'(u),\hat{\phi}_{{\Delta, n}}'(v)\Big)\right|
 \left| \E \left[\frac{1}{\tilde{\phi}_{{\Delta, n}}(u) \tilde{\phi}_{{\Delta, n}}(\text{-}v)}   \right] \right| \\
\label{Lemma_obere_Schranken_siebte_Zerlegung}\leq &\,   C' \left(  \frac{ T^{-1}}{|\phi_{\Delta}(u)||\phi_{\Delta}(\text{-}v)|}\wedge 1 \right) \Delta^2\left(\left| \Psi''(u-v)\right| +| \Psi'(u-v)|^2  +\left| \Psi'(u) \Psi'(\text{-}v)\right|\right). 
\end{align*}
With constant $C':= 1+2C\neu_2+C\neu_1$.

Another  application of   (\ref{LOS_DZ_3}), gives 
\begin{align}
\notag \phantom{\leq} &\,    \left|\E\left[\text{R}_{{\Delta, n}}(u)
 \text{R}_{{\Delta, n}}(\text{-}v)  \right]\right|| \phi_{\Delta}'(u) \phi_{\Delta}'(\text{-}v)|   =   \left|\E\left[\text{R}_{{\Delta, n}}(u)
 \text{R}_{{\Delta, n}}(\text{-}v)  \right]\right|\Delta^2 |\phi(u)\phi(\text{-}v ) 
\Psi'(u) \Psi'(\text{-}v)|\\
\notag \leq&\,   C\neu_2 \left(\frac{T^{-1}}{|\phi_{\Delta}(u)||\phi_{\Delta}(\text{-}v)|} \wedge 1 \right)
\Delta^2 \left|\Psi'(u)  \right|  \left|\Psi'(\text{-}v)\right|.
\end{align}
This completes the proof.
\end{proof}
\begin{proof}[\textbf{Proof of Theorem  \ref{Hauptsatz_allgemeiner_Kernschaetzer_Levy} }]   Given the kernel function $\kf$ and bandwidth $h$, let  \mbox{$\theta_{h}:= \int f(x) (\kf_h\ast \mu)(x) \d x$}.  We can estimate
\begin{eqnarray}
\notag &\phantom{\leq}&    \E \left[\left|\theta - \hat{\theta}_{\Delta, h,n}    \right|^2  \right] \leq  2 \left|\theta - \theta_h  \right|^2  + 2\E \left[\left|\theta_h - \hat{\theta}_{\Delta, h,n}  \right|^2   \right].
\end{eqnarray}
In what follows, we use the notation
\[
T(u,v):=   \left( \frac{\hat{\phi}'_{{\Delta, n}}(u) }{\tilde{\phi}_{{\Delta, n}}(u)} - \frac{\phi'_{\Delta}(u)}{\phi_{\Delta}(u)}\right)
\left(\frac{\hat{\phi}'_{{\Delta, n}}(\text{-}v)}{\tilde{\phi}_{{\Delta, n}}(\text{-}v) }- \frac{\phi'_{\Delta}(\text{-}v)}
{\phi_{\Delta}(\text{-}v)} \right). 
\]
Passing to the Fourier domain and applying Fubini's theorem yields 
\begin{align}
\notag \phantom{=} &  \E \left[ \left|\theta -\hat{\theta}_{\Delta, h,n} \right|^2\right] =\E \left[ \left|\frac{1}{2\pi}\int \Fourier f(-u) \Fourier \kf_h(u) \frac{1}{\Delta} \left(\frac{\hat{\phi}'_{{\Delta,n}}(u)}{\tilde{\phi}_{{\Delta,n}}(u)}-
\frac{\phi'_{\Delta}(u)}{\phi_{\Delta}(u)} \right) \d u  \right|^2  \right]\\
\notag =& \frac{1}{4\pi^2\Delta^2}\int \int \Fourier f(\text{-}u) \Fourier f(v) \Fourier \kf_h(u) \Fourier \kf_h(\text{-}v) \E \left[ T(u,v)   \right]\d u \d v.
\end{align}
Thanks to Lemma  \ref{Hauptabschaetzung},
\begin{align*}
 \phantom{\leq } &\,  \int \int   \Fourier f(\text{-}u) \Fourier f(v) \Fourier \kf_h(u) \Fourier \kf_h(-v)   \frac{1}{\Delta^2}
\E \left[ T(u,v)  \right] \d u \d v  \\
\notag \leq &\,  C T^{-1} \Bigg( \int \int\hspace*{-0,1cm} \frac{ \left|\Fourier f(\text{-}u) \Fourier f(v)  \right|}{|\phi_{\Delta}(u)\phi_{\Delta}(\text{-}v)|}
\left|\kf_h(u) \kf_h(\text{-}v)  \right|
 \left(\left| \Psi''(u-v) \right|\hspace{-0,05cm}+\hspace{-0,05cm}\left| \Psi'(u-v) \right|^2 \right)\hspace{-0,05cm}\d u \d v \\
\phantom{\leq}&\phantom{C T^{\text{-}1}\Bigg(    }+\int \int  \frac{|\Fourier f(\text{-}u)\Fourier f(v)|}{ |\phi_{\Delta}(u)\phi_{\Delta}(\text{-}v)|} \left|\kf_h(u) \kf_h(\text{-}v)  \right|
\left| \Psi'(u)  \Psi'(\text{-}v) \right|   \d u \d v\Bigg).
\end{align*}

If   $\Psi'' \in  \lk^1(\R)$ and $\Psi' \in \lk^2(\R)$, we apply the Cauchy-Schwarz inequality and Fubini's theorem to
find 
\begin{align}
\notag \phantom{\leq}&
\int \int \frac{ \left|\Fourier f(\text{-}u)  \Fourier f(v)  \right|}{|\phi_{\Delta}(u)\phi_{\Delta}(\text{-}v)|}  \left|\kf_h(u)  \kf_h(\text{-}v)  \right|
\left(  \left| \Psi''(u-v) \right| +\left|\Psi'(u-v)\right|^2 \right) \d u \d v  \\
\notag \leq  & \int \frac{\left| \Fourier f(\text{-}u) \right|^2}{|\phi_{\Delta}(u)|^2} \left|\Fourier \kf_h(u)  \right|^{2}
\int  (|\Psi''(u-v)| +|\Psi'(u-v)|^2 ) \d v \d u\\
 \notag  =& \left( \|\Psi''\|_{\lk^1}+\|\Psi'\|_{\lk^2}^2  \right)
\int  \frac{\left| \Fourier f(\text{-}u) \right|^2}{|\phi_{\Delta}(u)|^2} \left|\Fourier \kf_h(u)  \right|^{2}  \d u
\end{align}
and 
\begin{align*}
\int \int \frac{|\Fourier f(\text{-}u)\Fourier f(v) |}{|\phi_{\Delta}(u)\phi_{\Delta}(\text{-}v)|}
\left|\kf_h(u)  \kf_h(\text{-}v)  \Psi'(u)\Psi'(\text{-}v)\right|  \d u \d v 
   \leq    \|\Psi'\|_{\lk^2}^2  \int \frac{|\Fourier f(\text{-}u)\Fourier \kf_h(u)|^2}{|\phi_{\Delta}(u)|^2}  \d u.
\end{align*}
On the other hand, if $\Psi''\in \lk^1(\R)$ or $\Psi'\in \lk^2(\R)$ fails to hold,  we can always use the  estimate
\begin{align}
 \notag \phantom{\leq}&\,  
\int \int \frac{ \left|\Fourier f(\text{-}u)  \Fourier f(v)  \right|}{|\phi_{\Delta}(u)\phi_{\Delta}(\text{-}v)|}
\left|\kf_h(u) \kf_h(\text{-}v)  \right|
 ( \left| \Psi''(u-v) \right| +|\Psi'(u-v)|^2)  \d u \d v  \\
\notag \leq &\,   \left( \|\Psi''\|_{\infty} +\|\Psi'\|_{\infty}^2\right)\left(\int  \frac{|\Fourier f(\text{-}u)|}
{|\phi_{\Delta}(u)|} |\Fourier \kf_h(u)|\d u  \right)^2
\end{align}
and
\begin{align}
\notag &\phantom{\leq} \
\int \int \frac{|\Fourier f(\text{-}u)\Fourier f(v)|}{|\phi_{\Delta}(u)\phi_{\Delta}(\text{-}v)|} |\Fourier \kf_h(u)\Fourier \kf_h(\text{-}v)|
|\Psi'(u)\Psi'(\text{-}v)|\d u \d v  \\
\notag & = \
\left(\int \frac{\left|\Fourier f(\text{-}u)\right|}{|\phi_{\Delta}(u)|} |\Fourier \kf_h(u)| |\Psi'(u)| \d u \right)^2  \leq 
\left\|\Psi'\right\|_{\infty}^2
 \left( \int \frac{\left|\Fourier f(\text{-}u)\right|}{|\phi_{\Delta}(u)|} |\Fourier \kf_h(u)| \d u \right)^2.
\end{align}
This completes the proof. 
\end{proof}
\subsubsection{{Rates of convergence} }
\begin{lemma} \label{Lemma_Bias_Sobolev} In the situation of Theorem \ref{Hauptsatz_Konvergenzraten_Sobolev}, the approximation error can be  estimated as follows:
\begin{equation}
 \notag 
 \left| \int f(x) \mu(\d x) - \int f(x) (\kf_h\ast \mu)(x) \d x \right|^2 \leq \frac{C_B}{(2\pi)^2} h^{2a+2s} =: b_h
\end{equation}
with constant
\begin{equation*}
 C_B= \left(2 \pi^{-s-a} +\left(   \frac{L_K}{\langle a+ s\rangle !}\right) \right)^2 M_f M_{\mu}.
\end{equation*}
\end{lemma}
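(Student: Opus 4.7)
The plan is to pass to the Fourier domain via Parseval's identity and then split the resulting integral at $|u|=\pi/h$ according to the support of $\Fourier \kf(h\,\cdot)$. Since $\Fourier(\kf_h \ast \mu)(u) = \Fourier \kf(hu) \Fourier \mu(u)$, the bias can be rewritten as
\begin{equation*}
\int f(x)\mu(\d x) - \int f(x)(\kf_h \ast \mu)(x)\d x = \frac{1}{2\pi}\int \Fourier f(-u)\Fourier \mu(u)\bigl(1-\Fourier \kf(hu)\bigr)\d u,
\end{equation*}
and because $\Fourier \kf$ is supported in $[-\pi,\pi]$, the integrand on $\{|u|>\pi/h\}$ is simply $\Fourier f(-u)\Fourier \mu(u)$, while only on $\{|u|\le \pi/h\}$ does the factor $1-\Fourier \kf(hu)$ need to be controlled.

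For the tail piece I would apply Cauchy-Schwarz with the weights $(1+|u|^2)^{s/2}$ and $(1+|u|^2)^{-s/2}$: the first integral is bounded by the Sobolev norm $M_f$ of $f$; for the second, factor out $(1+|u|^2)^{-(s+a)}$ from the Sobolev weight $(1+|u|^2)^a$ associated with $\mu$ and use that $(1+|u|^2)^{-(s+a)} \le (\pi/h)^{-2(s+a)}$ on $\{|u|>\pi/h\}$, which relies on $s+a>0$. This produces a tail contribution of order $\pi^{-(s+a)}h^{s+a}\sqrt{M_f M_\mu}$.

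The core piece is immediate in the sinc case, where $1-\Fourier \kf(hu) \equiv 0$ on $[-\pi/h,\pi/h]$. In the smoothing-kernel case, the key step is a Taylor expansion of $\Fourier \kf$ around zero: because $\kf$ has order $a+s$ one gets $(\Fourier \kf)^{(m)}(0)=0$ for $1\le m \le \langle a+s\rangle$, and combined with the Hölder regularity of $(\Fourier \kf)^{(\langle a+s\rangle)}$ coming from $\Fourier \kf \in \mathcal{H}_\R(a+s,L_\kf,R_\kf)$, Taylor's theorem yields
\begin{equation*}
\bigl|1-\Fourier \kf(hu)\bigr| \le \frac{L_\kf}{\langle a+s\rangle!}\,|hu|^{a+s}.
\end{equation*}
A second Cauchy-Schwarz, together with $|u|^{2(a+s)} \le (1+|u|^2)^{a+s}$ (which turns the weight $|u|^{2(a+s)}(1+|u|^2)^{-s}$ into the Sobolev weight $(1+|u|^2)^a$ for $\mu$), then bounds the core piece by $\frac{L_\kf}{\langle a+s\rangle!}h^{a+s}\sqrt{M_f M_\mu}$.

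The claim now follows by combining the two estimates through the triangle inequality and squaring; the factor $2$ appearing in front of $\pi^{-(s+a)}$ in the stated $C_B$ simply absorbs both the sinc and the smoothing-kernel case into one common constant. The main technical subtlety is the Taylor bound in the core part, where one has to distinguish whether $a+s$ is an integer (in which case the Hölder condition on the top derivative is effectively a Lipschitz-type bound and the integral remainder form of Taylor has to be used) or not; this is a routine case distinction rather than a real obstacle and does not affect the form of the final constant.
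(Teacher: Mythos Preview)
Your proposal is correct and uses the same ingredients as the paper's proof: passage to the Fourier domain, Cauchy--Schwarz with the Sobolev weights $(1+|u|^2)^{\pm s}$, and the Taylor expansion of $\Fourier\kf$ combined with the H\"older regularity of $(\Fourier\kf)^{(\langle a+s\rangle)}$ to control $1-\Fourier\kf(hu)$.

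The only organisational difference is that the paper applies a \emph{single} Cauchy--Schwarz over the whole line, yielding
\[
\frac{M_f M_\mu}{(2\pi)^2}\,\sup_{u\in\R}\,|1-\Fourier\kf(hu)|^2(1+|u|^2)^{-a-s},
\]
and then bounds this supremum by treating $|u|\le \pi/h$ and $|u|>\pi/h$ separately; you instead split the integral first and apply Cauchy--Schwarz to each piece, combining via the triangle inequality. Both routes lead to the same estimate; your version in fact gives the constant $(\pi^{-(s+a)}+L_\kf/\langle a+s\rangle!)^2$, which is even slightly sharper than the stated $C_B$. Your closing remark about an integer/non-integer case distinction for $a+s$ is unnecessary: since $\langle a+s\rangle < a+s$ by definition, $(\Fourier\kf)^{(\langle a+s\rangle)}(0)=0$ always holds, and the Lagrange form of the Taylor remainder works uniformly.
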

\begin{proof}
 By assumption, $f$ is Sobolev-regular with index $s$ and $\mu$ is Sobolev-regular with index $a>-s$. This implies, by duality of Sobolev spaces, that we can pass to 
the Fourier domain and write
\begin{equation}
\label{Darstellung_Bias_Sobolev}
\left| \int f(x) \mu(\d x) - \int f(x) (\kf_h\ast \mu)(x) \d x  \right|^2 = \left|\frac{1}{2\pi} \int \Fourier f(\text{-}u) (1-\Fourier \kf_h(u)) \Fourier \mu(u) \d u  \right|^2.
\end{equation}
Applying  the Cauchy Schwarz inequality and then the regularity assumptions on $f$ and on $\mu$, we find that
\begin{align}
\notag   \phantom{ =} &\, \left|\frac{1}{2\pi} \int \Fourier f(-u) (1-\Fourier \kf_h(u)) \Fourier \mu(u) \d u  \right|^2 \\
\notag  \leq &\, \frac{1}{(2\pi)^2} \int  |\Fourier f(-u)|^2 (1+|u|^2)^{s} \d u \int |1-\Fourier \kf_h(u)|^2 (1+|u|^2)^{-a-s} |\Fourier \mu(u)|^2 (1+|u|^2)^{a}\d u \\  
\label{Darstellung_Bias_Sobolev_2.5.}\leq &\,  \frac{M_f M_{\mu}}{(2\pi)^2} \,  \sup_{u\in \R}  |1-\Fourier \kf_h(u)|^2 (1+|u|^2)^{-a-s} .\end{align}
If $\kf$ is the sinc kernel, we can immediately  estimate
\begin{eqnarray}
\label{Bias_Sob-Sinc}
 \hspace*{-0,1cm} \sup_{u\in \R} |1- \Fourier \kf(hu)|^2 (1+|u|^2)^{-a-s}
 =   \sup_{|u|\geq \frac{\pi}{h} } (1+|u|^2)^{-a-s}
 \leq  \pi^{-2a -2s} h^{2a+2s},
\end{eqnarray}
which gives the desired result.

If $\Fourier \kf$ is $\langle a+s \rangle$-times continuously differentiable and the derivatives up to order
$\langle a+s \rangle  $ at zero vanish, a Taylor series expansion  gives for some \mbox{$\tau\in [-hu,hu]$}:
\begin{equation}
\label{Abschaetzung_Hoelderkern_1}
  1- \Fourier \kf(hu)  
 = \frac{\Fourier \kf^{\left(\langle a+s \rangle\right) }(\tau)}{\langle a+s \rangle !}(hu)^{\langle a+s \rangle }.
\end{equation}
By H\"older continuity of $\Fourier \kf^{\langle a+s\rangle}$, 
\begin{eqnarray}
 \label{Abschaetzung_Hoelderkern_2}
 \left| \Fourier \kf^{\left( \langle a+s \rangle \right)}(\tau) \right|
  =  \left| \Fourier \kf^{\left( \langle a+s \rangle \right)}(\tau) - \Fourier \kf^{\left( \langle a+s \rangle \right)}
(0) \right|
 \leq  L_{\kf} |hu|^{a+s - \langle a+s \rangle }.
\end{eqnarray}
From (\ref{Bias_Sob-Sinc}),  (\ref{Abschaetzung_Hoelderkern_1}) and (\ref{Abschaetzung_Hoelderkern_2}) we derive 
\begin{eqnarray}
 \notag &\phantom{\leq} & \sup_{u\in \R} \left|1-\Fourier \kf (hu)\right|^2 (1+|u|^2)^{-a-s} \leq  \left(\frac{L_K}{\langle a+s \rangle !}  +2/\pi^{a+s} \right)^2 h^{2(a+s)  }.
\end{eqnarray}
 This gives the desired result thanks to (\ref{Darstellung_Bias_Sobolev}) and (\ref{Darstellung_Bias_Sobolev_2.5.}).
\end{proof}

\begin{lemma}
 \label{Lemma_Abschaetzung_Bias_allgemeiner_Kernschaetzer}
In the situation of Theorem \ref{Hauptsatz_Raten_Hoelder}, 
\begin{equation*}
 \label{Formel_Biasabschaetzung}
\left| \int f(x)  \mu(\d x) - \int f(x) (\kf_h \ast \mu)(x) \d x \right|^2  \leq C_{B} h^{2a+2s}
\end{equation*}
for some positive constant $C_B$ depending on $C_K, a-d_1\vee d_2 -b$, $L$ and $R$.
\end{lemma}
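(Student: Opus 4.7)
The plan is to reduce the estimation of the bias to a spatial integral on a neighborhood of $\supp(f)\subset D$, where $\mu$ coincides with the H\"older density $g_D$, and then to combine the regularity of $g_D$, the moment vanishing and spatial decay of $\kf$, and the Fourier-side decay of $f$ in direct analogy with the proof of Lemma \ref{Lemma_Bias_Sobolev}.

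\textbf{Step 1 (localization).} Choose $\chi\in C_c^\infty(D)$ with $\chi\equiv 1$ on a neighborhood of $[a,b]$ and write $\mu = \chi g_D \d x + (1-\chi)\mu$. Set $\tilde g := \chi g_D$; this is a compactly supported function lying in $\mathcal{H}_\R(a,L',R')$ for constants depending on $\chi,L,R,D$ and $[a,b]$. Since $f$ is supported where $\chi\equiv 1$, one has $\int f\d\mu = \int f\tilde g\d x$ and $\int f\d((1-\chi)\mu)=0$, hence
$$
\int f(x)\mu(\d x) - \int f(x)(\kf_h\ast\mu)(x)\d x \;=\; B_1 - B_2
$$
with $B_1 := \int f(x)[\tilde g(x) - (\kf_h\ast\tilde g)(x)]\d x$ and $B_2 := \int f(x)(\kf_h\ast((1-\chi)\mu))(x)\d x$.

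\textbf{Step 2 (remote term).} The supports of $f$ and of $(1-\chi)\mu$ are separated by some fixed $\delta>0$. The decay assumption on $\kf^{(m)}$ yields, uniformly in $x\in\supp(f)$, $y\in\supp((1-\chi)\mu)$ and $0\le m\le (-s)\vee 0$,
$$
\left|\kf_h^{(m)}(x-y)\right| \;\le\; C_\kf\, h^{-1-m}\bigl(1+\delta/h\bigr)^{-(a+s+m+1)} \;\lesssim\; h^{a+s}.
$$
Pairing $f$, which has order at most $(-s)\vee 0$, with the smooth function $x\mapsto\int\kf_h(x-y)(1-\chi(y))\mu(\d y)$ gives $|B_2|\lesssim h^{a+s}$.

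\textbf{Step 3 (main term).} By Parseval,
$$
B_1 \;=\; \frac{1}{2\pi}\int \Fourier f(-u)\,(1-\Fourier\kf(hu))\,\Fourier\tilde g(u)\,\d u.
$$
Three pointwise ingredients feed in: (i) $|\Fourier f(-u)|\le C_f(1+u^2)^{-s}$ by hypothesis; (ii) $|\Fourier\tilde g(u)|\le C_{\tilde g}(1+|u|)^{-a}$, obtained from the compact support and $a$-H\"older regularity of $\tilde g$ by $\langle a\rangle$-fold integration by parts followed by the standard half-period shift argument; (iii) the vanishing moments of $\kf$ up to order $\langle a+s\rangle$, combined with the H\"older regularity of $\Fourier\kf$ (itself a consequence of the spatial decay of $\kf$), yield via a fractional Taylor expansion $|1-\Fourier\kf(hu)|\le C\min(|hu|^{a+s},1)$. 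Splitting the integral at $|u|=1/h$ and using (i)--(iii) in the spirit of Lemma \ref{Lemma_Bias_Sobolev} produces $|B_1|\lesssim h^{a+s}$. Combining with Step 2 gives $|B|^2\le 2|B_1|^2+2|B_2|^2\lesssim h^{2(a+s)}$, which is the claimed bound.

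\textbf{Main obstacle.} The delicate point is Step 3: the naive product bound $|\Fourier f(-u)\Fourier\tilde g(u)|\lesssim(1+|u|)^{-(a+2s)}$ combined with $|1-\Fourier\kf(hu)|\lesssim\min(|hu|^{a+s},1)$ only recovers the rate $h^{a+s}$ up to polynomial or logarithmic losses when $s\le 1/2$. The resolution is to mimic the Sobolev argument of Lemma \ref{Lemma_Bias_Sobolev} more carefully, pulling out the uniform estimate $|1-\Fourier\kf(hu)|(1+u^2)^{-(a+s)/2}\lesssim h^{a+s}$ and controlling the remaining integral by a Cauchy--Schwarz in an $L^2$-weighted pairing that balances the Sobolev-type weight $(1+u^2)^s$ on the $\Fourier f$ side against the weight $(1+u^2)^a$ on the $\Fourier\tilde g$ side, the latter being finite thanks to the compact support and H\"older regularity of $\tilde g$.
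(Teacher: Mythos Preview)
Your Steps 1 and 2 (localization and control of the remote contribution) are correct and essentially coincide with the paper's argument. The genuine problem is Step 3.

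The Fourier-domain approach you propose for the main term $B_1$ cannot be closed with the tools you invoke. You correctly identify that the naive product bound loses a full power of $h$. Your proposed fix is a Cauchy--Schwarz pairing that requires
\[
\int |\Fourier \tilde g(u)|^2 (1+u^2)^{a}\,\d u < \infty,
\]
i.e.\ $\tilde g\in H^a$. But compact support together with $a$-H\"older regularity does \emph{not} imply membership in $H^a$; the inclusion $C^a_c\subset H^a$ fails at the critical index (a lacunary Weierstrass-type series $\sum_k 2^{-ka}\cos(2^k x)$, smoothly cut off, is $a$-H\"older but has infinite $H^a$ norm). The same obstruction hits the $\Fourier f$ side: from the pointwise hypothesis $|\Fourier f(-u)|\le C_f(1+|u|)^{-s}$ one cannot conclude $\int|\Fourier f(-u)|^2(1+u^2)^s\,\d u<\infty$, which is what the Sobolev argument of Lemma~\ref{Lemma_Bias_Sobolev} actually uses. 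So neither factor in the Cauchy--Schwarz bound is finite in general, and the rate $h^{a+s}$ is not recovered this way.

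The paper avoids the Fourier domain entirely for $B_1$. It rewrites
\[
B_1=\int \kf_h(y)\bigl[(f\ast \tilde g)(0)-(f\ast\tilde g)(y)\bigr]\d y,
\]
and shows that the convolution $f\ast\tilde g$ is $(\langle a\rangle+s)$-times continuously differentiable with $(a-\langle a\rangle)$-H\"older top derivative. The key observation is that $(f\ast\tilde g)^{(\langle a\rangle+s)}=f^{(s)}\ast\tilde g^{(\langle a\rangle)}$, where $f^{(s)}$ is a compactly supported distribution of order~$0$ (this is where the hypothesis $|\Fourier f(-u)|\lesssim(1+|u|)^{-s}$ is used), so pairing it with the H\"older function $\tilde g^{(\langle a\rangle)}(\cdot-x)-\tilde g^{(\langle a\rangle)}(\cdot-y)$ gives $|x-y|^{a-\langle a\rangle}$. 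A Taylor expansion of $f\ast\tilde g$ to order $\langle a\rangle+s$, combined with the moment vanishing of $\kf$ and the moment bound $\int|\kf(z)||z|^{a+s}\d z<\infty$, then yields $|B_1|\lesssim h^{a+s}$ directly.
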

\begin{proof}
We use the trivial observation that the local density $g_D$ can be extended to a compactly supported function  $g_1\in \mathcal{H}_{\R}(a,L',R')$
with constants $L'>L$ and $R'>R$.  

Let $\mu_1$ be the signed measure with density $g_1$ and let $\mu_2:= \mu- \mu_1$. Then the approximation error can be decomposed as follows: Since
$\supp(f)=[a,b]\subseteq D$, we have
\begin{align}
\notag \phantom{=} & \,   \left| \int f(x) \mu(\d x) - \int f(x) (\kf_h\ast \mu)(x) \d x \right| \\
\notag  = & \left|\int f(x) (g_1(x) - \kf_h\ast g_1(x) ) \d x-\int f(x) (\kf_h\ast \mu_2)(x)\d x  \right|\\
\label{letzte_Zeile}\leq &\,   \left|\int f(x) (g_1- \kf_h\ast g_1)(x) \d x  \right|
+ \left| \int f(x) (\kf_h\ast \mu_2)(x) \d x \right| .
\end{align}
Recall that $f$ is compactly supported and, by assumption, \mbox{$|\Fourier f(u)|\leq C_f (1+|u|)^{-s}$}.  Consequently,  $f$ is a distribution of order $k:=-s$, see  Theorem 7.23 in \citet{Rudin}.   Thanks to Theorem 6.34 ibidem, we can estimate 
\begin{eqnarray*}
 \label{Bias_allgKS_Rest}
 \left| \int f(x) \left( \kf_h\ast \mu_2\right)(x) \d x \right| 
  \leq  \|f\|\sup_{m\leq k} \sup_{x\in [a,b]}\limits \left|(\kf_h\ast \mu_2)^{(m)}(x)  \right|
\end{eqnarray*}
for some constant $\|f\|$ depending only on  $f$. 
By assumption,  $\kf_h$ is $k$-times continuously differentiable, with bounded derivatives. Since, moreover, $\mu_2$ is finite, we can derivate under the integral sign and write
\begin{eqnarray}
\label{Bias_allgKS_Rest_2.5.}
\notag &\phantom{=}& \|f\|\sup_{m\leq k} \sup_{x\in [a,b]}\limits \left|(\kf_h\ast \mu_2)^{(m)}(x)  \right|= \|f\| \sup_{m\leq k}\sup_{x\in [a,b]}\limits \left| \left( \kf_h^{(m)}\ast \mu_2 \right)(x) \right| \\
 & = & \|f\| \sup_{m\leq k} \sup_{x\in [a,b]}\limits \left| \int  h^{-m-1} \kf^{(m)}\left(\frac{x-y}{h} \right)\mu_2(\d y)
 \right|.
\end{eqnarray}
Using the fact that $\supp(f)= [a,b]  \subseteq D= (d_1, d_2)$ and that $\mu_2\big|_D\equiv 0$ by construction, we continue from
(\ref{Bias_allgKS_Rest_2.5.})  by estimating
\begin{eqnarray*}
\notag    \|f\| \sup_{m\leq k} \sup_{x\in [a,b]}\limits \left| \int  h^{-m-1} \kf^{(m)}\left(\frac{x-y}{h} \right)\mu_2(\d y)
\right| \leq  \|f\| \sup_{m\leq k} \sup_{z\geq \delta }\limits
h^{-m-1} \left| \kf^{(m)}\left(\frac{z}{h}\right)\right| |\mu_2|(\R) 
\end{eqnarray*}
with $\delta:= (a-d_1) \wedge (d_2-b)$.
Finally, the assumptions on the decay of $\kf$ and its derivatives up to order $k$ give
\begin{equation*}
 \|f\| \sup_{m\leq k} \sup_{z\geq \delta }\limits
h^{-m-1} \left| \kf^{(m)}\left(\frac{z}{h}\right)\right| |\mu_2|(\R)
 \leq  \|f\| C_K |\mu_2|(\R)  \delta^{-a-(s\vee 0)-1} h^{a+s}.
\end{equation*}
It remains to consider the first expression in the last line of formula (\ref{letzte_Zeile}). We observe that 
\begin{eqnarray}
\label{Distribution_aso}
\int f(x) \left(\kf_h\ast g_1\right)(x) \d x = \int \kf_h(y) \int f(x) g_1(x-y) \d x \d y.
\end{eqnarray}
If $f$ is regular, this is simply a consequence of Fubini\rq{}s theorem. For non-regular $f$, a straightforward  generalization of Theorem 39.3  and Theorem 39.10 in \citet{Jantscher} gives formula (\ref{Distribution_aso}). 

Let  $\tilde{g}_1(x):= g_1(-x)$. Then  
\begin{eqnarray}
\notag \int f(x) (g_1(x) - \kf_h\ast g_1(x) ) \d x &{=}& \int \kf_h(y) \left( \int g_1(x) f(x) \d x - \int g_1(x-y) f(x) \d x \right) \d y  \\
\notag & =& \int \kf_h(y) \left( f\ast \tilde{g}_1 (0) - f\ast \tilde{g}_1(y) \right) \d y.
\end{eqnarray}
We observe  that $f\ast \tilde{g}_1$ is $\langle a \rangle +s$-times continuously differentiable and that the derivative
of order $\langle a \rangle +s$ is $a-\langle a\rangle$-H\"older continuous. To see this, we use the fact that $\left(f\ast \tilde{g}_1\right)^{\left(\langle a\rangle +s \right)} = f^{(s)} \ast \tilde{g}_1^{\langle a \rangle }$, where 
$f^{(s)}$ is understood to be  a distributional derivative.  (See  Theorem  41.3 in \cite{Jantscher} for explanation.) Now, since
\mbox{$|\Fourier f^{(s)}(-u)| = |u|^s |\Fourier f(-u)| \leq C_f$}, we can use  Theorem 7.23 in \cite{Rudin} to conclude that $f^{(s)}$ 
is a compactly supported distribution of order $0$.  From this, and Theorem 6.34 in \cite{Jantscher},  we derive that 
\begin{eqnarray*}
\notag  \left| f^{(s)}\ast \tilde{g}_1^{\langle a \rangle } (x) - f^{(s)} \ast \tilde{g}_1^{\langle a \rangle}(y) \right|   & \leq &  \|f^{(s)}\|  \sup_{\tau \in [a,b]} \left| \tilde{g}_1^{\langle a \rangle } (x-\tau) - \tilde{g}_1^{\langle a \rangle } (y-\tau) \right|\\
&\leq & \|f^{(s)} \| L' \left| x- y\right|^{a- \langle a\rangle }
\end{eqnarray*}
for a constant $\|f^{(s)}\|$ depending on $f$.

Now, a  Taylor series expansion  of $f\ast \tilde{g}_1$ and an application of the order of $\kf$ yields for some
$\tau_y \in [0,y]$:
\begin{eqnarray*}
\notag &\phantom{\leq} & \left|  \int \kf_h(y)  \left( (f\ast \tilde{g}_1)(0) - (f\ast \tilde{g}_1)(y)     \right)  \d y  \right|  \\
\notag & = & \frac{1}{\left(\langle a\rangle +s\right)!}\hspace*{-0,1cm} \left|\int \kf_h(y)\hspace*{-0,1cm} \left((f\ast \tilde{g}_1 )^{(\langle a\rangle +s)}(\tau_y)
-(f\ast \tilde{g}_1 )^{(\langle a\rangle +s)}(0) \right) y^{\langle a \rangle +s} \d y  \right|\\
\notag  &\leq &   \frac{1}{\left(\langle a\rangle +s\right)!} \|f^{(s)}\| L'\int |\kf_h(y)| |y|^{a-\langle a \rangle} |y|^{\langle a \rangle +s} \d y\\
&= & \frac{1}{\left(\langle a\rangle +s\right)!} \|f^{(s)}\| L'  \int |\kf(z)| |z|^{a+s} \d z\, h^{a+s}. 
\end{eqnarray*}
This completes the proof.
\end{proof}
Theorem \ref{Hauptsatz_Konvergenzraten_Sobolev} and Theorem \ref{Hauptsatz_Raten_Hoelder} are immediate consequences of Lemma \ref{Lemma_Abschaetzung_Bias_allgemeiner_Kernschaetzer} and Lemma \ref{Lemma_Abschaetzung_Bias_allgemeiner_Kernschaetzer} combined with the assumptions on $\kf$ and $f$ and on the decay of $\phi$. 

\subsection{Adaptive estimation }
\label{Beweisteil_adaptiv}
\subsubsection{{Preliminaries}}  Recall the following well known result: 
\begin{lemma}[Talagrand's inequality]
\label{Talagrand-Ungleichung}
Let $I$ be some countable index set. For each $i\in I$, let $X_1^{(i)}, \cdots, X_n^{(i)}$ be centred i.i.d.
complex valued random variables, defined on the same probability space,  with
$\|X_1^{(i)}\|_{\infty} \leq B$ for some $B<\infty$. Let $v^2:= \sup_{i\in I}\limits \Var X_1^{(i)}$. Then for arbitrary $\eps>0$,
 there are positive  constants $c_1$ and  $c_2=c_2(\eps)$ depending only on $\eps$ such that for any $\kappa>0$:
\begin{equation*}
 \PP \left(\left\{ \sup_{i\in I} |S_n^{(i)}| \geq (1+\eps) \E \left[\sup_{i\in I} |S_n^{(i)}| \right] +\kappa \right\} \right)
\leq 2  \exp\left(-n \left(  \frac{\kappa^2}{c_1 v^2} \wedge \frac{\kappa}{ c_2  B} \right) \right).
\end{equation*}
\end{lemma}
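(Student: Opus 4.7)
The plan is to treat this as a citation of a classical result rather than prove it from scratch, since the stated inequality is the standard Talagrand concentration inequality for suprema of empirical processes, in the refined Bernstein-type form due to Ledoux, Massart, Rio and Bousquet. My write-up would first reduce to a countable-sup measurability setting (already granted by the hypothesis that $I$ is countable), normalize $S_n^{(i)}/n$ to an empirical process indexed by a class of functions $\mathcal{F}$ with $\|f\|_\infty \le B/n$ and $\sup_f \Var f \le v^2/n$, and then invoke the standard statement to obtain the prescribed tail.

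In more detail, the key steps I would take are: (i) represent $Z := \sup_{i\in I} |S_n^{(i)}|$ as a function of the i.i.d. sample and verify the bounded-differences / self-bounded structure needed for the entropy method; (ii) apply a modified log-Sobolev inequality to bound the log-Laplace transform of $Z - \E Z$, producing a Bernstein-type bound of the form $\PP(Z \ge \E Z + \kappa) \le \exp(-\kappa^2/(2(nv^2 + 2B\E Z) + 2 B\kappa/3))$ (Bousquet's version), and (iii) split this tail according to whether the Gaussian or the sub-exponential regime dominates, yielding the $\kappa^2/(c_1 v^2) \wedge \kappa/(c_2 B)$ form of the exponent. The role of the $(1+\eps)$ factor is to absorb the $2B\E Z$ term inside the denominator: bounding $2B \E Z \cdot \kappa \le \eps (\E Z)^2 + \eps^{-1} B^2 \kappa^2$ via Young's inequality and reorganizing shows that the cross-term can be moved into the deviation $\kappa$ and into a redefinition of $c_2$, which is exactly why $c_2$ depends on $\eps$ while $c_1$ can be chosen universal.

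The main obstacle — insofar as there is one in a purely recall-style lemma — is matching the exact normalisation and constants to the form stated here: verifying that the factor $n$ appears inside the exponent rather than outside, and that the centring condition $\E X_1^{(i)} = 0$ together with the complex-valuedness (handled by treating real and imaginary parts separately and adjusting $B$ and $v^2$ by a constant) is compatible with the standard real-valued formulations. Since the present excerpt only \emph{uses} the inequality as a black box, my proof proposal is simply to cite the reference (e.g.\ Bousquet or Massart) and, if desired, reproduce the two or three lines of Young-inequality manipulation that convert the textbook version into the $(1+\eps)$-form stated in Lemma \ref{Talagrand-Ungleichung}.
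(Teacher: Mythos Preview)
Your proposal is correct and matches the paper's treatment exactly: the paper does not prove this lemma but simply states that ``A proof can be found for example in \citet{Massart}.'' Your additional sketch of how to pass from Bousquet's form to the $(1+\eps)$-form via a Young-inequality absorption of the $2B\E Z$ term is more detail than the paper gives, but it is accurate and consistent with the cited reference.
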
 
A proof can be found for example in \citet{Massart}.

The main objective of the present subsection is to prove  Lemma \ref{Neumann_Lemma_uniform}.  We start by providing a series of technical results.   The arguments presented here are fairly general and not particular to the \Levy\ model. 
\begin{lemma}
\label{Lemma_Abweichung_emp_cf_uniform} Let $\tau>0$ be given. Let $\delta$ be the constant appearing in the definition of the
weight function $w$  and let $c_1$ be  the constant in Talagrand's inequality. Then, for arbitrary $\gamma>0$, there is a positive constant
\mbox{$C\kap=C\kap^{\tau,\gamma, \delta}$} depending only on the choice of $\tau, \gamma$ and $\delta$  such that  we have for $n\geq 1$:
\begin{equation*}
\PP \left(\left\{ \exists u\in \R: |\hat{\phi}_n(u) - \phi(u)|\geq  \tau (\log n)^{1/2} w(u)^{-1} n^{-1/2}  \right\} \right)
\leq C\kap  n^{-\frac{(\tau - \gamma)^2}{c_1} }.
\end{equation*}
\end{lemma}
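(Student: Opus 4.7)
Writing $g_n := \hat{\phi}_n - \phi$, the event in question is $\{\sup_{u\in\R} w(u)|g_n(u)| \geq \tau (\log n)^{1/2} n^{-1/2}\}$. Since $g_n$ is almost surely continuous and $w$ is continuous, the supremum over $\R$ equals the supremum over a countable dense set, so Talagrand's inequality (Lemma~\ref{Talagrand-Ungleichung}) applies directly. The natural summands are the centered, bounded random variables $X_k^{(u)} := w(u)\left(e^{iuZ_k} - \phi(u)\right)$, which satisfy $\|X_k^{(u)}\|_\infty \leq 2w(u) \leq 2$ and $\Var(X_k^{(u)}) \leq w(u)^2 \leq 1$.

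The central technical input is a bound of the form
\[
\E\bigl[\sup_{u\in\R} w(u)|g_n(u)|\bigr] \leq C_\delta\, n^{-1/2},
\]
which is precisely what motivates the particular choice of weight $w$ in \citet{Neumann_Reiss}. I would obtain this by a dyadic decomposition $\R = \bigcup_{j\geq 0} B_j$ with $B_j = \{u : e^{j-1} \leq e+|u| \leq e^j\}$, on which $w(u) \asymp j^{-1/2-\delta}$. For each band, a Neumann-type bound for the expected supremum of the empirical characteristic function on an interval of length $e^j$ gives $\E[\sup_{u\in B_j}|g_n(u)|] \lesssim \sqrt{j}/\sqrt{n}$. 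Summing,
\[
\E[\sup_{u\in\R} w(u)|g_n(u)|] \,\lesssim\, n^{-1/2} \sum_{j\geq 1} j^{-1/2-\delta}\sqrt{j} \,=\, n^{-1/2} \sum_{j\geq 1} j^{-\delta},
\]
and the chaining/entropy argument can be refined (again following \citet{Neumann_Reiss}) so that the series converges for every $\delta>0$. This is the main obstacle: the delicate balance between the band-wise supremum and the weight $w$ is exactly what the choice of $w$ is engineered to achieve.

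Once the expected supremum bound is in hand, the conclusion follows by a single application of Talagrand's inequality. Fix a small $\varepsilon>0$ and set $\kappa := (\tau-\gamma)(\log n)^{1/2} n^{-1/2}$. For $n$ sufficiently large (depending on $\gamma,\delta,C_\delta,\varepsilon$), we have $(1+\varepsilon) C_\delta n^{-1/2} + \kappa \leq \tau(\log n)^{1/2} n^{-1/2}$, so Talagrand yields
\[
\PP\bigl(\sup_u w(u)|g_n(u)| \geq \tau(\log n)^{1/2} n^{-1/2}\bigr) \,\leq\, 2 \exp\!\left(-n\bigl(\tfrac{\kappa^2}{c_1}\wedge \tfrac{\kappa}{2c_2}\bigr)\right) \,=\, 2\, n^{-(\tau-\gamma)^2/c_1},
\]
since for this scaling of $\kappa$ the quadratic term $\kappa^2/c_1$ is the active one in the minimum. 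Finitely many small values of $n$ for which the Talagrand estimate fails to give something useful are absorbed into the multiplicative constant $C^{\kappa}$, yielding the stated bound for all $n \geq 1$.
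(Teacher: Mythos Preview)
Your argument is correct and follows essentially the same route as the paper: reduce to a countable index set by continuity, invoke the Neumann--Rei\ss\ bound $\E[\sup_u w(u)|\hat{\phi}_n(u)-\phi(u)|]\leq C n^{-1/2}$, and apply Talagrand's inequality with threshold of order $(\log n)^{1/2}n^{-1/2}$ so that the sub-Gaussian term governs. The only cosmetic difference is that the paper simply cites Theorem~4.1 in \citet{Neumann_Reiss} for the expected-supremum bound rather than sketching a dyadic argument, and sets $\kappa_n=\tau(\log n)^{1/2}n^{-1/2}-(1+\eps)C\neurei n^{-1/2}$ directly instead of your $(\tau-\gamma)(\log n)^{1/2}n^{-1/2}$.
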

\begin{proof}
 We prove the claim for the countable set of rational numbers. By continuity of the characteristic function and of
$w$, it carries over to the whole range of real numbers.

By Theorem 4.1 in \citet*{Neumann_Reiss}, we have for some positive  constant $C\neurei$:
\begin{equation*}
 \E \left[\sup_{u\in \R} |\hat{\phi}_n(u) - \phi(u)|w(u) \right]  \leq C\neurei n^{-1/2}.
\end{equation*}
Since moreover, we trivially have $
\sup_{u\in \R}  \Var[\hat{\phi}_1(u)] \leq 1$ and 
$ \sup_{u\in \R} \|\hat{\phi}_1(u)w(u)\|_{\infty}\leq 1,$
we can apply Talagrand's inequality. Setting
\begin{equation*}
 \kappa_n:=  \tau (\log n)^{1/2}  n^{-1/2} - (1+\eps) C\neurei  n^{-1/2},
\end{equation*}
for some $\eps>0$,   we    can   estimate
\begin{eqnarray*}
\notag &\phantom{=}& \PP \left( \left\{\exists q \in \Q: |\hat{\phi}_n(q) - \phi(q)|\geq  \tau (\log n)^{1/2} w(q)^{-1} n^{-1/2}
 \right\}\right)   \\
\notag &=& \PP \left(\left\{ \sup_{q\in \Q} |\hat{\phi}_n(q) - \phi(q)|w(q)\geq  \tau (\log n)^{1/2}
n^{-1/2}   \right\}
\right)  \\
\notag &\leq & \PP \left( \left\{ \sup_{q\in \Q} |\hat{\phi}_n(q) - \phi(q)|w(q)\geq  (1+\eps)
\E \left[ \sup_{q\in \Q} |\hat{\phi}_n(q) - \phi(q)|w(q)  \right]    +  \kappa_n \right\}  \right)\\
&\leq & 2 \exp \left(-n \left(  \frac{\kappa_n^2}{c_1} \wedge  \frac{\kappa_n}{c_2}\right) \right).
\end{eqnarray*}
By definition of $\kappa_n$,  we have for $C\kap$ large enough and  arbitrary $n\geq 1$:
\begin{eqnarray*}
\notag &&\hspace*{-0,6cm}\phantom{\leq}  2\exp \left(-n \left( \frac{\kappa_n^2}{c_1} \wedge \frac{\kappa_n}{c_2} \right) \right) \\
\notag  &&\hspace*{-0,6cm}\leq    2 \exp \left(  -  \frac{\left( \tau (\log n)^{1/2} - (1+\eps ) C\neurei\right)^2 }
{c_1} \right) \vee 2 \exp \left( -\frac{n^{1/2} \left(\tau  (\log n)^{1/2} - (1+\eps) C\neurei    \right)}{c_2  }  \right) \\
&&\hspace*{-0,6cm} \leq  C\kap  \exp \left( - \frac{(\tau -\gamma)^2}{c_1} (\log n)  \right) = C\kap n^{-\frac{(\tau - \gamma)^2}{c_1}   }.
\end{eqnarray*}
This is the desired result for the rational numbers  and hence, by continuity, for
the real line.
\end{proof}
We can now use Lemma \ref{Lemma_Abweichung_emp_cf_uniform} to analyse the deviation of
$1/\check{\phi}_n$ from  $1/\phi$.
\begin{lemma}
\label{Abweichung_Bruch_emp_cf_uniform}
Assume that for some $\gamma>0$ and some $p>0$, we have $\kappa\geq 2(\sqrt{p c_1} + \gamma)$,  where $c_1$
 denotes the constant in Talagrand's inequality. Then 
\begin{equation*}
\PP \left(\left\{\exists u \in \R:   \left|\phinneuu  - \phinu  \right|^2>\left(\frac{(4\kappa)^2(\log n) w(u)^{-1} n^{-1}}{|\phi(u)|^4} \wedge 
   \frac{(5/2)^2}{|\phi(u)|^2} \right)\right\}  \right)\leq  C\kap n^{-p} .
\end{equation*}
\end{lemma}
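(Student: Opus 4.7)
The key is to reduce the claim to a deterministic statement on a high-probability event. Set
\[
\Omega_n := \Bigl\{\sup_{u\in\R} |\hat{\phi}_n(u) - \phi(u)|\,w(u) \leq (\kappa/2)(\log n)^{1/2} n^{-1/2}\Bigr\}.
\]
Apply Lemma \ref{Lemma_Abweichung_emp_cf_uniform} with $\tau = \kappa/2$: this yields $\PP(\Omega_n^c) \leq C\kap n^{-(\tau-\gamma)^2/c_1}$, and the hypothesis $\kappa \geq 2(\sqrt{pc_1}+\gamma)$ is precisely the statement that $(\tau-\gamma)^2/c_1 \geq p$. It therefore suffices to show that on $\Omega_n$ the claimed pointwise deviation bound holds for every $u\in\R$.

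On $\Omega_n$ I would split into two regimes according to whether $|\phi(u)|$ is above or below the threshold $(8\kappa/5)(\log n)^{1/2} w(u)^{-1} n^{-1/2}$. In the first (large $|\phi(u)|$) regime the bound on $\Omega_n$ gives $|\hat{\phi}_n(u)| \geq |\phi(u)| - (\kappa/2)(\log n)^{1/2} w(u)^{-1} n^{-1/2} \geq (11/16)|\phi(u)| \geq \kappa (\log n)^{1/2} w(u)^{-1} n^{-1/2}$, so no truncation is triggered and $\check{\phi}_n(u) = \hat{\phi}_n(u)$. The identity
\[
\left|\frac{1}{\check{\phi}_n(u)} - \frac{1}{\phi(u)}\right| = \frac{|\phi(u) - \hat{\phi}_n(u)|}{|\hat{\phi}_n(u)|\,|\phi(u)|} \leq \frac{(8\kappa/11)\,(\log n)^{1/2} w(u)^{-1} n^{-1/2}}{|\phi(u)|^2}
\]
then gives the $(4\kappa)^2$ bound after squaring (since $(8/11)^2 < 16$), while the $(5/2)^2/|\phi(u)|^2$ bound follows by substituting $(\log n)^{1/2} w(u)^{-1} n^{-1/2} \leq 5|\phi(u)|/(8\kappa)$ from the regime hypothesis.

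In the second (small $|\phi(u)|$) regime truncation may be active, but by construction one always has $|\check{\phi}_n(u)| \geq \kappa(\log n)^{1/2} w(u)^{-1} n^{-1/2}$, and a crude estimate yields $|\hat{\phi}_n(u) - \check{\phi}_n(u)| \leq 2\kappa(\log n)^{1/2} w(u)^{-1} n^{-1/2}$ (trivially zero when truncation is not active, and bounded by $|\hat{\phi}_n(u)|+|\check{\phi}_n(u)|$ otherwise). Combined with the deviation bound on $\Omega_n$, the triangle inequality gives $|\phi(u)-\check{\phi}_n(u)| \leq (5\kappa/2)(\log n)^{1/2} w(u)^{-1} n^{-1/2}$. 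Rewriting $|1/\check{\phi}_n(u)-1/\phi(u)| = |\phi(u)-\check{\phi}_n(u)|/(|\check{\phi}_n(u)|\,|\phi(u)|)$ and substituting the lower bound on $|\check{\phi}_n(u)|$ yields the $(5/2)/|\phi(u)|$ bound directly, and the $(4\kappa)^2$ bound is then an algebraic rearrangement of the squared regime hypothesis $|\phi(u)|^2 \leq (8\kappa/5)^2(\log n) w(u)^{-2} n^{-1}$.

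The main technical obstacle is the coordination of numerical constants so that the two claimed bounds hold simultaneously on the single event $\Omega_n$: the threshold separating the regimes must be large enough (at least $3\kappa/2$) to exclude truncation in the first case, yet small enough (at most $8\kappa/5$) to let the $(5/2)$-type bound of the second case imply the $(4\kappa)^2$-type bound. The value $8\kappa/5$ is essentially the only choice reconciling these two constraints, and it is this balance that pins down the constant $4$ appearing in $(4\kappa)^2$.
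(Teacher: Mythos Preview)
Your proof is correct and follows essentially the same route as the paper: define the favourable set $\Omega_n$ (the paper calls it $C$) via Lemma~\ref{Lemma_Abweichung_emp_cf_uniform} with $\tau=\kappa/2$, then split according to the size of $|\phi(u)|$ relative to a multiple of $a_n(u):=\kappa(\log n)^{1/2}w(u)^{-1}n^{-1/2}$. The paper chooses the threshold $3a_n(u)/2$ rather than your $(8/5)a_n(u)$; both lie in the admissible window $[3/2,\,8/5]\cdot a_n(u)$ that you identify in your closing paragraph, so your remark that $8\kappa/5$ is ``essentially the only choice'' is a slight overstatement, but the argument itself is sound and matches the paper's.
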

\begin{proof}
We use the notation $a_n(u): =\kappa  (\log n)^{1/2}  w(u)^{-1} n^{-1/2 } $.  Let us introduce the favourable set 
\[
C:= \Big\{ \forall u\in \R:  |\hat{\phi}_n(u) - \phi(u)|  \leq a_n(u)/2 \Big\}.
\]
Thanks to the preceding Lemma and the choice of $\kappa$, 
\[
\PP(C^c) \leq C\kap n^{-p},
\]
so it is enough to consider $C$.  

We have 
\[
\left| \frac{1}{\check{\phi}_n(u) }- \frac{1}{\phi(u)}         \right|^2= \frac{ |\check{\phi}_n(u)  - \phi(u) |^2    }{ |\check{\phi}_n(u)|^2| \phi(u)|^2  }.
\]
Consider first the case where $|\phi(u)| > 3 a_n(u)/2$.  Then, by definition of $C$, 
\[
|\hat{\phi}_n(u) | \geq |\phi(u)|- |\phi(u)- \hat{\phi}_n(u)| > \frac{2}{3} |\phi(u)|\geq a_n(u). 
\]
Consequently, 
\begin{align*} \frac{ |\check{\phi}_n(u)  - \phi(u) |^2    }{ |\check{\phi}_n(u)|^2| \phi(u)|^2  }  = \frac{ |\hat{\phi}_n(u)  - \phi(u) |^2    }{ |\hat{\phi}_n(u)|^2| \phi(u)|^2  }  \leq \frac{(3a_n(u)/4)^2    }{|\phi(u)|^4   }  \leq \frac{1}{4|\phi(u)|^2}. 
\end{align*}
Now, consider $|\phi(u)| \leq 3 a_n(u)/2$.  By definition of $C$, 
\begin{align*}
|\check{\phi}_n(u) - \phi(u)| \leq \max \{ |\hat{\phi}_n(u) - \phi(u)|, |\phi(u)| +a_n(u) \} \leq 5 a_n(u) /2. 
\end{align*}
Since, moreover,  $ \left|\check{\phi}_n(u) \right|\geq a_n(u)$ holds by definition of $\check{\phi}_n$, 
\begin{align*}
 \frac{ |\check{\phi}_n(u)  - \phi(u) |^2    }{ |\check{\phi}_n(u)|^2| \phi(u)|^2  }
\leq  \frac{  (5 a_n(u)/2)^2 }{|\check{\phi}_n(u)|^2| \phi(u)|^2 }  \leq \frac{ (5/2)^2}{|\phi(u)|^2 }
\leq \frac{ (15 a_n(u) /4)^2 }{|\phi(u)|^4}.
\end{align*}
This is the desired result. 
\end{proof}
The following useful corollary is an immediate consequence of the preceding statement. 
\begin{corollary}
\label{Korollar_Abweichung_Bruch_emp_cf_uniform}  In the situation of the preceding Lemma, 
\begin{equation*}
\label{Abweichung_Bruch_uniform_empirisch}
\PP \left( \left\{\exists u \in \R:   \left| \phinneuu - \phinu \right|^2 >
\left( \frac{5}{2} \kappa\right)^2 \frac{ (\log n) w(u)^{-2} n^{-1}    }{|{\check{\phi}}_n(u)|^2 |\phi(u)|^2}    \right\}  \right) \leq C\kap n^{-p}. 
\end{equation*}
\end{corollary}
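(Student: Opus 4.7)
The plan is to revisit the case-by-case estimate that underlies the proof of Lemma~\ref{Abweichung_Bruch_emp_cf_uniform} and simply rewrite the two bounds obtained there in terms of $|\check{\phi}_n(u)|^2|\phi(u)|^2$ instead of $|\phi(u)|^4$ or $|\phi(u)|^2$. Since the event on which the preceding lemma fails is already shown to have probability at most $C\kap n^{-p}$, it is enough to work on its complement, the favourable event
\[
C \;:=\; \Big\{ \forall u\in\R: \ |\hat{\phi}_n(u)-\phi(u)| \le a_n(u)/2 \Big\},\qquad a_n(u) = \kappa (\log n)^{1/2} w(u)^{-1} n^{-1/2}.
\]

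On $C$ I would split according to whether $|\phi(u)|>3a_n(u)/2$ or not, exactly as in the preceding proof. In the first regime, $\check{\phi}_n(u)=\hat{\phi}_n(u)$ by the definition of the truncation, and then
\[
\left|\phinneuu-\phinu\right|^2 \;=\; \frac{|\hat{\phi}_n(u)-\phi(u)|^2}{|\check{\phi}_n(u)|^2|\phi(u)|^2}
\;\le\; \frac{(a_n(u)/2)^2}{|\check{\phi}_n(u)|^2|\phi(u)|^2}.
\]
In the second regime, the triangle inequality together with $|\check{\phi}_n(u)|\le|\phi(u)|+a_n(u)$ and the bound $|\phi(u)|\le 3a_n(u)/2$ gives $|\check{\phi}_n(u)-\phi(u)|\le (5/2)\,a_n(u)$, so that
\[
\left|\phinneuu-\phinu\right|^2 \;=\; \frac{|\check{\phi}_n(u)-\phi(u)|^2}{|\check{\phi}_n(u)|^2|\phi(u)|^2}
\;\le\; \frac{(5a_n(u)/2)^2}{|\check{\phi}_n(u)|^2|\phi(u)|^2}.
\]
Taking the larger of the two constants and substituting $a_n(u)^2 = \kappa^2 (\log n) w(u)^{-2}n^{-1}$ yields, in either regime,
\[
\left|\phinneuu-\phinu\right|^2 \;\le\; \left(\tfrac{5}{2}\kappa\right)^2 \frac{(\log n)\,w(u)^{-2}\,n^{-1}}{|\check{\phi}_n(u)|^2|\phi(u)|^2}.
\]

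Since this holds simultaneously for all $u\in\R$ on $C$, and since $\PP(C^c)\le C\kap n^{-p}$ by Lemma~\ref{Lemma_Abweichung_emp_cf_uniform} and the choice $\kappa\ge 2(\sqrt{pc_1}+\gamma)$, the claim follows. There is essentially no obstacle here beyond the bookkeeping: the only place where one has to be a bit careful is the second regime, where the bound $|\check{\phi}_n(u)|\ge a_n(u)$ (built into the definition of $\check{\phi}_n$) is what allows the factor $(5a_n/2)^2/(|\check{\phi}_n|^2|\phi|^2)$ to be interpreted correctly, and the constant $(5/2)^2$ (rather than the smaller $(1/2)^2$ coming from the first regime) is what survives into the final estimate.
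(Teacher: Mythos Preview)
Your proof is correct and is exactly the argument the paper has in mind: the paper states the corollary as an immediate consequence of Lemma~\ref{Abweichung_Bruch_emp_cf_uniform}, and indeed in that proof the identity $\left|1/\check{\phi}_n(u)-1/\phi(u)\right|^2 = |\check{\phi}_n(u)-\phi(u)|^2/(|\check{\phi}_n(u)|^2|\phi(u)|^2)$ is the starting point, so the corollary follows by stopping at the numerator bound $|\check{\phi}_n(u)-\phi(u)|\le (5/2)a_n(u)$ without further replacing $|\check{\phi}_n(u)|$. One small remark: your phrasing of the second-regime bound via ``$|\check{\phi}_n(u)|\le |\phi(u)|+a_n(u)$'' does not quite yield $(5/2)a_n(u)$ as written; the clean way (as in the paper) is the case split $|\check{\phi}_n(u)-\phi(u)|\le \max\{|\hat{\phi}_n(u)-\phi(u)|,\,a_n(u)+|\phi(u)|\}\le 5a_n(u)/2$, but this is only a cosmetic issue.
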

The uniform version of the classical Neumann Lemma can now be stated as an easy consequence of Lemma \ref{Abweichung_Bruch_emp_cf_uniform}.
\begin{proof}[\textbf{Proof of Lemma \ref{Neumann_Lemma_uniform} }]
We use the notation 
\[
A_n(u):= \frac{  \left|\phinneuu -\phinu  \right|^2  }{ \frac{(\log n) w(u)^{-2} n^{-1}}{|\phi(u)|^4 }  \wedge \frac{1}{|\phi(u)|^2}}   
\]

Let the set $C$ be defined as in the proof of Lemma  \ref{Abweichung_Bruch_emp_cf_uniform}.
We can decompose 
\begin{align}
\notag \phantom{\leq} &\,\,  \E \left[\sup_{u\in \R} A_n(u)   \right]  =\E \left[ \sup_{u\in \R}A_n(u) 1_C  \right]   + \E \left[\sup_{u\in \R} A_n(u) 1_{C^c}   \right]
\end{align}
The definition of $C$, together with Proposition \ref{Abweichung_Bruch_emp_cf_uniform}, readily implies 
\begin{equation*}
 \E \left[\sup_{u\in \R} A_n(u) 1_C \right] \leq  \E \left[\sup_{u\in \R}  \frac{    \left( 4\kappa \right)^2 \frac{(\log n) w(u)^{-1} n^{-1}}{|\phi(u)|^4} \wedge \left(\frac{5}{2}\right)^2
   \frac{1}{|\phi(u)|^2}              }{ \frac{(\log n) w(u)^{-2} n^{-1}}{|\phi(u)|^4 }  \wedge \frac{1}{|\phi(u)|^2}} 1_C                               \right]
 \leq 16\kappa^2.
\end{equation*}
On the other hand, since we have, by definition $ 1/\check{\phi}_n(u) \leq a_n(u)^{-1}$, we can estimate for arbitrary $u\in \R$: 
\begin{equation*}
 \label{NLU_4} A_n(u)
\leq  |\phi(u)|^4 
 \frac{\left(a_n(u)^{-1}+\frac{1}{|\phi(u)|} \right)^2 }{{ a_n(u)^2 }\wedge |\phi(u)|^2   } \leq (2 a_n(u)^{-2}  +1 )^2 
\leq   (2 \kappa^{-2} +1)^2 n^2,                    
\end{equation*}
so
\begin{eqnarray*}
\label{NLU_5}    \E \left[\sup_{u\in \R}A_n(u)  1_{C^c }   \right]
\leq (2\kappa^{-2}+1)^2 n^2 \PP \left( C^c  \right).
\end{eqnarray*}
Since  Lemma \ref{Lemma_Abweichung_emp_cf_uniform} implies
$  \PP( C^c) \leq C\kap n^{-2}$,  this gives the desired result. 
\end{proof}
The result immediately extends to values different from $2$. The following Corollary can be obtained, replacing in each step $2$ by $2q$:  
\begin{corollary}\label{Korollar_Neumann_uniform} In the situation of the preceding statement, let $\kappa\geq 2\left( \sqrt{2qc_1}+\gamma\right)$ for some $q\in \R^+$.  Then we have for 
some constant $C\neukap=C\neukap_q^{\gamma,\delta,\eps}$ depending on $\gamma$, $q, \delta$ and $\eps$:
 \begin{eqnarray}
\E \left[ \sup_{u\in \R}\frac{  \left|\phinneuu -\phinu  \right|^{2q}  }{ \frac{(\log n)^q w(u)^{-2q} n^{-q}}{|\phi(u)|^{4q} }
\wedge \frac{1}{|\phi(u)|^{2q}}}    \right]
\leq C\neukap.
\end{eqnarray}
\end{corollary}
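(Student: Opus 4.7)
The plan is to carry out the proof of Lemma \ref{Neumann_Lemma_uniform} verbatim, replacing the square by the $2q$-th power throughout. First I would introduce
\[
A_n^{(q)}(u) := \frac{\left|\phinneuu -\phinu\right|^{2q}}{\dfrac{(\log n)^q w(u)^{-2q} n^{-q}}{|\phi(u)|^{4q}} \wedge \dfrac{1}{|\phi(u)|^{2q}}}
\]
and decompose $\E[\sup_u A_n^{(q)}(u)]$ along the favourable set $C := \{\forall u\in\R:\, |\hat{\phi}_n(u)-\phi(u)|\leq a_n(u)/2\}$ and its complement $C^c$, with $a_n(u)=\kappa(\log n)^{1/2} w(u)^{-1} n^{-1/2}$, exactly as in the proof of Lemma \ref{Abweichung_Bruch_emp_cf_uniform}.

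On $C$, the pointwise bound supplied by Lemma \ref{Abweichung_Bruch_emp_cf_uniform} reads
\[
\left|\phinneuu -\phinu\right|^{2} \le \frac{(4\kappa)^{2}(\log n) w(u)^{-2} n^{-1}}{|\phi(u)|^{4}} \wedge \frac{(5/2)^2}{|\phi(u)|^{2}}.
\]
Raising to the $q$-th power and using $(a\wedge b)^q = a^q \wedge b^q$ for non-negative $a,b$, the resulting upper bound matches the shape of the denominator of $A_n^{(q)}(u)$, so on $C$ I obtain the deterministic pointwise inequality $A_n^{(q)}(u)\le (4\kappa)^{2q} \vee (5/2)^{2q}$, uniformly in $u$.

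On $C^c$, the fact that $|1/\check{\phi}_n(u)|\le 1/a_n(u)$ holds by construction, combined with $w(u)\le 1$ everywhere, yields a crude deterministic bound $A_n^{(q)}(u) \le (2\kappa^{-2}+1)^{2q}\,n^{2q}$, in complete analogy with the $q=1$ case. The strengthened hypothesis $\kappa\geq 2(\sqrt{2qc_1}+\gamma)$ is calibrated precisely so that, taking $\tau = \kappa/2$ in Lemma \ref{Lemma_Abweichung_emp_cf_uniform}, the exponent $(\tau-\gamma)^2/c_1$ is at least $2q$, hence $\PP(C^c) \le C\kap n^{-2q}$. This probability bound exactly absorbs the $n^{2q}$ blow-up on the bad set, so $\E\bigl[\sup_u A_n^{(q)}(u)\, 1_{C^c}\bigr]$ stays bounded by a constant.

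Putting the two contributions together gives the claim with $C\neukap = \bigl((4\kappa)^{2q}\vee (5/2)^{2q}\bigr) + C\kap(2\kappa^{-2}+1)^{2q}$, depending only on $q, \gamma, \delta, \eps$. No substantively new idea is needed: the only thing to keep track of is how the exponent in the deviation estimate $\PP(C^c)\lesssim n^{-p}$ must scale with the moment $2q$, and this is exactly what the strengthened condition on $\kappa$ is designed to deliver.
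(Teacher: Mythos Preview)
Your proposal is correct and is precisely the argument the paper has in mind: the paper's own ``proof'' consists of the single sentence that the corollary is obtained by replacing $2$ by $2q$ in each step of the proof of Lemma~\ref{Neumann_Lemma_uniform}, and you have carried this out explicitly, correctly tracking that the strengthened assumption $\kappa\geq 2(\sqrt{2qc_1}+\gamma)$ yields $\PP(C^c)\leq C\kap n^{-2q}$ via Lemma~\ref{Lemma_Abweichung_emp_cf_uniform} with $\tau=\kappa/2$, which is exactly what is needed to absorb the crude bound $n^{2q}$ on the bad set.
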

\subsubsection{Auxiliary results}
The main result of the present subsection is Proposition \ref{wichtigstes_Hilfsresultat}, which is then the most important technical tool for analysing the adaptive bandwidth selection. 

We use, in the prequel, the short notation 
\[
\Delta_{m,k}\Fourier \kf(u):= \Kerndifmk.
\]
\renewcommand{\Kerndifmk}{\Delta_{m,k}\Fourier \kf(u)}
\begin{lemma} \label{Lemma_Abweichung_Erweiterung} For $k,m\in \N$,  let
\begin{equation*}
 x_{{f_{m,k}}}^2:= \frac{1}{2\pi^2} \Bigg\{C_1 \int  \left|\Fourier  f(\text{-}u)\Kerndifmk\right|^2 \d u    \wedge C_2
  \left(  \int  \left|\Fourier  f(\text{-}u)\Kerndifmk \right| \d u    \right)^2 \Bigg\}.
\end{equation*}
Moreover,  let
\begin{equation*}
\lambda_{{f_{m,k}}}:= \sqrt{c_1} \log \left( \xfmk^2  (k-m)^2  \right).
\end{equation*}
For some $\gamma>0$, let $\kappa= 2( \sqrt{2 p c_1} +\gamma)$.  Then we have for some constant $C\kap$ depending on $\gamma,\delta$ and $\eps$:
\begin{eqnarray*}
&\phantom{\leq}& \PP \left(  \left\{\exists u \in \R:  |\hat{\phi}_n(u) - \phi(u)|
 \geq \left( \frac{\kappa}{2} (\log n)^{1/2}  +\lambda_{{f_{m,k}}}\right) w(u)^{-1}  n^{-1/2} \right\}  \right) \\
& \leq &  C\kap n^{-p}   \xfmk^{-2} (k-m)^{-2}.
\end{eqnarray*}
\begin{proof}
The proof runs  along the same lines as the proof of Lemma \ref{Lemma_Abweichung_emp_cf_uniform}, setting, this time
\[
\kappa_n:= \left( \frac{\kappa}{2} (\log n)^{1/2}   + \lambda_{{f_{m,k}}} \right) n^{-1/2}  - C\neurei n^{-1/2}.
\]
Using again continuity of the (empirical) characteristic function,  the Talagrand inequality and the choice of $\kappa$, we derive that for $C\kap$ chosen large enough,
\begin{eqnarray*}
\notag & \phantom{\leq} & \PP \left(  \left\{\exists u \in \R:  |\hat{\phi}_n(u) - \phi(u)|
 \geq \left( \frac{\kappa}{2} (\log n)^{1/2}  +\lambda_{{f_{m,k}}}\right) w(u)^{-1}  n^{-1/2} \right\}  \right)  \\
\notag &\leq & 2 \exp \left(-\frac{\left(\frac{\kappa}{2} (\log n)^{1/2} +\lambda_{{f_{m,k}} } - C\neurei \right)^2}{c_1} \right)  \vee\,  2 \exp \left(-\frac{n^{1/2}\left(\frac{\kappa}{2}(\log n)^{1/2} +\lambda_{{f_{m,k}}}-C\neurei   \right) }{c_2}       \right)\\
\notag &\leq & C\kap  \exp\left(- \frac{\left(  \kappa/2 -\gamma \right)^2}{c_1} (\log n) - \log \left( x_{{f_{m,k} }}^2 (k-m)^2 \right)   \right) \\
& = & C\kap n^{-p}  x_{{f_{m,k}}}^{-2} (k-m)^{-2}.
\end{eqnarray*}

\end{proof}
\end{lemma}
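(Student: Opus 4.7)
The plan is to follow the scheme of Lemma \ref{Lemma_Abweichung_emp_cf_uniform}, but with the deviation threshold enlarged from $\tau w(u)^{-1}\sqrt{\log n/n}$ to $(\tfrac{\kappa}{2}\sqrt{\log n}+\lambda_{f_{m,k}})w(u)^{-1}n^{-1/2}$. The point of the extra additive $\lambda_{f_{m,k}}$ is that the sub-Gaussian tail produced by Talagrand's inequality then picks up an additional multiplicative factor $x_{f_{m,k}}^{-2}(k-m)^{-2}$, which is exactly what is needed later to sum the deviation bound over the pairs $(m,k)\in\mathcal{M}^2$ in the adaptive procedure.

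By continuity of $\hat{\phi}_n-\phi$ and of $w^{-1}$ the supremum over $\R$ coincides with the supremum over $\Q$, which makes Talagrand's inequality applicable to the empirical process $u\mapsto w(u)(\hat{\phi}_n(u)-\phi(u))$. The summands $w(u)(e^{iuZ_1}-\phi(u))$ have sup-norm at most $2$ and variance at most $1$, and Theorem 4.1 of \citet{Neumann_Reiss} provides the expectation bound $\E[\sup_u|\hat{\phi}_n(u)-\phi(u)|w(u)]\leq C\neurei n^{-1/2}$. Setting the Talagrand gap to
\[
\kappa_n:=\Bigl(\tfrac{\kappa}{2}\sqrt{\log n}+\lambda_{f_{m,k}}-(1+\eps)C\neurei\Bigr)n^{-1/2},
\]
Lemma \ref{Talagrand-Ungleichung} gives a bound of the form $2\exp(-n(\kappa_n^2/c_1\wedge \kappa_n/c_2))$; for $n$ beyond some threshold (harmlessly absorbed into $C\kap$), the sub-Gaussian branch dominates, while the Bernstein branch is strictly smaller as soon as $\kappa_n\sqrt{n}$ grows.

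The remaining step is bookkeeping of the exponent. Writing $A=\tfrac{\kappa}{2}\sqrt{\log n}-(1+\eps)C\neurei$ and $B=\lambda_{f_{m,k}}=\sqrt{c_1}\log(x_{f_{m,k}}^2(k-m)^2)$ and expanding $(A+B)^2=A^2+2AB+B^2$, the term $A^2/c_1$ dominates $p\log n$ for large $n$ by the choice of $\kappa$ with sufficient slack over $\gamma$, whereas the cross term $2AB/c_1=(2A/\sqrt{c_1})\log(x_{f_{m,k}}^2(k-m)^2)$ contributes at least $\log(x_{f_{m,k}}^2(k-m)^2)$ once $A\geq\sqrt{c_1}/2$, which again holds for $n$ large. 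Combining the two pieces gives the target $n^{-p}x_{f_{m,k}}^{-2}(k-m)^{-2}$. The only delicate point is the sign of $B$: when $x_{f_{m,k}}^2(k-m)^2<1$ one has $B<0$ and the expansion above fails, but then the claimed upper bound already exceeds $1$ for $C\kap$ chosen large, so the statement is trivial in that regime. Matching the prefactor $\sqrt{c_1}$ in the definition of $\lambda_{f_{m,k}}$ with the Talagrand constant so that the exponent factors cleanly is the main technical obstacle; the rest is the same routine as in Lemma \ref{Lemma_Abweichung_emp_cf_uniform}.
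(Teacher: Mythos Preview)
Your proposal is correct and follows essentially the same route as the paper: apply Talagrand's inequality to the weighted process $u\mapsto w(u)(\hat\phi_n(u)-\phi(u))$ with the enlarged gap $\kappa_n$, then unpack the sub-Gaussian exponent to isolate the two factors $n^{-p}$ and $x_{f_{m,k}}^{-2}(k-m)^{-2}$. The paper passes directly from the Talagrand bound to the line $C\kap\exp\bigl(-\frac{(\kappa/2-\gamma)^2}{c_1}\log n-\log(x_{f_{m,k}}^2(k-m)^2)\bigr)$ without spelling out the quadratic expansion or the sign issue for $\lambda_{f_{m,k}}$; your write-up makes both of these explicit, which is a genuine improvement in clarity but not a different argument.
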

The above result implies the following extension of Corollary  \ref{Abweichung_Bruch_emp_cf_uniform}:
\begin{corollary}
\label{Lemma_Abweichung_emp_cf_uniform_Erweiterung}In the situation of the preceding statement, we have for some constant $C\kap$ depending on $\gamma$ and $\delta$:
\begin{eqnarray*}
&\phantom{\leq} & \PP \left( \left\{\exists u \in \R:  \left|\phinneuu - \phinu  \right|^2 >
\frac{ \left(\frac{5}{2}  \kappa   (\log n)  + \lambda_{{f_{m,k}}} \right)^2 w(u)^{-2} n^{-1}}{|\check{{\phi}}_n(u)|^2|\phi(u)|^2 }  \right\} \right) \\
&\leq & C\kap   n^{-p} \xfmk^{-2} (k-m)^{-2}.
\end{eqnarray*}
\end{corollary}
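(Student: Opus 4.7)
The plan is to mimic the proof of Corollary \ref{Korollar_Abweichung_Bruch_emp_cf_uniform}, but feeding in the stronger tail bound supplied by Lemma \ref{Lemma_Abweichung_Erweiterung} instead of the plain one from Lemma \ref{Lemma_Abweichung_emp_cf_uniform}. All of the algebraic work has already been done in the pointwise case-split that proves Lemma \ref{Abweichung_Bruch_emp_cf_uniform}; only the ``bad event'' on which the deviation of $\hat{\phi}_n$ from $\phi$ is too large needs to be replaced by a thinner one.

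Concretely, first introduce the favourable set
\[
C := \Bigl\{\forall u\in\R:\ |\hat{\phi}_n(u)-\phi(u)|\le \bigl(\tfrac{\kappa}{2}(\log n)^{1/2}+\lambda_{f_{m,k}}\bigr)w(u)^{-1}n^{-1/2}\Bigr\}.
\]
With the choice $\kappa=2(\sqrt{2pc_1}+\gamma)$, Lemma \ref{Lemma_Abweichung_Erweiterung} yields at once
\[
\PP(C^c)\le C\kap\, n^{-p}\,x_{f_{m,k}}^{-2}(k-m)^{-2},
\]
which is already the tail we want. On $C$, one then argues pointwise as in the proof of Lemma \ref{Abweichung_Bruch_emp_cf_uniform}: write $\tilde a_n(u):=(\tfrac{\kappa}{2}(\log n)^{1/2}+\lambda_{f_{m,k}})w(u)^{-1}n^{-1/2}$, keep the truncation threshold $a_n(u)=\kappa(\log n)^{1/2}w(u)^{-1}n^{-1/2}$ from the definition of $\check{\phi}_n$, and split into the two cases $|\phi(u)|>3\tilde a_n(u)$ and $|\phi(u)|\le 3\tilde a_n(u)$.

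In the first case, $|\hat{\phi}_n(u)|\ge |\phi(u)|-\tilde a_n(u)>2\tilde a_n(u)\ge a_n(u)$, so $\check{\phi}_n(u)=\hat{\phi}_n(u)$ and $|\check{\phi}_n(u)-\phi(u)|\le\tilde a_n(u)$. In the second case, the triangle inequality and the definition of $\check{\phi}_n$ give $|\check{\phi}_n(u)-\phi(u)|\le |\phi(u)|+\max(|\hat{\phi}_n(u)|,a_n(u))\le\tfrac{5}{2}\bigl(a_n(u)\vee\tilde a_n(u)\bigr)\le \bigl(\tfrac{5}{2}\kappa(\log n)^{1/2}+\lambda_{f_{m,k}}\bigr)w(u)^{-1}n^{-1/2}$. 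In both cases the numerator $|\check{\phi}_n(u)-\phi(u)|^2$ is dominated by $\bigl(\tfrac{5}{2}\kappa(\log n)^{1/2}+\lambda_{f_{m,k}}\bigr)^2 w(u)^{-2}n^{-1}$; dividing by $|\check{\phi}_n(u)|^2|\phi(u)|^2$ gives exactly the right-hand side of the statement. Combining this deterministic bound on $C$ with the above estimate on $\PP(C^c)$ finishes the proof.

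The only genuinely delicate point is the bookkeeping of constants in the second case: one has to ensure that the additive term $\lambda_{f_{m,k}}$ passes through both branches of the max without degrading the factor $5/2$ in front of $\kappa(\log n)^{1/2}$, and that the truncation floor $a_n(u)$ (which does not contain $\lambda_{f_{m,k}}$) remains a valid lower bound on $|\check{\phi}_n(u)|$ so that the denominator $|\check{\phi}_n(u)|^2|\phi(u)|^2$ in the target bound is correct. Everything else is a verbatim repetition of arguments already laid out for Lemma \ref{Abweichung_Bruch_emp_cf_uniform} and Corollary \ref{Korollar_Abweichung_Bruch_emp_cf_uniform}.
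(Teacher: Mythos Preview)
Your approach is exactly the one indicated by the paper: the paper's (commented-out) proof says to derive the statement from Lemma \ref{Lemma_Abweichung_Erweiterung} in the same way that Corollary \ref{Korollar_Abweichung_Bruch_emp_cf_uniform} is derived from Lemma \ref{Lemma_Abweichung_emp_cf_uniform} and the proof of Lemma \ref{Abweichung_Bruch_emp_cf_uniform}, replacing $\tfrac{\kappa}{2}(\log n)^{1/2}w(u)^{-1}$ by $\bigl(\tfrac{\kappa}{2}(\log n)^{1/2}+\lambda_{f_{m,k}}\bigr)w(u)^{-1}$ throughout --- precisely the substitution you carry out via the favourable set $C$ and the case split on $|\phi(u)|\lessgtr 3\tilde a_n(u)$.

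One small remark on the bookkeeping you flagged yourself: in Case 2 the sharper route is to bound $|\check{\phi}_n(u)-\phi(u)|$ by $\max\{|\hat{\phi}_n(u)-\phi(u)|,\,|\phi(u)|+a_n(u)\}$ (as in the proof of Lemma \ref{Abweichung_Bruch_emp_cf_uniform}) rather than by $|\phi(u)|+\max(|\hat{\phi}_n(u)|,a_n(u))$; this yields $a_n(u)+3\tilde a_n(u)=\bigl(\tfrac{5}{2}\kappa(\log n)^{1/2}+3\lambda_{f_{m,k}}\bigr)w(u)^{-1}n^{-1/2}$, so the multiplicative constant in front of $\lambda_{f_{m,k}}$ is a harmless $3$ rather than $1$. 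Your intermediate claim $\le\tfrac{5}{2}(a_n(u)\vee\tilde a_n(u))$ is slightly too optimistic arithmetically, but this only affects an absolute constant that is absorbed into $C\kap$ (and into the constant $c^{\pen}$ downstream), not the structure of the argument.
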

\begin{proposition}
\label{wichtigstes_Hilfsresultat} Assume that the conditions which are summarized in Theorem
\ref{Hauptsatz_adaptiver_Schaetzer_Levy} are satisfied.
 Then we can estimate for arbitrary $m\in \mathbb{N}$:
\begin{eqnarray*}
\label{Ungleichung_Levy_wichtigstes_Hilfsresultat}
\E \left[  {\sup_{k>m,k\in \N}}  \left\{ \left| (\thetaneu_k-\thetaneu_m) -(\theta_{k} -
\theta_{m} )  \right|^2  -\frac{1}{2}\widetilde{\strafH}(m,k)^2\right\}_+  \right]\leq C n^{-1},
\end{eqnarray*}
where $C$ is a positive constant depending on the exponential moment. 
 \end{proposition}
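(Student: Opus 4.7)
The plan is to use the Parseval representation
\[
(\hat\theta_k-\hat\theta_m)-(\theta_k-\theta_m) = \frac{1}{2\pi i}\int \Fourier f(-u)\,\Kerndifmk\left(\frac{\hat\phi'_n(u)}{\check\phi_n(u)}-\frac{\phi'(u)}{\phi(u)}\right)\d u,
\]
together with the crucial independence between $\hat\phi'_n$ (depending on $Z_{\Delta,n+1},\ldots,Z_{\Delta,2n}$) and $\check\phi_n$ (depending on the independent first-half sample). I would split the bracketed expression as $(\hat\phi'_n-\phi')/\check\phi_n + \phi'\bigl(\phinneu-\phinu\bigr)$; this decomposes the fluctuation into a ``centered'' piece $(I)$ and a ``denominator-fluctuation'' piece $(II)$, to be controlled respectively by the $c^{\pen}c_1\tilde\lambda_{m,k}^2$-part and the $16(5\kappa/2)^2(\log n)$-part of $\widetilde{\strafH}^2(m,k)$.

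\medskip

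For $(II)$, use $\phi'(u)=\Psi'(u)\phi(u)$ and $\|\Psi'\|_\infty<\infty$ (from the definition of $C_2$ under (A4)). Corollary~\ref{Korollar_Abweichung_Bruch_emp_cf_uniform} gives, on a high-probability event, the pointwise estimate $|\phi(u)|\,|\phinneuu-\phinu|\leq \tfrac{5\kappa}{2}(\log n)^{1/2}w(u)^{-1}n^{-1/2}/|\check\phi_n(u)|$. Inserting this inside the integral (choosing the $L^2$ or $L^1$ branch realising the minimum in $\tilde\sigma_{m,k}^2$, noting that the extra $|\check\phi_n(u)|$ in the denominator turns $|\Fourier f|$ into the $|\Fourier f/\check\phi_n|$ appearing in $\tilde\sigma_{m,k}$ and $\tilde x_{m,k}$) yields $|(II)\text{-contribution}|^2 \leq C\,(5\kappa/2)^2(\log n)(\tilde\sigma_{m,k}^2\vee\tilde x_{m,k}^2)/n$. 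Lemma~\ref{Neumann_Lemma_uniform} and Corollary~\ref{Korollar_Neumann_uniform} handle the expectation over the exceptional event.

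\medskip

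For $(I)$, conditional on the first-half sample this term is $n^{-1}\sum_{j=n+1}^{2n}\xi^{(m,k)}_j$ for i.i.d.\ centered random variables $\xi^{(m,k)}_j$ of conditional variance at most $\tilde\sigma_{m,k}^2$ and conditional sup-norm at most $\sqrt{n}\,\tilde x_{m,k}\cdot B_n$ with $B_n=O(\log n)$ (the truncation being justified by $\E[\exp(\eta|X_1|)]<\infty$). For each fixed $k>m$, Talagrand's inequality (Lemma~\ref{Talagrand-Ungleichung}) gives concentration at deviation level calibrated by $\sqrt{c_1\,\tilde\lambda_{m,k}^2(\tilde\sigma_{m,k}^2\vee\tilde x_{m,k}^2)/n}$; the logarithmic weights $\tilde\lambda_{m,k}$ (with their $\log(\tilde\sigma_{m,k}^2(k-m)^2)$ and $\log(n\,\tilde x_{m,k}(k-m))$ factors) are designed so that $\sum_{k>m}\exp(-\tilde\lambda_{m,k}^2/c_1)<\infty$, which lets one sum the tails uniformly in $k$. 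Integrating then yields an $O(n^{-1})$ bound on the expected supremum of the positive part.

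\medskip

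The main obstacle is that $\widetilde{\strafH}^2(m,k)$ is itself stochastic (through $\check\phi_n$), while Talagrand naturally produces a concentration bound calibrated to deterministic variance and sup-norm. I would bridge this by working on a high-probability event $\Omega_0$ on which $|1/\check\phi_n(u)|$ and $|1/\phi(u)|$ are comparable uniformly in $u$; Lemma~\ref{Abweichung_Bruch_emp_cf_uniform} and Corollary~\ref{Lemma_Abweichung_emp_cf_uniform_Erweiterung}, with the choice $\kappa=2(\sqrt{4c_1}+\gamma)$, yield $\PP(\Omega_0^c)\leq Cn^{-p}$ for $p$ as large as wanted (by enlarging $\gamma$). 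On $\Omega_0$ the stochastic penalty dominates its deterministic counterpart up to a universal constant, so the preceding concentration argument applies verbatim. Off $\Omega_0$, the crude bound $|1/\check\phi_n(u)|\leq\kappa^{-1}(\log n)^{-1/2}w(u)\,n^{1/2}$ built into the definition of $\check\phi_n$, combined with $\E[\exp(\eta|X_1|)]<\infty$, yields a polynomial-in-$n$ worst-case bound on $|\hat\theta_k-\hat\theta_m|^2$; multiplying by $\PP(\Omega_0^c)\leq Cn^{-p}$ absorbs this into a residual of order $n^{-1}$. Combining the contributions of $(I)$ and $(II)$ gives the claim.
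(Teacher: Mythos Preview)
Your outline matches the paper's proof: the same decomposition into a numerator-fluctuation piece $(I)$, handled by conditioning on the first-half sample, truncating $Z_j$ at level $\frac{4}{\eta}\log(n\tilde x_{m,k}(k-m))$, applying a concentration inequality (the paper uses Bernstein rather than Talagrand, but for a single index $k$ this is immaterial) and absorbing the truncation remainder via the exponential moment; and a denominator-fluctuation piece $(II)$, handled via a favourable event on which $|(II)|^2\le\frac{1}{8}\widetilde{\strafH}^2(m,k)$.

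Two points deserve sharpening. First, your ``main obstacle'' is not one for $(I)$: once you condition on $\hat\phi_n$, the quantities $\tilde\sigma_{m,k}^2$, $\tilde x_{m,k}$ and $\tilde\lambda_{m,k}$ are measurable, so Bernstein applies directly with the stochastic penalty and no comparison between $1/\check\phi_n$ and $1/\phi$ is needed there. Second, for $(II)$ on the exceptional set your single-event argument has a gap. The crude worst-case bound on $|(\tilde\theta_k-\tilde\theta_m)-(\theta_k-\theta_m)|^2$ is of order $n\cdot x_{f_{m,k}}^2$ with $x_{f_{m,k}}^2\asymp\int|\Fourier f(-u)\,\Delta_{m,k}\Fourier\kf(u)|^2\d u$, and this is \emph{not} uniform in $k$ in general (e.g.\ if $f$ is a derivative of a point mass and $\Fourier\kf$ is compactly supported, it grows polynomially in $k$), so ``polynomial in $n$ times $\PP(\Omega_0^c)$'' does not control the supremum over all $k\in\N$. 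The paper instead uses the $k$-indexed family of events $C(m,k)$ supplied by Corollary~\ref{Lemma_Abweichung_emp_cf_uniform_Erweiterung}, whose complements carry the self-normalising factor $x_{f_{m,k}}^{-2}(k-m)^{-2}$; this yields $\sum_{k>m}n\,x_{f_{m,k}}^2\,\PP(C(m,k)^c)=O(n^{-1})$ regardless of how $x_{f_{m,k}}$ grows with $k$.
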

\begin{proof}Let 
\begin{equation*}
 \tilde{\theta}_m:= \frac{1}{2\pi}  \int \Fourier f(-u) \frac{\phi'(u)}{\check{\phi}_n(u) } \Fourier \kf
\left(u/m\right) \d u.
\end{equation*}
We use the estimate
 \begin{eqnarray}
 \notag & \phantom{\leq}&    \E \left[
{\sup_{k>m, k\in \N}} \left\{ \left|(\hat{{\theta}}_k- \hat{{\theta}}_{m}) - (\theta_{k} - \theta_{m})   \right|^2 -
\frac{1}{2} \tilde{\strafH}^2(m,k)\right\}_+   \right]  \\
 \notag  &\leq & 2 \E \left[{ \sup_{k>m,k\in\N} } \left\{ \left|(\hat{{\theta}}_k- \hat{{\theta}}_{m}) -
(\tilde{\theta}_{k} - \tilde{\theta}_{m})   \right|^2 -
\frac{1}{8}\tilde{\strafH}^2(m,k)\right\}_+  \right] \\
\label{E_sup_Zeile_2}&+&   2 \E \left[{\sup_{k >  m, k\in \N} } \left\{ \left|(\tilde{\theta}_k- \tilde{\theta}_{m}) -
(\theta_{k} - \theta_{m})   \right|^2 - \frac{1}{8} \tilde{\strafH}^2(m,k)   \right\}_+  \right].
 \end{eqnarray}
 Consider first the expression appearing in the second line of formula  (\ref{E_sup_Zeile_2}). We can estimate, conditioning on $\hat{\phi}_n$:
 \begin{eqnarray*}
\notag &\phantom{\leq} &  \E \left[{ \sup_{k>m,k\in\N} }  \left\{ \left|(\hat{{\theta}}_k- \hat{{\theta}}_{m}) -
(\tilde{\theta}_{k} - \tilde{\theta}_{m})   \right|^2 - \frac{1}{8} \tilde{\strafH}^2(m,k)   \right\}_+  \right]  \\
\notag & = & \E\left[ \E \left[{\sup_{k>m,k\in\N}}\left\{ \left|(\hat{{\theta}}_k- \hat{{\theta}}_{m}) -
(\tilde{\theta}_{k} - \tilde{\theta}_{m})   \right|^2 - \frac{1}{8} \tilde{\strafH}^2(m,k)   \right\}_+ \Bigg|\hat{\phi}_n\right]  \right] \\
&\leq & \E \left[ {\sum_{k>m,k\in\N}\limits }\E \left[  \left\{ \left|(\hat{{\theta}}_k- \hat{{\theta}}_{m}) -
(\tilde{\theta}_{k} - \tilde{\theta}_{m})   \right|^2 - \frac{1}{8} \tilde{\strafH}^2(m,k)   \right\}_+ \Bigg| \hat{\phi}_n  \right]
 \right].\phantom{mllllll}
\end{eqnarray*}
Unlike in a classical density deconvolution model, where characteristic functions are being considered, we have to deal, in the present situation,  with the additional complication that $\hat{\phi}_n$ is unbounded. To be able to apply the  Bernstein inequality, let us introduce the truncated version of $Z_j$, 
\begin{equation*}
\label{Abgeschnitten}
\bar{Z}_j:= Z_j 1_{ \left\{|Z_j|\leq    \frac{4}{\eta}   \left(
 \log n + \log \tilde{x}_{m,k } (k-m)  \right) \right\} }.
 \end{equation*}
Moreover, we define the remainder term
$Z_j^r := Z_j - \bar{Z}_j$. Then 
\renewcommand{\Kerndifmk}{\Delta_{m,k} \Fourier \kf(u) }
\begin{eqnarray*}
\notag \label{Abschneiden_1}&\phantom{\leq}& \left|(\hat{{\theta}}_k - \hat{{\theta}}_m )- (\tilde{\theta}_k - \tilde{\theta}_m)  \right|^2  \\
\notag \label{Abschneiden_2}& = & \left|\frac{1}{n} \sum_{j=1}^n\limits \frac{1}{2\pi}  \int \Fourier f(-u)
\frac{ (Z_j e^{iuZ_j} - \E [Z_1 e^{iuZ_1} ] )}{{\check{\phi}}_n(u) }  \Kerndifmk  \d u  \right|^2 \\
\notag \label{Abschneiden_3} &\leq & 2  \left|\frac{1}{2\pi} \frac{1}{n} \sum_{j=1}^n \int \Fourier f(-u)
 \frac{\bar{Z}_j e^{iuZ_j} - \E [\bar{Z}_1 e^{iuZ_1}]}{{\check{\phi}}_n(u)}  \Kerndifmk\d u     \right|^2   \\
\notag \label{Abschneiden_4}&+&  2 \left| \frac{1}{2\pi} \frac{1}{n} \sum_{j=1}^n \int \Fourier f(-u)
 \frac{Z^r_j e^{iuZ_j^r} - \E [Z^r_1 e^{iuZ^r_1}]}{{\check{\phi}}_n(u)} \Kerndifmk \d u  \right|^2 \\
\label{Abschneiden_5}&=:&2 \left| \left( \hat{\bar{\theta}}_{k} -\hat{\bar{\theta}}_m  \right)
-\left(  \tilde{\bar{\theta}}_k -\tilde{\bar{\theta}}_m \right) \right|^2
+ 2 \left| \left( \hat{\theta}_k^r - \hat{\theta}_m^r \right) - \left(  \tilde{\theta}^r_k - \tilde{\theta}^r_m \right) \right|^2.
\end{eqnarray*}
Since
\begin{eqnarray*}
 \E \left[  \left| \left( \hat{\bar{\theta}}_{k} -\hat{\bar{\theta}}_m  \right)
-\left(  \tilde{\bar{\theta}}_k -\tilde{\bar{\theta}}_m \right) \right|^2  \Big| \hat{\phi}_n \right]
 \leq \frac{1}{n}\tilde{\sigma}_{m,k}^2  \ a.s.
\end{eqnarray*}
and 
\begin{eqnarray}
\notag &\phantom{\leq} & \left\| \frac{1}{2\pi}\int  \Fourier f(-u)
\frac{\bar{Z}_1 e^{iuZ_1} }{{\check{\phi}}_n(u)}    \Kerndifmk  \d u  \right\|_{\infty} \\
\notag & \leq &  \frac{4}{\eta} (\log (n \tilde{x}_{m,k} (k-m)))\frac{1}{2\pi} \int  \left| \frac{\Fourier f(\text{-}u)}{{\check{\phi}}_n(u)}\right| \left|\Kerndifmk   \right|   \d u  \\
\notag &\leq &    \frac{4\sqrt{n}}{\eta} (\log (n  \tilde{x}_{m,k} (k-m) ))  \tilde{x}_{m,k}  \   \    \  a.s,
\end{eqnarray}
the integral version of the classical Bernstein-inequality (see, for example, \citet{Dudley}) yields 
\begin{eqnarray}
\notag &\phantom{\leq} & \E \left[\left\{ \left| \left( \hat{\bar{\theta}}_{k} -\hat{\bar{\theta}}_m  \right)
-\left(  \tilde{\bar{\theta}}_k -\tilde{\bar{\theta}}_m \right) \right|^2 -\frac{1}{8}\tilde{\strafH}^2(m,k)\right\}_+ \Big|\hat{\phi}_n    \right]  \\
\notag  &\leq &     \frac{2048\sqrt{2}}{\eta^2} \frac{ (\log \left(n  \tilde{x}_{m,k}(k-m)\right) )^2 \tilde{x}_{m,k}^2 }{n}   \exp \left(-\frac{c^{\pen}  \tilde{\lambda}_{m,k}  \tilde{x}_{m,k}}
 { \frac{256}{\eta}\left(\log \left(n \tilde{x}_{m,k}(k-m)\right)\right) \tilde{x}_{m,k} } \right)\\
\notag &\phantom{=}&+ 32 \frac{\tilde{\sigma}_{m,k}^2}{n}
 \exp\left(- \frac{ c^{\pen} \tilde{\lambda}_{m,k}^2  \tilde{\sigma}_{m,k}^2}{ 64 \tilde{\sigma}_{m,k}^2  }  \right)  \  \  a.s.
\end{eqnarray}
It is important to recall at this point  
that $\hat{\phi}'_n$ and $\hat{\phi}_n$ are independent by construction.

Using  the fact that, by definition,  $c^{\pen}\geq 64$ and
\mbox{$\tilde{\lambda}_{m,k}^2 \geq \log \left(\tilde{\sigma}_{m,k}^2 (k-m)^2 \right)$}, as well as
\[
 \tilde{\lambda}_{m,k} \geq  \frac{8}{\eta} \left(\log \left(n  \tilde{x}_{m,k} (k-m) \right)\right)
\log \left( \log \left(n  \tilde{x}_{m,k} (k-m) \right)\right)^2
\log \left(\tilde{x}_{m,k}^2 (k-m)^2 \right),
\] 
we can continue by estimating
\begin{align}
\notag \phantom{\leq}& \,  \,32 \frac{\tilde{\sigma}_{m,k}^2}{n}
 \exp\left(- \frac{ c^{\pen} \tilde{\lambda}_{m,k}^2  \tilde{\sigma}_{m,k}^2}{ 64 \tilde{\sigma}_{m,k}^2  }  \right) \\
\notag &  + \frac{2048\sqrt{2}}{\eta^2} \frac{ (\log \left(n  \tilde{x}_{m,k}(k-m)\right) )^2 \tilde{x}_{m,k}^2 }{n}  \exp\hspace*{-0,07cm}
 \left(\hspace*{-0,07cm}-\frac{c^{\pen}  \tilde{\lambda}_{m,k}  \tilde{x}_{m,k}}
 {\frac{256}{\eta}\left(\log \left(n \tilde{x}_{m,k}(k-m)\right)\right) \tilde{x}_{m,k} } \right) \\
\notag \leq& \,  \, 32 \frac{\tilde{\sigma}_{m,k}^2}{n}
\exp\left(-  \tilde{\lambda}_{m,k}^2    \right) \\
\notag &+ \frac{2048\sqrt{2}}{\eta^2} \frac{ (\log \left(n  \tilde{x}_{m,k}(k-m)\right) )^2 \tilde{x}_{m,k}^2 }{n}
 \exp \left(- \frac{\tilde{\lambda}_{m,k} }
 {\frac{4}{\eta}  \log \left( n \tilde{x}_{m,k}(k-m)\right)  } \right) \\
\notag \leq& \,\,   32 \frac{\tilde{\sigma}_{m,k}^2}{n}  \tilde{\sigma}_{m,k}^{-2} (k-m)^{-2} \\
\notag  &+   \frac{2048\sqrt{2}}{\eta^2} \frac{ (\log \left(n  \tilde{x}_{m,k}(k-m)\right) )^2 \tilde{x}_{m,k}^2 }{n}
 (\log \left( n \tilde{x}_{m,k} (k-m)\right) )^{-2}  \tilde{x}_{m,k}^{-2}  (k- m)^{-2}.
\end{align}
We have thus shown that, almost surely,
\begin{eqnarray*}
\label{Abschaetzung_Abgeschnittenes_Levy}  \underset{k\in \mathbb{N}}{\sum_{k\geq m} }\E\left[ \left\{ \left| \left( \hat{\bar{\theta}}_{k} -\hat{\bar{\theta}}_m  \right)
-\left(  \tilde{\bar{\theta}}_k -\tilde{\bar{\theta}}_m \right) \right|^2 -\frac{1}{8}\tilde{\strafH}^2(m,k)      \right\}_+
\big| \hat{\phi}_n\right]
\leq  2 (64 +  \frac{2048\sqrt{2}}{\eta^2}  ) n^{-1} .
\end{eqnarray*}

The remainder term can be estimated as follows:
\begin{eqnarray}
\notag &\phantom{|}& \E \left[  \left| \left( \hat{\theta}_k^r- \hat{\theta}_m^r\right)-
\left( \tilde{\theta}_k^r- \tilde{\theta}_m^r\right)   \right|^2\Bigg|
\hat{\phi}_n \right]   \\
\notag &\leq&   \frac{1}{n}   \E \left[ \left| \frac{1}{2\pi} \int \Fourier f(\text{-}u)
 \frac{Z^r_1 e^{iuZ_1}}{{\check{\phi}}_n(u)}
 \Kerndifmk   \d u  \right|^2 \Bigg|\hat{\phi}_n \right]  \\
\notag   &  \leq  & \frac{1}{n} \frac{1}{(2\pi)^2}   \E \left[|Z_1^r|^2 \big|  \hat{\phi}_n \right]
  \left( \int \left|\frac{\Fourier f(\text{-}u)}{\check{{\phi}}_n(u)  }  \right|
   \left|\Kerndifmk  \right|\d u\right)^2 =  \tilde{x}_{m,k}^2   \E \left[ |Z_1^r|^2 \big| \hat{\phi}_n\right]\   \  a.s.
\end{eqnarray}
Now, we apply Markov's inequality to find 
\begin{eqnarray}
\notag  \E \left[ |Z_1^r|^2 \big| \hat{\phi}_n\right]&=&  \E \left[\left|Z_1\right|^2 1 \left(\{|Z_1| > \frac{4}{\eta}  \log\left( n  x_{m,k}(k-m)  \right)   \}\right)   \big| \hat{\phi}_n\right]  \\
\notag & \leq & \E \left[ |Z_1|^2 \exp \left( \frac{\eta}{2}|Z_1|\right) \bigg| \hat{\phi}_n\right]
\exp\left(-2 \log\left(n \tilde{x}_{m,k} (k-m)\right) \right)\\
\notag &\leq &  n^{-1} \frac{4}{\eta^2}\E \left[\exp\left(\eta |Z_1|\right)  \right]  \tilde{x}_{m,k}^{-2} (k-m)^{-2} \   \   a.s.
\end{eqnarray}
where we have used the notation $1(A)$ instead of $1_A$. 
We have thus shown 
\begin{equation*}
\label{Abschaetzung_Rest_Levy}
 \sum_{k\geq m,k\in \mathbb{N}} \E\left[\left| \left( \hat{\theta}_k^r- \hat{\theta}_m^r\right)
-\left( \tilde{\theta}_k^r-\tilde{\theta}_m^r \right)\right|^2   \Big|\hat{\phi}_n       \right] 
 \leq \frac{8}{\eta^2} \E \left[\exp \left( \eta|Z_1|\right) \right] n^{-1} \  \  
  a.s.
\end{equation*}
It remains to consider the expression appearing in the last line of formula  (\ref{E_sup_Zeile_2}).

Let us introduce, for arbitrary  $m\leq k$, the favourable set
\begin{equation*}
 C(m,k)   :=   \left\{ \forall u\in \R: \left|\phinneuu  -\phinu\right|^2 \leq
 \frac{   \left( \left(\frac{5}{2} \kappa\right)\hspace*{-0,08cm}(\log n)^{1/2}  +  \lambda_{{f_{m,k}}} \right)^2 \hspace*{-0,08cm} w(u)^{-1} }
{|{\check{\phi}}_n(u)|^2| |\phi(u)|^2 n}
    \right\},
\end{equation*}
with $\lambda_{{f_{m,k}}}$ defined as in Lemma \ref{Lemma_Abweichung_Erweiterung}. 
We can estimate
\begin{eqnarray}
\notag &&\hspace*{-1,7cm}\phantom{\leq} \E\left[{\sup_{k>m,k\in \N} } \left\{ \left|\left( \tilde{\theta}_k -\tilde{\theta}_m\right) -
\left(\theta_{k} -\theta_{m}  \right)   \right|^2 -\frac{1}{8}\tilde{\strafH}^2(m,k) \right\}_{+}  \right]  \\
\notag &&\hspace*{-1,7cm}\leq  \E\left[\sup_{k>m,k\in \N}  \left\{ \left|\left( \tilde{\theta}_k -\tilde{\theta}_m\right) -
\left(\theta_{k} -\theta_{m}  \right)   \right|^2 -\frac{1}{8}\tilde{\strafH}^2(m,k) \right\}_{+} 1\Big( C(m,k)
\Big)  \right] \\
\notag &&\hspace{-1,7cm}  + \E\left[\sup_{k>m,k\in \N} \left\{ \left|\left( \tilde{\theta}_k -\tilde{\theta}_m\right) -
\left(\theta_{k} -\theta_{m}  \right)   \right|^2 -\frac{1}{8}\tilde{\strafH}^2(m,k) \right\}_{+} 1\Big( C(m,k)^c
\Big)  \right]\hspace*{-0,15cm}.
\end{eqnarray}
The Cauchy Schwarz inequality and the fact that $|\phi(u)|\leq 1$  imply
\begin{align}
\notag  & \phantom{\leq}   \left|\left( \tilde{\theta}_k -\tilde{\theta}_m\right) -
\left(\theta_{k} -\theta_{m}  \right)   \right|^2  \\
\notag &= \left| \frac{1}{2\pi} \int
\Fourier f(-u) \phi' (u) \left( \frac{1}{ { \check{\phi}}_n(u) } -\frac{1}{\phi(u)} \right) \Kerndifmk   \d u  \right|^2 \\
\notag &=  \left| \frac{1}{2\pi} \int
\Fourier f(-u)  \Psi'(u)
\left(\phinneuu - \phinu    \right) \phi(u)  \Kerndifmk  \d u  \right|^2  \\
\notag &\leq  \frac{1}{(2\pi)^2} \Bigg\{ \|\Psi'\|_{\infty}^2
\left( \int
\left|\Fourier f(-u)\right|  \left| \phinneuu - \phinu  \right| |\phi(u)| \left|\Kerndifmk  \right|\d u \right)^2 \\
\notag  &\phantom{\leq tt}\wedge \Bigg(\| \Psi'\|_{\lk^2}^2 \d x  \int
 \left|\Fourier f(-u) \right|^2  \left| \phinneuu -\phinu \right|^2\hspace*{-0,08cm} |\phi(u)|^2\hspace*{-0,06cm}
\left| \Kerndifmk \right|^2 \d u \Bigg) \Bigg\}. 
\end{align}
The definition of $C(m,k)$ readily implies that on this set, 
\begin{eqnarray*}
\notag &&\hspace*{-0,5cm}\phantom{\leq} \frac{1}{(2\pi)^2} \Bigg\{
\left( \|\Psi'\|_{\infty} \int
\left|\Fourier f(-u)\right|  \left| \phinneuu - \phinu  \right| |\phi(u)| \left|\Kerndifmk  \right|\d u \right)^2 \\
\notag &&\wedge \Bigg(\|\Psi'\|_{\lk^2}^2   \int
 \left|\Fourier f(-u) \right|^2  \left| \phinneuu -\phinu \right|^2\hspace*{-0,08cm} |\phi(u)|^2\hspace*{-0,06cm}
\left| \Kerndifmk \right|^2 \d u \Bigg) \Bigg\} \\
&&\hspace*{-0,5cm}\leq    \left( \left(\frac{5}{2}  \kappa \right)   (\log n)^{1/2}    +  \lambda_{{f_{m,k}}} \right)^2
n^{-1} \tilde{\sigma}^2_{m,k}.
\end{eqnarray*}
We use the trivial  observation  that we always have \mbox{$x_{{f_{m,k}}}^2 \leq \tilde{\sigma}^2_{m,k}$} and hence
\mbox{$\lambda_{{f_{m,k}}}^2 \leq c_1 \tilde{\lambda}_{m,k}^2$} as well as the fact that, by definition,
\begin{equation*}
\tilde{\strafH}^2(m,k) \geq 8 \left( \frac{5}{2}\kappa \left(\log n\right)^{1/2} +\sqrt{c_1} \tilde{\lambda}_{m,k} \right)^2
n^{-1} \tilde{\sigma}_{m,k}^2 ,
\end{equation*}
to conclude that the last line is smaller
than $\frac{1}{8}\tilde{\strafH}^2(m,k)$. We have thus shown 
\begin{equation*}
 \label{E_sup_veranchlaessigbar_2}\hspace*{-0,3cm}\E \left[{\sup_{k> m,k\in \N}}
\left\{ \left|\left( \tilde{\theta}_k- \tilde{\theta}_m\right) - (\theta_k - \theta_m)  \right|^2
- \frac{1}{8}\tilde{\strafH}^2(m,k) \right\}_+ 1(C(m,k))  \right] = 0.
\end{equation*}
It remains to show that the remainder term is negligible. The definition of $\frac{1}{{\check{\phi}}_n}$ implies
that we always have
$\frac{1}{|{\check{\phi}}_n|^2} \leq \kappa^{-2} (\log n)^{-1} n$.
We can thus  estimate
\begin{eqnarray*}
\notag &\phantom{\leq}&\left|\left( \tilde{\theta}_k -\tilde{\theta}_m\right) -
\left(\theta_{k} -\theta_{m}  \right)   \right|^2  \\
\notag &{\leq} &  \frac{1}{2\pi^2}  \Bigg\{ C_1
 \int
\left| \Fourier f(-u)\right|^2  \left| \phinneuu  -\phinu   \right|^2  |\phi(u)|^2 \left|\Kerndifmk  \right|^2 \d u  \\
\notag &\phantom{\leq} & \phantom{\frac{1}{2\pi^2}  \Bigg\{  } \wedge  C_2\left(\int
\left| \Fourier f(-u)\right|  \left| \phinneuu -\phinu  \right| |\phi(u)|  \left| \Kerndifmk \right|\d u \right)^2 \Bigg\}   \\
&\leq &  \frac{1}{2\pi^2} \left(\kappa^{-1} (\log n)^{-1/2} n^{1/2}+1\right)^2   x_{{f_{m,k}}}^2,
\end{eqnarray*}
with $x_{{f_{m,k}}}$ defined as in Lemma \ref{Lemma_Abweichung_Erweiterung}.
This implies
\begin{eqnarray}
\label{re_re_re}
\notag &\phantom{\leq } &  \E\left[{\sup_{k>m, k\in \N} } \left\{ \left|\left( \tilde{\theta}_k -\tilde{\theta}_m\right) -
\left(\theta_{k} -\theta_{m}  \right)   \right|^2 -\frac{1}{8}\tilde{\strafH}^2(m,k) \right\}_{+} 1\Big( C(m,k)^c
\Big)  \right]  \\
\notag &\leq & {\sum_{k>m,k\in \N}\limits}  \E
\left[   \left\{
\left|\left( \tilde{\theta}_k -\tilde{\theta}_m\right) -
\left(\theta_{k} -\theta_{m}  \right)   \right|^2
-\frac{1}{8}\tilde{\strafH}^2(m,k)
\right\}_{+}
1\Big( C(m,k)^c \Big)
\right]  \\
&\leq & {\sum_{k>m,k\in \N}\limits} \frac{1}{2\pi^2} \left(\kappa^{-1} (\log n)^{-1/2} n^{1/2} +1\right)^2   {x}_{{f_{m,k}}}^2  \PP  \Big( C(m,k)^c\Big).
\end{eqnarray}
Now, Lemma \ref{Lemma_Abweichung_emp_cf_uniform_Erweiterung} and the choice of $\kappa$ imply 
\begin{equation*}
\PP  \Big( C(m,k)^c\Big) =C\kap n^{-2}  x_{{f_{m,k}}}^{-2} (m-k)^{-2} ,
\end{equation*}
so the sum appearing in the last line of formula (\ref{re_re_re}) is readily negligible.

This completes the proof.\end{proof}
\subsubsection{Proof of Theorem \ref{Hauptsatz_adaptiver_Schaetzer_Levy} }In what follows, let $m^*$ denote the oracle cutoff, 
\begin{equation*}
 m^*: =   \argmin_{m\in \mathcal{M}}\limits  \left\{ {\sup_{k\geq m, k \in \mathcal{M}} }|\theta_k - \theta_m|^2
+\pen(m)   \right\}.
\end{equation*}
We start by considering the loss on the set $\{\hat{m}\leq m^*\}$.  We use the estimate
\begin{equation*}
 \label{hatm_klein_1}
 \left|\theta - \hat{\theta}_{\hat{m}} \right|^2 1\left( \left\{\hat{m}\leq m^*\right\}\right) 
 \leq  2\left|\theta - \hat{\theta}_{m^*}\right|^2  1\left( \{\hat{m}\leq m^*\} \right) +
2 \left| \hat{\theta}_{m^*} -\hat{\theta}_{\hat{m}} \right|^2 1\left( \left\{ \hat{m}\leq m^*\right\}\right).
\end{equation*}
Lemma \ref{Neumann_Lemma_uniform} and the definition of the penalty term imply  
\begin{equation*}
\label{hatm_klein_2}
 \E \left[ |\theta  -\thetaneu_{m^*}|^2 \right] \leq 2 |\theta - \theta_{m^*}|^2 +
 2 \E\left[ \left|\theta_{m^*}-\hat{\theta}_{m^*}\right|^2\right] 
\leq  2|\theta - \theta_{m^*}|^2 + 2 C\neukap \pen (m^*).
\end{equation*}
By definition of $\hat{m}$,
\begin{eqnarray}
\label{hatm_klein_3}
\notag &&\hspace*{-1,6cm}\phantom{\leq}   \left| \thetaneu_{m^*} - \thetaneu_{\hat{m}}\right|^2 1 \left(\{\hat{m}\leq m^*\} \right)  \\
\notag &&\hspace*{-1,6cm}\leq  \tilde{\pen}(m^*) +
{\sup_{k> m^*,k\in \mathcal{M}}}  \left\{ \left|\thetaneu_k - \thetaneu_{m^*}\right|^2 \hspace*{-0,1cm}- \tilde{\strafH}^2(m^*,k) \right\}
 +\tilde{\strafH}^2(\hat{m},m^*) 1\left( \{\hat{m}\leq m^*\}\right)\hspace*{-0,05cm}.
\end{eqnarray}
 We can estimate
\begin{eqnarray}
\label{hatm_klein_4}
\notag & \phantom{\leq}  & \sup_{{k>m^*,k\in \mathcal{M}}} \left\{ \left|\hat{{\theta}}_k -\hat{{\theta}}_{m^*}\right|^2
-\widetilde{\strafH}^2(m^*,k)
 \right\}  \\
\notag & \leq  & \sup_{k>m^*,k\in \mathcal{M}}  \left\{ 2 \left|(\hat{{\theta}}_k -\hat{{\theta} }_{m^*})-(\theta_{k} - \theta_{{m^*}}) \right|^2
 -\widetilde{\strafH}^2(m^*,k)  \right\} +    2 \sup_{k>m^*,k\in \mathcal{M}} \left|\theta_{k} - \theta_{ {m^*}}  \right|^2.
\end{eqnarray}
Proposition \ref{wichtigstes_Hilfsresultat}  readily  implies for some positive constant $C$, 
\begin{eqnarray}
\notag \label{hatm_klein_5}
\E \left[ \sup_{{k>m^*,k\in \mathcal{M}}} \left\{ 2 \left|(\hat{{\theta}}_k -\hat{{\theta} }_{m^*})-(\theta_{k} - \theta_{{m^*}}) \right|^2
-\widetilde{\strafH}^2(m^*,k)  \right\}  \right]\leq  C n^{-1} .
\end{eqnarray}
Next, we observe that, by definition of $\tilde{\strafH}^2$ and $\tilde{\pen}$,
\begin{equation*}
 \tilde{\strafH}(\hat{m},m^*) 1\left(\left\{\hat{m}\leq m^*\right\}\right) \leq \tilde{\pen}(m^*).
\end{equation*}
Since we have chosen $\kappa\geq 2\left( \sqrt{4c_1}+\gamma\right)$, we can  apply Corollary \ref{Korollar_Neumann_uniform} to find that for some positive constant
$C\neukap$ depending only on the choice of the constants, 
\begin{equation}
 \label{Strafe_tot}
\E \left[\tilde{\pen}(m^*)   \right] \leq C\neukap  \pen(m^*).
\end{equation}
To do this, we apply the Cauchy-Schwarz inequality to see that 
\begin{equation*}
\E\left[ \tilde{\lambda}_{m^*}^2 \tilde{\sigma}^2\right]  \leq \left(\E\left[ \tilde{\lambda}_{m^*}^4 \right] \right)^{1/2}  \left(\E\left[ \tilde{\sigma}_{m^*}^4 \right]   \right)^{1/2} 
\end{equation*}
and then use  Corollary \ref{Korollar_Neumann_uniform} to derive that 
\begin{equation*}
 \left(\E\left[ \tilde{\lambda}_{m^*}^4 \right] \right)^{1/2}  \left(\E\left[ \tilde{\sigma}_{m^*}^4 \right]   \right)^{1/2}  \leq C\neukap  \lambda_{m^*}^2\sigma_{m^*}^2.
\end{equation*}

Putting the above results together, we have shown that for some positive constant $C\neukap$ depending only on the choice of the constants and some positive constant  $C$ specified in 
in Proposition \ref{wichtigstes_Hilfsresultat}, 
\begin{eqnarray}
\notag  \label{hatm_klein_5}
&\phantom{\leq}& \E \left[ \left|\theta- \hat{\theta}_{\hat{m}}\right|^2 1\left(\left\{\hat{m}\leq m^* \right\} \right)  \right]  \\
\notag &\leq & \max\{10, 8  C\neukap \}   \inf_{m\in \mathcal{M}} \limits \left\{ |\theta - \theta_{{m_n}}|^2 +
{\sup_{k\geq m,k\in \mathcal{M}}\limits} |\theta_k - \theta_m|^2 +\pen(m)    \right\} +  Cn^{-1} .
\end{eqnarray}
This is the desired result for the expected loss on $\{\hat{m}\leq m^*\}$.

It remains to consider the loss on the set $\{\hat{m} > m^*\}$.  We use the estimate
\begin{equation*}
\label{erste_Zerlegung_cutoff_gross}
 |\theta - \hat{\theta}_{\hat{m}} |^2
\leq    3 |\theta- \theta_{m^*}|^2 +3 |\theta_{\hat{m}}-\theta_{m*} |^2  +3 |\theta_{\hat{m}}-\hat{\theta}_{\hat{m}}|^2.
\end{equation*}
First, we clearly have
\begin{equation*}
\label{zweite_Zerlegung_cutoff_gross}
3 \left(  |\theta - {\theta}_{m^*} |^2 +|\theta_{\hat{m}}-\theta_{m^*} |^2       \right)
1\left( \left\{  \hat{m}>m^* \right\} \right) 
\leq   6|\theta - \theta_{m_n}|^2 + 6 {\sup_{k>m*,k\in \mathcal{M}}\limits }|\theta_k - \theta_{m*}|^2.
\end{equation*}
Next, we can estimate
\begin{eqnarray}
\notag &\phantom{=}&   |{\theta}_{\hat{m}}-\hat{\theta}_{\hat{m}} |^2 1 \left( \left\{ \hat{m}>m^*\right\} \right)   =
{\sum_{k>m^*,k\in \mathcal{M}}\limits }  |\theta_k -\hat{\theta}_k |^2  1\left( \left\{\hat{m}=k \right\}\right)\\
\notag &\leq & {\sum_{k<m^*,k\in \mathcal{M}}\limits } \left\{ |\theta_k -\hat{\theta}_k |^2 - \tilde{\pen}(k) \right\}_+
+{\sum_{k>m^*,k\in \mathcal{M}} \limits }  \tilde{\pen}(k) 1 \left( \left\{\hat{m}=k\right\} \right).
\end{eqnarray}
Another application of   Proposition \ref{wichtigstes_Hilfsresultat} gives 
\begin{equation*}
 \underset{k\in \mathcal{M}}{\sum_{k>m^*}\limits}\E \left[ \left\{|\theta_k-\hat{\theta}_k|^2-
\tilde{\pen}(k) \right\}_+   \right]\leq C n^{-1}.
\end{equation*}
Moreover, by definition of $\hat{m}$, we have on $\{\hat{m}=k \}$:
\begin{align}
\notag  & \phantom{\leq}\,\, \tilde{\pen}(k) \,\, \leq \,\,  \tilde{\pen}(m^*)   +\sup_{l>m^*,l\in \mathcal{M}}
\left\{|\hat{\theta}_l -\hat{\theta}_{m^*} |^2  -\tilde{\strafH}^2(m^*,l) \right\}_+   \\
\notag  &\leq\,\,  \tilde{\pen}(m^*)  +  2 \underset{l\in \mathcal{M}}{\sup_{l>m^*}\limits}  \left\{
\left| (\hat{\theta}_l -\hat{\theta}_m^*)- (\theta_l -\theta_{m^*} )\right|^2  -\frac{1}{2}\tilde{\strafH}^2(m^*,l) \right\}_+
+  2\underset{l\in \mathcal{M}}{\sup_{l>m^*} } |\theta_l -\theta_{m^*}|^2.
\end{align}
By Proposition \ref{wichtigstes_Hilfsresultat} and formula (\ref{Strafe_tot}), 
\begin{eqnarray*}
&\phantom{\leq}&  \E \left[\tilde{\pen}(m^*)  +  2{\sup_{l>m^*,l\in \mathcal{M}}\limits}  \left\{
\left| (\hat{\theta}_l -\hat{\theta}_m^*)- (\theta_l -\theta_{m^*} )\right|^2  -\frac{1}{2}\tilde{\strafH}^2(m^*,l) \right\}_+ \right]  \\
&  \leq &  C\neukap  \pen(m^*) + C n^{-1}.
\end{eqnarray*}
Putting the above results together, we have shown
\begin{eqnarray}
\notag &\phantom{\leq} & \E \left[  |\theta - \hat{\theta}_{\hat{m}}|^2 1 \left(\left\{\hat{m}>m^*\right\}\right)      \right]  \\
\notag &\leq&  \max\{11, 3 C\neukap\}  \inf_{m\in \mathcal{M}}\limits \left\{ |\theta- \theta_{{m_n}}|^2 +
{\sup_{k>m^*,k\in \mathcal{M}}\limits} |\theta_k -\theta_{m^*}|^2  +\pen(m) \right\} +  C n^{-1} ,
\end{eqnarray}
which is the desired result for $\{\hat{m}>m^*\}$. This completes the proof.  \hfill $\Box$
\nocite{Cai_Low_2}
\nocite{Orey}
\nocite{Klein_Rio}
\nocite{Tsybakov}
\nocite{Katznelson}
\nocite{Goldenshluger_1}
\nocite{Goldenshluger}
\bibliographystyle{plainnat}
\bibliography{literatur}
\end{document}